%% file: main.tex
\documentclass[reqno,10pt]{article}

\usepackage{graphicx}
\usepackage{amsmath}
\usepackage{amssymb}
\usepackage{amsthm}
\usepackage{enumitem}
\usepackage{url}
\usepackage[colorlinks=true,linkcolor=blue,citecolor=blue,urlcolor=blue]{hyperref}

\newtheorem{theorem}{Theorem}
\newtheorem{lemma}[theorem]{Lemma}
\newtheorem{proposition}[theorem]{Proposition}
\newtheorem{corollary}[theorem]{Corollary}

\theoremstyle{definition}
\newtheorem{definition}[theorem]{Definition}
\newtheorem{remark}[theorem]{Remark}
\newtheorem{example}[theorem]{Example}

\numberwithin{equation}{section}
\numberwithin{theorem}{section}

\newenvironment{OMabstract}{\noindent\textbf{Abstract.} }{\medskip}
\newenvironment{OMsubjclass}{\noindent\textbf{Mathematics Subject Classification (2020):} }{\medskip}
\newenvironment{OMkeywords}{\noindent\textbf{Keywords:} }{\medskip}
\newcommand{\ldc}{\operatorname{ldc}}
\newcommand{\diam}{\operatorname{diam}}
\newcommand{\ecc}{\varepsilon}
\newcommand{\med}{\operatorname{med}}
\newcommand{\Ball}[2]{N^{#2}[#1]}
\newcommand{\Tkh}{T^{k,h}}
\newcommand{\Ntwo}{M}

\begin{document}

\author{Fei-Huang Chang, Ma-Lian Chia, David Kuo, and Guan-Ting Lai}
\title{List-distance consistent vertices in trees are confined to a path}
\date{}
\maketitle

\hrule\smallskip
\begin{center}
Submission for publication in \textsc{Opuscula Mathematica}
\end{center}
\smallskip\hrule\bigskip

\input{abstract}
\input{introduction}
\input{preliminaries}
\input{path-structure}
\input{kary}
\input{spiders}
\input{conclusion}
\section*{Acknowledgements}

The research of D. Kuo was supported in part by the National Science and
Technology Council of Taiwan under grant NSTC 115-2115-M-259-002, and
that of F.-H. Chang under grant NSTC 115-2115-M-259-003.
\input{ai-declaration}

\input{bibliography}
\bigskip
\noindent Fei-Huang Chang, David Kuo, and Guan-Ting Lai\\
Department of Applied Mathematics, National Dong Hwa University,\\
Ma-Lian Chia (corresponding author) \\
Academy of Preparatory Programs for Overseas Compatriot Students,
  National Taiwan Normal University,
New Taipei City 244014, Taiwan\\
\texttt{mlchia@ntnu.edu.tw}

\end{document}

%% file: abstract.tex
\begin{OMabstract}
A \emph{labeling} of a connected graph $G$ on $n$ vertices is a bijection
$c:V(G)\to\{1,\dots,n\}$; writing $c(u,v)=|c(u)-c(v)|$, a vertex $u$ is
\emph{list-distance consistent} if $d(u,v)<d(u,w)$ implies $c(u,v)\le c(u,w)$
for all $v,w$. The maximum number of such vertices over all labelings is the
\emph{list-distance consistency} $\ldc(G)$, introduced by Casselgren and
Henricsson. We prove that in a tree, the consistent vertices of any labeling lie on a single path, along which the labels form a block of consecutive integers in increasing order (with respect to a suitable orientation of the path), no vertex off the path receiving a label from that block. We deduce that $\ldc$ equals $3$ for every complete $k$-ary tree
except the binary tree of height two, and we determine $\ldc$ for all spiders.
\end{OMabstract}

\begin{OMkeywords}
graph labeling, distance labeling, list graph, tree, spider.
\end{OMkeywords}

\begin{OMsubjclass}
05C78, 05C05, 05C85.
\end{OMsubjclass}

%% file: introduction.tex
\section{Introduction}
\label{sec:intro}

All graphs considered here are finite, simple and connected. For a graph $G$,
write $n=|V(G)|$ and let $d(u,v)$ denote the distance between $u$ and $v$. A
\emph{labeling} of $G$ is a bijection $c:V(G)\to[1,n]$, and we abbreviate
$c(u,v)=|c(u)-c(v)|$.

Lennerstad and Eriksson \cite{LE2018} called $G$ a \emph{list graph} if it
admits a labeling such that
\begin{equation}
 d(u_1,v_1)<d(u_2,v_2)\ \Longrightarrow\
 c(u_1,v_1)\le c(u_2,v_2)
 \label{eq:listcond}
\end{equation}
for all $u_1,v_1,u_2,v_2\in V(G)$. They proved that list graphs are
path-hamiltonian, chordal and claw-free; Casselgren and Henricsson \cite{CH2025}
subsequently characterized them as unions of cliques of two consecutive sizes
arranged along a Hamiltonian path. For other graph-labeling problems, see
\cite{Gallian}; distance-labeling schemes, which address a different question,
are surveyed in \cite{GPR}.

We study the local relaxation of \eqref{eq:listcond} introduced in
\cite{CH2025}.

\begin{definition}
\label{def:ldc}
Let $c$ be a labeling of $G$. A vertex $u\in V(G)$ is
\emph{list-distance consistent}, or simply \emph{consistent}, under $c$ if
\begin{equation}
 d(u,v)<d(u,w)\ \Longrightarrow\ c(u,v)\le c(u,w)
 \qquad (v,w\in V(G)).
 \label{eq:ldc}
\end{equation}
Let $S_c(G)$ be the set of consistent vertices under $c$, and define
\[
 \ldc(G)=\max\{\,|S_c(G)|:c\text{ is a labeling of }G\,\}.
\]
A labeling attaining the maximum is \emph{optimal}.
\end{definition}

Casselgren and Henricsson used an apparently weaker definition involving only
consecutive label distances; Proposition~\ref{prop:equiv} shows that it is
equivalent to Definition~\ref{def:ldc}. Thus $\ldc(G)=n$ precisely when $G$ is
a local list graph in their terminology. They determined $\ldc$ for several
standard families and proved that every value between $1$ and $n$ occurs on
some $n$-vertex graph, whereas $\ldc(G)=1$ asymptotically almost surely for
$G\in\mathcal G(n,1/2)$. This led them to ask which graph families are well
described by the model.

Our main result gives a strong answer for trees. For any labeling of a tree,
not merely an optimal one, Theorem~\ref{thm:path} confines all consistent
vertices to a path $P$. The labels on $P$ are consecutive and increasing, and
no vertex outside $P$ receives a label from that interval. Two consequences
make this structure effective: Corollary~\ref{cor:ballgrowth} bounds the growth
of balls around the extremal consistent vertices, while
Corollary~\ref{cor:branch} forces every branch to occur within distance one of
an end of $P$. We apply these results to two contrasting families. Complete
$k$-ary trees branch throughout; their list-distance consistency is $3$, with
the single exception $T^{2,2}$, for which it is $5$. Spiders branch only once;
Theorem~\ref{thm:spidersummary} determines their list-distance consistency
completely, including a finite description of the exceptional parameter
ranges in which the regular value increases by one.

Section~\ref{sec:prelim} develops the ball criterion used throughout.
Section~\ref{sec:path} proves the path structure theorem and its two
consequences. Sections~\ref{sec:kary} and \ref{sec:spiders} treat complete
$k$-ary trees and spiders, respectively, and Section~\ref{sec:conclusion}
records directions for further work. Part of this paper is based on the
master's thesis \cite{Lai2025} of Guan-Ting Lai.

%% file: preliminaries.tex
\section{Preliminaries}
\label{sec:prelim}

For $u\in V(G)$ and an integer $k\ge0$ we write
\[
  \Ball{u}{k}=\{\,x\in V(G) : d(u,x)\le k\,\}
\]
for the closed ball of radius $k$ about $u$. For $A\subseteq V(G)$, write
$c(A)=\{c(x):x\in A\}$. For vertices $u,v$ of a tree $T$, let $P_{uv}$ denote
the unique $u$--$v$ path. Finally, $[i,j]=\{i,i+1,\dots,j\}$.

\subsection{Two equivalent definitions}

In \cite{CH2025} a vertex $u$ is declared consistent when
\begin{equation}
  c(u,w)=c(u,v)+1 \ \Longrightarrow\ d(u,v)\le d(u,w)
  \qquad\text{for all } v,w\in V(G).
  \label{eq:ch}
\end{equation}
Condition \eqref{eq:ch} is formally weaker than \eqref{eq:ldc}, since it
constrains only pairs whose label distances are consecutive.  The two are in
fact equivalent, and we record this once and for all so that results from
\cite{CH2025} may be quoted without further comment.

\begin{proposition}
\label{prop:equiv}
Let $c$ be a labeling of a connected graph $G$ with $n\ge 2$ vertices and let
$u\in V(G)$.  Then $u$ satisfies \eqref{eq:ldc} if and only if $u$ satisfies
\eqref{eq:ch}.
\end{proposition}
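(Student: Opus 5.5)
Both conditions concern only the multiset of label distances $c(u,x)$ seen from the fixed vertex $u$, so I would rephrase them in terms of the function $x\mapsto c(u,x)$ and the distances $d(u,x)$. Condition \eqref{eq:ldc} says $d(u,v)<d(u,w)\Rightarrow c(u,v)\le c(u,w)$; contrapositively, $c(u,v)>c(u,w)\Rightarrow d(u,v)\ge d(u,w)$. In other words, $d(u,\cdot)$ is weakly monotone (non-decreasing) as a function of the label distance $c(u,\cdot)$, in the sense that larger label distance never comes with strictly smaller graph distance. Condition \eqref{eq:ch} asserts the same comparison, but only for pairs whose label distances differ by exactly one. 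The implication \eqref{eq:ldc}$\Rightarrow$\eqref{eq:ch} is then immediate: if $c(u,w)=c(u,v)+1$ then $c(u,w)>c(u,v)$, so the contrapositive form of \eqref{eq:ldc} gives $d(u,w)\ge d(u,v)$.

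For the converse, assume \eqref{eq:ch} and suppose $v,w$ satisfy $c(u,v)>c(u,w)$. I want to show $d(u,v)\ge d(u,w)$ by chaining consecutive label distances. Let $a=c(u,w)$ and $b=c(u,v)$, and consider every integer $t$ with $a\le t\le b$. The key observation is that each such $t$ is realized as $c(u,x_t)$ for some vertex $x_t$. Indeed, since $c$ is a bijection onto $[1,n]$, the label distances from $u$ realized by vertices are exactly $\{|j-c(u)|:j\in[1,n]\}$. This set is $[0,\max(c(u)-1,n-c(u))]$, an interval of integers containing $0$. Since $a$ and $b$ are realized, so is every integer between them. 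Choose $x_a=w$ and $x_b=v$, and pick $x_t$ arbitrarily for the intermediate values. Applying \eqref{eq:ch} to each consecutive pair $(x_t,x_{t+1})$ gives $d(u,x_t)\le d(u,x_{t+1})$, and chaining these inequalities yields $d(u,w)\le d(u,v)$. This is the contrapositive of \eqref{eq:ldc}.

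The only step needing care is the interval claim for realized label distances, which is where both the bijectivity of $c$ and the target set being the full interval $[1,n]$ are used. A second subtlety is that \eqref{eq:ch} must hold for every choice of $v,w$ at those label distances. This is harmless: whenever two vertices share a label distance, namely $c(u)\pm t$, we may pick either one as $x_t$, and the chain argument works for any choice. The hypothesis $n\ge2$ is not essential but guarantees a nontrivial range of distances. I expect no real obstacle beyond stating this interval fact cleanly.
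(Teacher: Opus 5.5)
Your proof is correct and is essentially the paper's argument. The forward direction is immediate. For the converse, you chain \eqref{eq:ch} through vertices $x_t$ that realize each intermediate label distance; these exist because bijectivity of $c$ onto $[1,n]$ makes the set of label distances from $u$ an interval of integers. The only cosmetic difference is that you argue via the contrapositive of \eqref{eq:ldc} and include distance $0$ in that interval, whereas the paper argues by contradiction using the range $[1,M]$.
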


\begin{proof}
If \eqref{eq:ldc} holds and $c(u,w)=c(u,v)+1$, the inequality
$d(u,w)<d(u,v)$ would contradict \eqref{eq:ldc}; hence \eqref{eq:ch} holds.

Conversely, assume \eqref{eq:ch}, put $a=c(u)$ and
$M=\max\{a-1,n-a\}$. Since $c$ is a bijection onto $[1,n]$,
\[
  \{\,c(u,x) : x\in V(G)\setminus\{u\}\,\}=[1,M].
\]
If $d(u,v)<d(u,w)$ but $c(u,w)<c(u,v)$, put $t=c(u,w)$ and
$t'=c(u,v)$. Then $1\le t<t'\le M$, and we may choose vertices
$x_t,x_{t+1},\dots,x_{t'}$ with $c(u,x_j)=j$ for every $j$, and we may take
$x_t=w$ and $x_{t'}=v$.  Applying \eqref{eq:ch} to the pair $(x_j,x_{j+1})$ for
$j=t,\dots,t'-1$ gives $d(u,x_j)\le d(u,x_{j+1})$, whence
\[
  d(u,w)=d(u,x_t)\le d(u,x_{t'})=d(u,v),
\]
contradicting $d(u,v)<d(u,w)$.
\end{proof}

\subsection{A characterisation by balls}

The following reformulation of \eqref{eq:ldc} is the tool we use most often.
We adopt the convention $\min\varnothing=+\infty$, so that the condition below is
vacuous once $k\ge\ecc(u)$.

\begin{lemma}
\label{lem:ball}
Let $c$ be a labeling of a connected graph $G$ and let $u\in V(G)$. Then $u$ is
consistent under $c$ if and only if
\begin{equation}
  \max\bigl\{\,c(u,x) : x\in \Ball{u}{k}\,\bigr\}
  \;\le\;
  \min\bigl\{\,c(u,y) : y\in V(G)\setminus \Ball{u}{k}\,\bigr\}
  \label{eq:ballcond}
\end{equation}
for every integer $k\ge0$.
\end{lemma}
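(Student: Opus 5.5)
We prove the two implications directly from Definition~\ref{def:ldc}; nothing beyond unwinding the quantifiers is needed, but we must handle the degenerate cases of \eqref{eq:ballcond} with care.

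For the forward direction, assume $u$ is consistent and fix $k\ge0$. If $V(G)\setminus\Ball{u}{k}$ is empty, the right-hand side of \eqref{eq:ballcond} is $+\infty$ by our convention, so there is nothing to prove. Otherwise take $x\in\Ball{u}{k}$ and $y\notin\Ball{u}{k}$. Then $d(u,x)\le k<d(u,y)$, so \eqref{eq:ldc} gives $c(u,x)\le c(u,y)$. The ball $\Ball{u}{k}$ is nonempty because it contains $u$, so both extrema are attained over nonempty finite sets. Taking the maximum over $x$ and the minimum over $y$ then yields \eqref{eq:ballcond}.

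For the converse, assume \eqref{eq:ballcond} holds for every $k\ge0$, and let $v,w$ satisfy $d(u,v)<d(u,w)$. Set $k=d(u,v)$, which is a nonnegative integer since $G$ is connected and distances are finite. Then $v\in\Ball{u}{k}$ and $w\notin\Ball{u}{k}$, so
\[
 c(u,v)\le\max\{c(u,x):x\in\Ball{u}{k}\}\le\min\{c(u,y):y\notin\Ball{u}{k}\}\le c(u,w),
\]
which is exactly \eqref{eq:ldc}.

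The only point needing attention is the choice $k=d(u,v)$ in the converse, which requires finite distances, and hence connectivity. We should also remark that for $k\ge\ecc(u)$ the condition is vacuous, so the family of conditions in \eqref{eq:ballcond} is effectively finite. We expect no genuine obstacle.
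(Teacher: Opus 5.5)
Your proof is correct and matches the paper's argument: the forward direction compares an arbitrary $x\in\Ball{u}{k}$ with an arbitrary $y\notin\Ball{u}{k}$ and then passes to the maximum and minimum, and the converse takes $k=d(u,v)$. Your explicit treatment of the empty complement via $\min\varnothing=+\infty$ spells out a point the paper leaves implicit; it does not change the argument.
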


\begin{proof}
Suppose $u$ is consistent and let $k\ge0$, $x\in\Ball{u}{k}$ and
$y\notin\Ball{u}{k}$. Then $d(u,x)\le k<d(u,y)$, so $c(u,x)\le c(u,y)$ by
\eqref{eq:ldc}; taking the maximum over $x$ and the minimum over $y$ gives
\eqref{eq:ballcond}.

Conversely, assume \eqref{eq:ballcond} for all $k$ and let $v,w$ satisfy
$d(u,v)<d(u,w)$. Put $k=d(u,v)$. Then $v\in\Ball{u}{k}$ and
$w\notin\Ball{u}{k}$, so $c(u,v)\le c(u,w)$ by \eqref{eq:ballcond}.
\end{proof}

The necessary condition isolated in \cite[Lemma 2.2]{CH2025} is an immediate
consequence.

\begin{corollary}[\cite{CH2025}]
\label{cor:interval}
If $u$ is consistent under $c$, then $c(\Ball{u}{k})$ is a set of consecutive
integers for every $k\ge0$.
\end{corollary}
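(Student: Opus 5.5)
We need to show that if $u$ is consistent, then $c(\Ball{u}{k})$ is an interval of consecutive integers for every $k$. The plan is to argue directly from Lemma~\ref{lem:ball}. Put $a=c(u)$ and fix $k\ge0$. If $\Ball{u}{k}=V(G)$ there is nothing to prove, since $c(V(G))=[1,n]$. So assume there is a vertex outside the ball, and let
\[
m=\max\{c(u,x):x\in\Ball{u}{k}\},\qquad m'=\min\{c(u,y):y\notin\Ball{u}{k}\}.
\]
Condition \eqref{eq:ballcond} gives $m\le m'$.

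The key step is to show that every label $b$ with $|b-a|<m$ lies in $c(\Ball{u}{k})$. The label $b=a$ is $c(u)$ itself. For $b\ne a$ in $[1,n]$ with $|b-a|<m$, the vertex $z$ with $c(z)=b$ satisfies $c(u,z)<m\le m'$. Hence $z$ cannot lie outside the ball, so $z\in\Ball{u}{k}$. Consequently $c(\Ball{u}{k})$ contains the whole window $[a-m+1,a+m-1]\cap[1,n]$.

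It remains to handle labels at distance exactly $m$ from $a$. By the definition of $m$, the ball contains a vertex with label $a+m$ or $a-m$, and no label of the ball is at distance more than $m$ from $a$. Therefore
\[
c(\Ball{u}{k})=\bigl([a-m+1,a+m-1]\cap[1,n]\bigr)\cup E,
\]
where $E$ is a nonempty subset of $\{a-m,a+m\}\cap[1,n]$. Each element of $E$ is adjacent to an endpoint of the window, so whichever one or two of these labels occur, the union is a set of consecutive integers. This gives the claim.

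The only point needing care is that equality $m=m'$ is permitted in \eqref{eq:ballcond}. This is exactly why the labels at distance $m$ from $a$ can be split between the inside and the outside of the ball. The argument above avoids the problem by not requiring both $a\pm m$ to lie in the ball; one of them is enough to keep the set consecutive. Neither case requires the ball to be symmetric about $a$.
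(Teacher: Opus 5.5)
Your proof is correct and rests on the same observation as the paper's: by Lemma~\ref{lem:ball}, any label strictly closer to $c(u)$ than some label already in $c(\Ball{u}{k})$ must itself lie in $c(\Ball{u}{k})$. The paper applies this to a hypothetical gap and the extreme label on the same side of $c(u)$, whereas you apply it with the overall maximum $m$ and get a sharper description of $c(\Ball{u}{k})$, namely the truncated window $[c(u)-m+1,c(u)+m-1]\cap[1,n]$ together with one or both of $c(u)\pm m$; this description is in fact equivalent to \eqref{eq:ballcond} at radius $k$, and it is exactly what Example~\ref{ex:notsufficient} violates. (The phrase ``adjacent to an endpoint of the window'' does not apply when $m=0$, i.e.\ $k=0$, since the window is then empty, but in that case the set is just $\{c(u)\}$ and the claim is trivial.)
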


\begin{proof}
Put $a=c(u)$ and $A=c(\Ball{u}{k})$. If $j\notin A$ lies strictly between
$\min A$ and $\max A$, then $y=c^{-1}(j)\notin\Ball{u}{k}$. For $j>a$, choose
$x\in\Ball{u}{k}$ with $c(x)=\max A$; then $c(u,x)>c(u,y)$, contrary to
\eqref{eq:ballcond}. The case $j<a$ is symmetric.
\end{proof}

We also record the value of $\ldc$ on paths, which is used in
Example~\ref{ex:notsufficient} below and again in Section~\ref{sec:kary}.

\begin{proposition}
\label{prop:path}
For every $n\ge1$ we have $\ldc(P_n)=n$.
\end{proposition}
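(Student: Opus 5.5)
We exhibit an explicit labeling under which every vertex is consistent. Write the path as $P_n=v_1v_2\cdots v_n$ and take the natural labeling $c(v_i)=i$. For $n=1$ the statement is trivial, since $\ldc(P_1)\le 1$ always holds and the unique vertex is vacuously consistent. For $n\ge2$ the upper bound $\ldc(P_n)\le n$ is immediate from the definition, so it suffices to show that $S_c(P_n)=V(P_n)$.

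Fix $u=v_i$ and let $x,y$ be arbitrary vertices with $x=v_j$ and $y=v_\ell$. In a path we have $d(v_i,v_j)=|i-j|$, and under this labeling $c(v_i,v_j)=|i-j|$ as well. Hence distance and label distance from $u$ coincide for every vertex:
\[
  d(u,x)=c(u,x)\qquad\text{for all } x\in V(P_n).
\]

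Now suppose $d(u,v)<d(u,w)$. Then $c(u,v)=d(u,v)<d(u,w)=c(u,w)$, so \eqref{eq:ldc} holds at $u$. Equivalently, we may check the criterion of Lemma~\ref{lem:ball}. For each $k\ge0$, every $x\in\Ball{u}{k}$ satisfies $c(u,x)\le k$, while every $y\notin\Ball{u}{k}$ satisfies $c(u,y)\ge k+1$. Thus \eqref{eq:ballcond} holds for every $k$.

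Since $u$ was arbitrary, every vertex of $P_n$ is consistent under $c$, and therefore $\ldc(P_n)=n$.

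There is no genuine obstacle in this argument. The only point requiring care is the observation that the identity labeling makes label distance equal to graph distance. Once that is noted, consistency follows at once, and no case analysis is needed.
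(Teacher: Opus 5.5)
Your proof is correct and is the same as the paper's: label the path in order, so that $c(v_i,v_j)=|i-j|=d(v_i,v_j)$ and every vertex is consistent. Your extra check via Lemma~\ref{lem:ball} and your remark on the trivial upper bound $\ldc(P_n)\le n$ are fine but not needed.
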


\begin{proof}
Label the vertices $v_1,\dots,v_n$ in path order. Then
$c(v_i,v_j)=|i-j|=d(v_i,v_j)$, so every vertex is consistent.
\end{proof}

The converse of Corollary~\ref{cor:interval} fails even for a path, so
Lemma~\ref{lem:ball}, rather than intervalhood alone, is needed below.

\begin{example}
\label{ex:notsufficient}
Let $G$ be the path $x_2\,u\,x_1\,y$ and let
\[
  c(x_1)=1,\qquad c(x_2)=2,\qquad c(u)=3,\qquad c(y)=4 .
\]
Then all three sets $c(\Ball{u}{k})$ are intervals, but $u$ is not consistent:
$d(u,x_1)=1<2=d(u,y)$ while $c(u,x_1)=2>1=c(u,y)$. In the language of
Lemma~\ref{lem:ball}, the labels in $\Ball{u}{1}$ are not positioned correctly
about $c(u)$.
\end{example}

Finally we record two facts from \cite{CH2025} that will be used repeatedly.
The first is the construction of \cite[Section 4]{CH2025}: label some leaf $v$
by $1$, its neighbor by $2$, and the remaining vertices with increasing labels
according to their distance to $v$. The second holds because the map
$x\mapsto c(u,x)$ is unchanged when $c$ is reversed.

\begin{proposition}[\cite{CH2025}]
\label{prop:basic}
\begin{enumerate}
\item[\textup{(i)}] $\ldc(T)\ge2$ for every tree $T$.
\item[\textup{(ii)}] If $c$ is a labeling of a connected graph $G$ and $c'$ is
defined by $c'(x)=n+1-c(x)$, then $S_{c'}(G)=S_c(G)$.
\end{enumerate}
\end{proposition}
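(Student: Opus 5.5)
The statement to be proved is Proposition~\ref{prop:basic}, which has two parts, and I would treat them separately, starting with the easier part (ii).

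\textbf{Part (ii).} The argument is a direct computation. For all $u,x$,
\[
c'(u,x)=|(n+1-c(u))-(n+1-c(x))|=|c(x)-c(u)|=c(u,x).
\]
The consistency condition \eqref{eq:ldc} for $u$ depends only on the distances $d(u,\cdot)$ and on the map $x\mapsto c(u,x)$. So $u$ is consistent under $c'$ if and only if it is consistent under $c$. I would also note that $c'$ is again a bijection onto $[1,n]$.

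\textbf{Part (i).} I would follow the construction indicated in the text. If $n=1$ the tree has a single vertex. That vertex is trivially consistent, so the claim is read for $n\ge2$, where $P_2$ and $P_1$ are already covered by Proposition~\ref{prop:path}.

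For $n\ge2$ the construction goes as follows.
\begin{itemize}
\item Pick a leaf $v$ and let $w$ be its unique neighbor.
\item Order all vertices by nondecreasing distance from $v$, breaking ties arbitrarily, and label them $1,2,\dots,n$ in that order.
\item Then $c(v)=1$ and $c(w)=2$, since $w$ is the only vertex at distance $1$ from $v$.
\end{itemize}

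\textbf{Consistency of $v$.} Here $c(v,x)=c(x)-1$, and this is monotone nondecreasing in $d(v,x)$ by construction. Hence if $d(v,x)<d(v,y)$, then $c(x)<c(y)$, which gives \eqref{eq:ldc}.

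\textbf{Consistency of $w$.} This is the main step. I would use Lemma~\ref{lem:ball}. Every vertex $x\ne v,w$ lies in a component of $T-w$ not containing $v$, so $d(w,x)=d(v,x)-1$. Thus, apart from $v$, distances from $w$ are exactly distances from $v$ shifted down by one. Consequently, for $x\ne v$ we have $c(w,x)=c(x)-2$, which is nondecreasing in $d(w,x)$. The only exceptional vertex is $v$ itself, with $d(w,v)=1$ and $c(w,v)=1$.

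Now take $k\ge1$. The ball $\Ball{w}{k}$ contains $v$ and exactly the other vertices with $d(v,x)\le k+1$. The labels of these vertices form an initial segment $[1,m]$, because labels increase with distance from $v$. So
\[
\max\{c(w,x):x\in\Ball{w}{k}\}=m-2 \quad(\text{or }1\text{ if }m\le3),
\]
while every $y$ outside the ball has $c(y)>m$, hence $c(w,y)\ge m-1$. Therefore \eqref{eq:ballcond} holds. For $k=0$ the left side is $0$, so the condition is trivial.

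The only delicate point is this exceptional vertex $v$, whose label $1$ sits on the "wrong side" of $c(w)=2$. It is harmless because $c(w,v)=1$ is the smallest possible label distance and $v$ is at distance $1$ from $w$, so it never violates the inequality. With $v$ and $w$ both consistent, $\ldc(T)\ge2$.
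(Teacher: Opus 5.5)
Your proof is correct and follows the paper's approach: part (ii) comes from the invariance of $x\mapsto c(u,x)$ under $c\mapsto n+1-c$, and part (i) from the construction of \cite{CH2025} that labels a leaf $v$ by $1$, its neighbor $w$ by $2$ and the remaining vertices by distance from $v$; your observation that $\Ball{w}{k}=\Ball{v}{k+1}$ for $k\ge1$ supplies the verification the paper leaves implicit. One small slip: the bound needs $n\ge2$ because $\ldc(P_1)=1$, so $P_1$ is not ``covered by'' Proposition~\ref{prop:path}, and that appeal is unnecessary anyway since your general construction already handles $n=2$.
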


%% file: path-structure.tex
\section{The path structure theorem}
\label{sec:path}

Throughout this section $T$ denotes a tree. We first prove two lemmas valid
for arbitrary connected graphs.

\begin{lemma}
\label{lem:monotone}
Let $c$ be a labeling of a connected graph $G$ and let $u,v\in S_c(G)$ with
$c(u)<c(v)$. If $P:u=u_1u_2\cdots u_k=v$ is a shortest $u$--$v$ path in $G$,
then
\[
  c(u_1)<c(u_2)<\cdots<c(u_k).
\]
\end{lemma}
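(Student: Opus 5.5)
The plan is to compare the two consistency conditions, one from $u$ and one from $v$, along the path $P$, and to show first that every label on $P$ lies between $c(u)$ and $c(v)$; monotonicity then follows. Put $a=c(u)$ and $b=c(v)$, so $a<b$, and note $k\ge2$ because $u\ne v$. Since $P$ is a shortest path, $d(u,u_i)=i-1$ and $d(v,u_i)=k-i$ for every $i$. Consequently, for $i<j$ we have $d(u,u_i)<d(u,u_j)$ and $d(v,u_j)<d(v,u_i)$.

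First I will apply \eqref{eq:ldc} at each endpoint separately. Applying it at $u$ to the pair $u_i,u_j$ with $i<j$ gives
\[
|c(u_i)-a|\le |c(u_j)-a| .
\]
Applying it at $v$ to the same pair gives
\[
|c(u_j)-b|\le |c(u_i)-b| .
\]

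Second, I will show that $c(u_i)\in[a,b]$ for every $i$. For $i<k$, take $j=k$ in the condition at $u$; this gives $|c(u_i)-a|\le b-a$, so $c(u_i)\in[2a-b,\,b]$. For $i>1$, take the pair $(1,i)$ in the condition at $v$; this gives $|c(u_i)-b|\le b-a$, so $c(u_i)\in[a,\,2b-a]$. Every $1<i<k$ satisfies both bounds, and the intersection of the two intervals is exactly $[a,b]$. The endpoints $i=1$ and $i=k$ are trivially in $[a,b]$. This interval confinement is the only step with any content; the idea is to use the far endpoint as a comparison vertex in each of the two conditions.

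Finally, once all labels on $P$ lie in $[a,b]$, we have $|c(u_i)-a|=c(u_i)-a$ throughout. The inequality at $u$ then reads $c(u_i)\le c(u_j)$ for all $i<j$. Because $c$ is injective, this inequality is strict, which yields $c(u_1)<c(u_2)<\cdots<c(u_k)$. I do not expect a real obstacle anywhere in this argument.
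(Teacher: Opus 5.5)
Your proof is correct, and it uses the same two ingredients as the paper's proof, arranged differently. The paper argues by contradiction. It takes the first descent index $m_0$ and uses consistency of $u$ on the consecutive pair $(u_{m_0-1},u_{m_0})$ to show that the label must drop below $c(u)$ at $u_{m_0}$. It then uses consistency of $v$, with $u$ itself as the comparison vertex, to rule this out.

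You apply the second ingredient globally first: consistency of $v$ against the far endpoint $u$ gives $c(u_i)\ge c(u)$ for every $i$. Consistency of $u$ then yields monotonicity directly, with no minimal counterexample and no case split. This makes your version slightly more streamlined.

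Half of your confinement step is superfluous. The upper bound $c(u_i)\le c(v)$, which you obtain from the condition at $u$ with $j=k$, is never used. The lower bound $c(u_i)\ge c(u)$ alone already lets you remove the absolute values in the condition at $u$, and the upper bound then follows from monotonicity anyway.
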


\begin{proof}
Suppose not, and let
\[
  m_0=\min\{\,l : 2\le l\le k,\ c(u_{l-1})>c(u_l)\,\}.
\]
By minimality of $m_0$ we have $c(u)=c(u_1)<c(u_2)<\cdots<c(u_{m_0-1})$; in
particular $c(u_{m_0-1})\ge c(u)$, with equality exactly when $m_0=2$.

Since $P$ is a shortest path, $d(u,u_{m_0-1})=m_0-2<m_0-1=d(u,u_{m_0})$, so
consistency of $u$ gives
\[
  c(u_{m_0-1})-c(u)=c(u,u_{m_0-1})\le c(u,u_{m_0})=|c(u)-c(u_{m_0})| .
\]
If $c(u_{m_0})\ge c(u)$, the right-hand side of the last inequality equals
$c(u_{m_0})-c(u)$, which is strictly smaller than $c(u_{m_0-1})-c(u)$ because
$c(u_{m_0})<c(u_{m_0-1})$; this is a contradiction. Hence
\[
  c(u_{m_0})<c(u).
\]
In particular $u_{m_0}\ne v$, since $c(v)>c(u)$; thus $m_0<k$. Now $m_0\ge2$
yields $d(v,u_{m_0})=k-m_0<k-1=d(v,u)$, so consistency of $v$ gives
$c(v,u_{m_0})\le c(v,u)$. But by the inequality just established we have
$c(u_{m_0})<c(u)<c(v)$, so
\[
  c(v,u_{m_0})=c(v)-c(u_{m_0})>c(v)-c(u)=c(v,u),
\]
a contradiction.
\end{proof}

\begin{lemma}
\label{lem:threebranch}
Let $G$ be a connected graph and let $u_1,u_2,u_3$ lie in three distinct
components of $G-w$ for some vertex $w$. Then for every labeling $c$ of $G$ at
least one of $u_1,u_2,u_3$ fails to be consistent under $c$.
\end{lemma}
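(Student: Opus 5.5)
The plan is to argue by contradiction using Lemma~\ref{lem:monotone} three times. Suppose $u_1,u_2,u_3$ are all consistent under $c$. They lie in components of $G-w$, so none of them equals $w$, and their labels are distinct. After relabeling the indices we may assume
\[
c(u_1)<c(u_2)<c(u_3).
\]
(The three components play symmetric roles, so this costs nothing.)

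The key structural fact is the following. If $x$ and $y$ lie in distinct components of $G-w$, then every $x$--$y$ path in $G$ passes through $w$. In particular, any shortest $x$--$y$ path contains $w$ as an interior vertex, since $w\ne x,y$.

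We apply Lemma~\ref{lem:monotone} to two of the three pairs.

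First take the pair $u_1,u_2$, where $c(u_1)<c(u_2)$. Along a shortest $u_1$--$u_2$ path the labels strictly increase from $u_1$ to $u_2$. Since $w$ is an interior vertex of this path, this gives $c(w)<c(u_2)$.

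Next take the pair $u_2,u_3$, where $c(u_2)<c(u_3)$. Along a shortest $u_2$--$u_3$ path the labels strictly increase starting from $u_2$. Since $w$ is interior, this gives $c(u_2)<c(w)$.

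These two inequalities contradict each other, so at least one of $u_1,u_2,u_3$ is not consistent under $c$.

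There is no real obstacle. The only points to check are that $w$ is genuinely interior to each shortest path, which holds because the $u_i$ differ from $w$ and lie in different components of $G-w$, and that Lemma~\ref{lem:monotone} applies with the orientation given by the label order. The pair $u_1,u_3$ is never used; only the middle label $c(u_2)$ matters.
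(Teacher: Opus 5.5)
Your proof is correct and is essentially the paper's argument: order the labels, apply Lemma~\ref{lem:monotone} to shortest $u_1$--$u_2$ and $u_2$--$u_3$ paths (each of which has $w$ as an interior vertex), and obtain the contradictory inequalities $c(w)<c(u_2)<c(w)$. Your opening sentence says the lemma is used three times, but as you later note yourself, only two applications are needed.
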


\begin{proof}
Suppose $u_1,u_2,u_3\in S_c(G)$; without loss of generality
$c(u_1)<c(u_2)<c(u_3)$. As $u_1$ and $u_2$ lie in distinct components of
$G-w$, every $u_1$--$u_2$ path passes through $w$, and $w$ is an interior
vertex of any such path; choosing a shortest one and applying
Lemma~\ref{lem:monotone} gives $c(u_1)<c(w)<c(u_2)$. The same argument applied
to $u_2$ and $u_3$ gives $c(u_2)<c(w)<c(u_3)$. Hence $c(u_2)<c(w)<c(u_2)$, a
contradiction.
\end{proof}

For vertices $x,y,z$ of a tree, let $\med(x,y,z)$ denote their unique median,
the vertex common to $P_{xy}$, $P_{yz}$ and $P_{xz}$; see \cite{Buneman1974}.
Unless the median is one of $x,y,z$, these vertices lie in three distinct
components after it is deleted.

\begin{theorem}
\label{thm:path}
Let $T$ be a tree, let $c$ be a labeling of $T$, and put $S=S_c(T)$. Assume
$|S|\ge2$, and let $a,b\in S$ be the vertices with
\[
  c(a)=\min\{c(x):x\in S\},\qquad c(b)=\max\{c(x):x\in S\}.
\]
Write $P_{ab}=p_1p_2\cdots p_m$ with $p_1=a$ and $p_m=b$. Then
\begin{enumerate}
\item[\textup{(i)}] $S\subseteq V(P_{ab})$;
\item[\textup{(ii)}] $c(p_i)=c(a)+i-1$ for $1\le i\le m$; in particular
$c(b)=c(a)+m-1$ and $c\bigl(V(P_{ab})\bigr)=[\,c(a),c(b)\,]$;
\item[\textup{(iii)}] every $z\in V(T)\setminus V(P_{ab})$ satisfies
$c(z)<c(a)$ or $c(z)>c(b)$.
\end{enumerate}
\end{theorem}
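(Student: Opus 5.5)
The plan is to prove the three parts in the order (i), (iii), (ii). Part (ii) will then follow from Lemma~\ref{lem:monotone} combined with (iii).

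\emph{Step 1: part (i).} Take $s\in S\setminus\{a,b\}$, so that $c(a)<c(s)<c(b)$, and let $w=\med(a,b,s)$. There are three cases.
\begin{itemize}
\item If $w\notin\{a,b,s\}$, then $a,b,s$ lie in three distinct components of $T-w$. This contradicts Lemma~\ref{lem:threebranch}.
\item If $w=a$, then $a$ is an interior vertex of the unique path $P_{sb}$. Lemma~\ref{lem:monotone} applied to $s,b$ (with $c(s)<c(b)$) makes labels increase along $P_{sb}$, so $c(s)<c(a)$. This contradicts the choice of $a$.
\item If $w=b$, the symmetric argument applied to $a,s$ along $P_{as}$ gives $c(s)>c(b)$, again a contradiction.
\end{itemize}
Hence $w=s$, so $s\in V(P_{ab})$.

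\emph{Step 2: part (iii).} This is the main step. Lemma~\ref{lem:monotone} applied to $a,b$ already gives $c(p_1)<\dots<c(p_m)$. Let $z\notin V(P_{ab})$ and suppose, for contradiction, that $c(a)<c(z)<c(b)$. Let $p_i$ be the vertex of $P_{ab}$ nearest to $z$ and put $t=d(z,p_i)\ge1$. Then
\[
d(a,z)=i-1+t,\qquad d(b,z)=m-i+t .
\]
\begin{itemize}
\item For every index $l\le i+t-1$ we have $d(a,p_l)=l-1<d(a,z)$. Consistency of $a$ then gives $c(p_l)-c(a)\le c(z)-c(a)$, using that all path labels are at least $c(a)$. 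Hence $c(p_l)<c(z)$, with strict inequality because $c$ is injective.
\item For every index $l>i-t$ we have $d(b,p_l)=m-l<d(b,z)$. Consistency of $b$ then gives $c(b)-c(p_l)\le c(b)-c(z)$, so $c(p_l)>c(z)$.
\end{itemize}
Since $t\ge1$, the index $l=i$ satisfies both $l\le i+t-1$ and $l>i-t$. This forces $c(p_i)<c(z)<c(p_i)$, a contradiction.

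The only real care needed is the sign bookkeeping: every label appearing in the comparisons is at least $c(a)$ (respectively at most $c(b)$), so the absolute values in $c(u,x)$ open up the right way.

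\emph{Step 3: part (ii).} By Step 2, every label in $[c(a),c(b)]$ is carried by a vertex of $P_{ab}$. By monotonicity along the path, every vertex of $P_{ab}$ carries a label in $[c(a),c(b)]$. So the $m$ strictly increasing labels $c(p_1)<\dots<c(p_m)$ fill the interval exactly. Therefore $m=c(b)-c(a)+1$ and $c(p_i)=c(a)+i-1$ for $1\le i\le m$.
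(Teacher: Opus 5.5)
Your proposal is correct and follows the paper's proof essentially step for step: the median case analysis via Lemmas~\ref{lem:threebranch} and \ref{lem:monotone} for (i), consistency of $a$ and of $b$ applied to $p_i$ versus $z$ for (iii), and the cardinality count for (ii). The differences are cosmetic. You start from $s\in S\setminus\{a,b\}$ rather than $s\notin V(P_{ab})$, and in Step~2 you prove the inequalities for a range of indices $l$ when, as in the paper, only $l=i$ is needed.
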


\begin{proof}
\textbf{Step 1: proof of (i).}
Suppose there is $x\in S\setminus V(P_{ab})$, and let $w=\med(a,b,x)$. Since
$w\in V(P_{ab})$ and $x\notin V(P_{ab})$ we have $w\ne x$. We distinguish
three cases.

If $w\notin\{a,b\}$, then $w$ is an interior vertex of $P_{ab}$, and $a$, $b$,
$x$ lie in three distinct components of $T-w$. As $a,b,x\in S$, this
contradicts Lemma~\ref{lem:threebranch}.

If $w=a$, then $a$ lies on $P_{xb}$ and $a\notin\{x,b\}$, so $a$ is an interior
vertex of $P_{xb}$. Since $x,b\in S$, Lemma~\ref{lem:monotone} applied to $x$
and $b$ shows that $c(a)$ lies strictly between $c(x)$ and $c(b)$. This
contradicts the minimality of $c(a)$ on $S$, because $x,b\in S$ forces
$c(a)<c(x)$ and $c(a)<c(b)$.

If $w=b$, the symmetric argument shows that $c(b)$ lies strictly between $c(x)$
and $c(a)$, contradicting the maximality of $c(b)$ on $S$. Hence
$S\subseteq V(P_{ab})$.

\medskip
\noindent\textbf{Step 2: the labels increase along $P_{ab}$.}
The path $P_{ab}$ is the unique, hence shortest, $a$--$b$ path in $T$, and
$a,b\in S$ with $c(a)<c(b)$. Lemma~\ref{lem:monotone} therefore gives
\begin{equation}
  c(a)=c(p_1)<c(p_2)<\cdots<c(p_m)=c(b),
  \label{eq:increase}
\end{equation}
and in particular $c(a)\le c(p_i)\le c(b)$ for all $i$.

\medskip
\noindent\textbf{Step 3: proof of (iii).}
Suppose $z\notin V(P_{ab})$ satisfies $c(a)<c(z)<c(b)$, and let
$p_i=\med(a,b,z)$, the vertex of $P_{ab}$ nearest to $z$; put
$r=d(p_i,z)\ge1$. Every $a$--$z$ path passes through $p_i$, and likewise for
$b$, so
\[
  d(a,z)=(i-1)+r>i-1=d(a,p_i),
  \qquad
  d(b,z)=(m-i)+r>m-i=d(b,p_i).
\]

Consistency of $a$ applied to the pair $(p_i,z)$ gives $c(a,p_i)\le c(a,z)$. By
\eqref{eq:increase} we have $c(p_i)\ge c(a)$, and by assumption $c(z)>c(a)$, so
both absolute values may be resolved and we obtain
\[
  c(p_i)-c(a)\le c(z)-c(a),
  \qquad\text{that is,}\qquad c(p_i)\le c(z).
\]

Consistency of $b$ applied to the same pair gives $c(b,p_i)\le c(b,z)$. By
\eqref{eq:increase} we have $c(p_i)\le c(b)$, and by assumption $c(z)<c(b)$,
whence
\[
  c(b)-c(p_i)\le c(b)-c(z),
  \qquad\text{that is,}\qquad c(p_i)\ge c(z).
\]

Combining the two inequalities just obtained yields $c(p_i)=c(z)$. But
$p_i\in V(P_{ab})$ and $z\notin V(P_{ab})$, so $p_i\ne z$, contradicting the
injectivity of $c$. This proves (iii).

\medskip
\noindent\textbf{Step 4: proof of (ii).}
By Step 3, every vertex whose label lies in the interval $[\,c(a),c(b)\,]$
belongs to $V(P_{ab})$; hence $[\,c(a),c(b)\,]\subseteq c\bigl(V(P_{ab})\bigr)$.
By \eqref{eq:increase} the reverse inclusion also holds, so
$c\bigl(V(P_{ab})\bigr)=[\,c(a),c(b)\,]$. Comparing cardinalities gives
$c(b)-c(a)+1=m$, and \eqref{eq:increase} then forces $c(p_i)=c(a)+i-1$ for
every $i$.
\end{proof}

\begin{remark}
\label{rem:sharp}
Theorem~\ref{thm:path} concerns an arbitrary labeling, not merely an optimal
one. Notice also that it does not assert that $S$ is an interval of $P_{ab}$.
\end{remark}

Since $\ldc(T)\ge2$ for every tree $T$ by Proposition~\ref{prop:basic}(i), the
hypothesis $|S|\ge2$ of Theorem~\ref{thm:path} is satisfied by every optimal
labeling of a tree.

\subsection{Two consequences}

For the rest of this section we keep the notation of Theorem~\ref{thm:path},
and we set
\[
\begin{aligned}
  L&=\{\,z\in V(T)\setminus V(P_{ab}) : c(z)<c(a)\,\},\\
  H&=\{\,z\in V(T)\setminus V(P_{ab}) : c(z)>c(b)\,\}.
\end{aligned}
\]
so that $V(T)$ is the disjoint union of $V(P_{ab})$, $L$ and $H$ by
Theorem~\ref{thm:path}(iii). Since $V(P_{ab})$ carries exactly the labels
$[\,c(a),c(b)\,]$ by Theorem~\ref{thm:path}(ii), we have moreover
\[
  c(L)=[\,1,\ c(a)-1\,],\qquad c(H)=[\,c(b)+1,\ n\,];
\]
that is, $L$ and $H$ do not merely lie below and above the label block of
$P_{ab}$, they fill it out completely.

The first consequence bounds how fast the balls around $a$ and $b$ may grow.
It is the workhorse of Sections~\ref{sec:kary} and \ref{sec:spiders}: a tree
whose balls grow quickly simply cannot carry many consistent vertices.

\begin{corollary}
\label{cor:ballgrowth}
With the notation above, let $t$ be an integer with $1\le t\le m-2$. Then
\[
  \Ball{a}{t}\subseteq\{p_1,\dots,p_{t+1}\}\cup c^{-1}\bigl([\,c(a)-t-1,\ c(a)-1\,]\bigr),
\]
and consequently $|\Ball{a}{t}|\le 2t+2$. Symmetrically,
$\Ball{b}{t}\subseteq\{p_m,\dots,p_{m-t}\}\cup c^{-1}([\,c(b)+1,\ c(b)+t+1\,])$
and $|\Ball{b}{t}|\le 2t+2$.
\end{corollary}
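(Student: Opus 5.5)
We plan to use the ball criterion, Lemma~\ref{lem:ball}, at the consistent vertex $a$ with radius $k=t$, together with the label structure from Theorem~\ref{thm:path}. The statement for $b$ follows by the same argument applied to the reversed labeling $c'(x)=n+1-c(x)$. By Proposition~\ref{prop:basic}(ii) this leaves $S$ unchanged, swaps the roles of $a$ and $b$, and reverses the path. So we treat only $a$.

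\textbf{Step 1: the vertex $p_{t+2}$ lies outside the ball.} Since $t\le m-2$, the vertex $p_{t+2}$ exists and $d(a,p_{t+2})=t+1>t$. By Theorem~\ref{thm:path}(ii), $c(a,p_{t+2})=t+1$. Lemma~\ref{lem:ball} with $k=t$ then gives
\[
c(a,x)\le t+1\quad\text{for every }x\in\Ball{a}{t}.
\]

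\textbf{Step 2: locate each $x\in\Ball{a}{t}$ by cases on Theorem~\ref{thm:path}(iii).}
\begin{itemize}
\item If $x\in V(P_{ab})$, then $x=p_i$ with $i-1=d(a,x)\le t$, so $x\in\{p_1,\dots,p_{t+1}\}$.
\item If $x\in L$, then $c(x)<c(a)$ and $c(a)-c(x)\le t+1$, so $c(x)\in[\,c(a)-t-1,\ c(a)-1\,]$.
\item If $x\in H$, then $c(x)>c(b)=c(a)+m-1$, so $c(a,x)\ge m\ge t+2$. This contradicts Step 1, so this case cannot occur.
\end{itemize}
These cases give the claimed inclusion.

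\textbf{Step 3: count.} The two sets on the right of the inclusion have $t+1$ elements each, since $c^{-1}$ is a bijection on labels in $[1,n]$. Labels below $1$ simply do not occur, which only helps. Hence $|\Ball{a}{t}|\le 2t+2$.

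The only delicate point is excluding $H$. It rests on Theorem~\ref{thm:path}(ii) pinning $c(b)-c(a)=m-1$, and on the hypothesis $t\le m-2$, which guarantees both that $p_{t+2}$ exists and that $m\ge t+2$. I expect no further obstacle.
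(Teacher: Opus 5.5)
Your proof is correct and follows essentially the same route as the paper. Lemma~\ref{lem:ball} at radius $t$, tested against $p_{t+2}$, gives $c(a,x)\le t+1$ on $\Ball{a}{t}$; Theorem~\ref{thm:path} then locates each such vertex, and the reversed labeling handles $b$. The difference is only cosmetic: you sort $x$ through the partition $V(P_{ab})\cup L\cup H$ and use $d(a,p_i)=i-1$ for path vertices, whereas the paper notes directly that the labels $c(a)+1,\dots,c(a)+t+1$ belong exactly to $p_2,\dots,p_{t+2}$, and that $p_{t+2}$ is excluded by distance.
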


\begin{proof}
Since $t+2\le m$, the vertex $p_{t+2}$ exists, and $d(a,p_{t+2})=t+1>t$, so
$p_{t+2}\notin\Ball{a}{t}$; moreover $c(a,p_{t+2})=t+1$ by
Theorem~\ref{thm:path}(ii). Lemma~\ref{lem:ball} applied to $a$ with radius $t$
therefore gives $c(a,x)\le t+1$, that is
$c(x)\in[\,c(a)-t-1,\ c(a)+t+1\,]$, for every $x\in\Ball{a}{t}$. By
Theorem~\ref{thm:path}(ii) the vertices carrying the labels
$c(a)+1,\dots,c(a)+t+1$ are exactly $p_2,\dots,p_{t+2}$, and $p_{t+2}$ has just
been excluded. This proves the stated inclusion, and the cardinality bound
follows since the two sets on the right have at most $t+1$ elements each. The
statement for $b$ follows by applying the above to the reversed labeling
$c'(x)=n+1-c(x)$ of Proposition~\ref{prop:basic}(ii), under which $a$ and $b$
exchange roles, $L$ and $H$ are interchanged, and $P_{ab}$ is traversed in the
opposite direction.
\end{proof}

The second consequence says that a branch hanging off an interior vertex of
$P_{ab}$ must be labeled entirely below $c(a)$ if that vertex is near $a$, and
entirely above $c(b)$ if it is near $b$; near the middle of the path both
requirements apply, so no branch can be there at all.

\begin{corollary}
\label{cor:branch}
With the notation above, let $1<i<m$ and let $y$ be a neighbor of $p_i$ with
$y\notin V(P_{ab})$.
\begin{enumerate}
\item[\textup{(i)}] If $i\le m-2$, then $y\in L$.
\item[\textup{(ii)}] If $i\ge 3$, then $y\in H$.
\item[\textup{(iii)}] Consequently $\deg_T(p_i)=2$ whenever $3\le i\le m-2$.
\end{enumerate}
\end{corollary}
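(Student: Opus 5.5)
The approach is to apply Lemma~\ref{lem:ball} at $a$ (for (i)) and at $b$ (for (ii)), with radius exactly $d(a,y)-1=i-1$ or $d(b,y)-1=m-i$. We then compare $y$ with the path vertex just beyond that radius, whose label distance is known from Theorem~\ref{thm:path}(ii). By Theorem~\ref{thm:path}(iii), $y\in L\cup H$, so it suffices to rule out the wrong alternative in each case.

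For (i), assume $i\le m-2$ and suppose, for contradiction, that $y\in H$, i.e.\ $c(y)>c(b)$. Since $y$ is adjacent to $p_i$ and lies off the path, the path from $a$ to $y$ runs through $p_i$, so $d(a,y)=i$. The vertex $p_{i+2}$ exists because $i+2\le m$, and $d(a,p_{i+2})=i+1>i$. Consistency of $a$ applied to the pair $(y,p_{i+2})$ gives $c(a,y)\le c(a,p_{i+2})=i+1$ by Theorem~\ref{thm:path}(ii). However, $c(a,y)=c(y)-c(a)>c(b)-c(a)=m-1\ge i+1$, which is a contradiction. Hence $y\in L$.

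Part (ii) follows symmetrically. We apply the previous argument to the reversed labeling of Proposition~\ref{prop:basic}(ii), under which $p_i$ becomes $p_{m+1-i}$, the condition $i\ge3$ becomes $m+1-i\le m-2$, and $L$ and $H$ are interchanged. Alternatively, we argue directly: suppose $y\in L$ and use $p_{i-2}$, which satisfies $d(b,p_{i-2})=m-i+2>m-i+1=d(b,y)$. This gives $c(b,y)\le m-i+2$, while $c(b,y)>c(b)-c(a)=m-1\ge m-i+2$ because $i\ge3$.

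For (iii), if $3\le i\le m-2$, any neighbor $y$ of $p_i$ off the path would lie in both $L$ and $H$, which is impossible since these sets are disjoint. So the only neighbors of $p_i$ are $p_{i-1}$ and $p_{i+1}$, and $\deg_T(p_i)=2$. The only delicate point is the distance computation $d(a,y)=i$, which uses that $T$ is a tree and that $y\notin V(P_{ab})$. Apart from that, the argument is an index check.
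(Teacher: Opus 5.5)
Your proof is correct and is the paper's argument. You compare $y$ with $p_{i+2}$ via consistency of $a$ (and with $p_{i-2}$ via $b$, or by reversing the labeling), read off the label distances from Theorem~\ref{thm:path}(ii), and get (iii) from $L\cap H=\emptyset$. One slip in your overview: the relevant radius is $d(a,y)=i$ (resp.\ $d(b,y)=m-i+1$), not $d(a,y)-1$; your actual comparison of $y$ with $p_{i+2}$ does use radius $i$.
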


\begin{proof}
(i) As $y$ is adjacent to $p_i$ and does not lie on $P_{ab}$, we have
$d(a,y)=i$, while $d(a,p_{i+2})=i+1$; thus $y\in\Ball{a}{i}$ and
$p_{i+2}\notin\Ball{a}{i}$, the latter vertex exists because $i\le m-2$.
Lemma~\ref{lem:ball} gives $c(a,y)\le c(a,p_{i+2})=i+1$. If $y\in H$ then
$c(a,y)=c(y)-c(a)\ge c(b)+1-c(a)=m$ by Theorem~\ref{thm:path}(ii), so
$m\le i+1$, contradicting $i\le m-2$. Hence $y\in L$.

(ii) Symmetric, using $d(b,y)=m-i+1$, $d(b,p_{i-2})=m-i+2$ and
Proposition~\ref{prop:basic}(ii).

(iii) If $3\le i\le m-2$ and $p_i$ had a neighbor $y$ outside $P_{ab}$, then
$y\in L\cap H=\emptyset$ by (i) and (ii). So all neighbors of $p_i$ lie on
$P_{ab}$, and an interior vertex of a path in a tree has exactly two such
neighbors.
\end{proof}

%% file: kary.tex
\section{Complete \texorpdfstring{$k$}{k}-ary trees}
\label{sec:kary}

Let $k\ge2$ and $h\ge1$ be integers. The \emph{complete $k$-ary tree of height
$h$}, denoted $\Tkh$, is the rooted tree in which every vertex at distance less
than $h$ from the root has exactly $k$ children and every vertex at distance $h$
from the root is a leaf. The \emph{level} of a vertex is its distance to the
root, so $\Tkh$ has $k^{j}$ vertices at level $j$ for $0\le j\le h$ and
$n(\Tkh)=(k^{h+1}-1)/(k-1)$ vertices in total. Every leaf is at level $h$; the
root has degree $k$, a vertex at level $j$ with $1\le j\le h-1$ has degree
$k+1$, and a leaf has degree $1$. Since the degrees occurring in $\Tkh$ are
therefore $1$, $k$ and $k+1$, we obtain the following observation, which will be
used twice in the proof of Theorem~\ref{thm:kary}:
\[
  \Tkh \text{ has a vertex of degree } 2
  \quad\Longleftrightarrow\quad
  k=2 ,
\]
and in that case the root is the only such vertex.
We first record a lower bound, valid for all trees, which is where the value $3$
comes from.

\begin{lemma}
\label{lem:twoleaves}
Let $T$ be a tree containing a vertex $w$ that is adjacent to at least two
leaves. Then $\ldc(T)\ge3$.
\end{lemma}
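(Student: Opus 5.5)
The plan is to give an explicit labeling and check, via Lemma~\ref{lem:ball}, that three vertices are consistent under it. The labeling is a variant of the construction behind Proposition~\ref{prop:basic}(i), but centred at $w$ rather than at a leaf. Let $x_1,x_2$ be two leaves adjacent to $w$. Put
\[
c(x_1)=1,\qquad c(w)=2,\qquad c(x_2)=3 .
\]
Assign the labels $4,5,\dots,n$ to the remaining vertices in nondecreasing order of their distance from $w$, breaking ties arbitrarily. Then for every $j\ge1$ the labels at distance exactly $j$ from $w$ form an interval, and it lies above every label at smaller distance. If $T=P_3$ the claim is just Proposition~\ref{prop:path}, so assume $n\ge4$; then $w$ has a third neighbour, which receives the label $4$. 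I will show that $x_1,w,x_2$ are all consistent.

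\textbf{The vertex $w$.} For every $k\ge1$, $c(\Ball{w}{k})=[1,M_k]$ with $M_k\ge3$, and every vertex outside the ball has a label exceeding $M_k$. Hence the left side of \eqref{eq:ballcond} is $M_k-2$ and the right side is at least $M_k-1$. For $k=0$ the left side is $0$, so \eqref{eq:ballcond} holds trivially.

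\textbf{The vertex $x_1$.} Here $c(x_1,v)=c(v)-1$. For $v\ne x_1$ we have $d(x_1,v)=d(w,v)+1$, except that $w$ itself sits at distance $1$. So the balls about $x_1$ are the sets $\{x_1\}$, $\{x_1,w\}$, and then $\Ball{w}{k-1}$ for $k\ge2$. Their labels are initial segments $[1,M]$ and all outside labels are larger. Since $x\mapsto c(x,x_1)$ is increasing, \eqref{eq:ballcond} follows at once.

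\textbf{The vertex $x_2$.} This is the only delicate case, because $c(x_2)=3$ is not an endpoint of the label range. We have $\Ball{x_2}{0}=\{x_2\}$ and $\Ball{x_2}{1}=\{x_2,w\}$, which carries the label distance $1$; the outside vertices have label distances $\ge1$ (they are $x_1$, giving $2$, and labels $\ge4$), so \eqref{eq:ballcond} holds for $k\le1$. For $k\ge2$, $\Ball{x_2}{k}=\Ball{w}{k-1}$. Write $M=\max c(\Ball{w}{k-1})$, which is at least $4$ because $w$ has a third neighbour. The maximal label distance from $x_2$ inside the ball is then $\max\{2,M-3\}$. Every outside vertex has label at least $M+1\ge5$, so its label distance from $x_2$ is at least $M-2$. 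Since $M-2\ge\max\{2,M-3\}$, condition \eqref{eq:ballcond} holds.

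The main obstacle is this last check at $x_2$. Its label sits one above $c(w)$, so the leaf $x_1$ on the far side contributes label distance $2$ at radius $2$, and one must make sure the vertices at distance $3$ are not closer in label. This is exactly where the assumption $n\ge4$ (equivalently, the label $4$ being used at distance $2$ from $x_2$) is needed. With all three vertices consistent, $\ldc(T)\ge3$.
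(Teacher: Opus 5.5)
Your proof is correct and is essentially the paper's argument: the same labeling ($c(x_1)=1$, $c(w)=2$, $c(x_2)=3$, the rest by distance from $w$), the same reduction of $T=P_3$ to Proposition~\ref{prop:path}, and the same ball checks at $w$, $x_1$ and $x_2$, with $M\ge4$ as the key input at $x_2$. One harmless slip: the labels at distance exactly $1$ from $w$ are $\{1,3,4,\dots,M_1\}$, which is neither an interval nor above $c(w)=2$; but you only ever use the correct fact $c(\Ball{w}{k})=[1,M_k]$ for $k\ge1$.
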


\begin{proof}
Let $x_1,x_2$ be leaves adjacent to $w$ and put $D=\deg_T(w)$. If $D=2$,
then $T=P_3$ and the result follows from Proposition~\ref{prop:path}; assume
$D\ge3$. Set $M_j=|\Ball{w}{j}|$ and label
\[
  c(x_1)=1,\qquad c(w)=2,\qquad c(x_2)=3,
\]
then assign the remaining labels in nondecreasing order of distance from $w$.
Consequently
\[
  c\bigl(\Ball{w}{j}\bigr)=[\,1,M_j\,]\qquad\text{for every } j\ge1 .
\]

For $w$, the two sides of \eqref{eq:ballcond} are $M_j-2$ and $M_j-1$.
For $x_1$, they are $1$ and $2$ at radius $1$, and $M_{j-1}-1$ and
$M_{j-1}$ at radius $j\ge2$. For $x_2$, they are both $1$ at radius $1$,
while at radius $j\ge2$ they are
$\max\{2,M_{j-1}-3\}$ and $M_{j-1}-2$; here $M_{j-1}\ge D+1\ge4$.
Thus Lemma~\ref{lem:ball} makes $w,x_1,x_2$ consistent.
\end{proof}

\begin{theorem}
\label{thm:kary}
For all integers $k\ge2$ and $h\ge1$,
\[
  \ldc\bigl(\Tkh\bigr)=
  \begin{cases}
    5, & \text{if } k=2 \text{ and } h=2,\\[2pt]
    3, & \text{otherwise.}
  \end{cases}
\]
\end{theorem}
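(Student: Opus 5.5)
The lower bound $\ldc(\Tkh)\ge3$ is immediate from Lemma~\ref{lem:twoleaves}: any vertex at level $h-1$ is adjacent to $k\ge2$ leaves. For $T^{2,2}$ (seven vertices, root $r$ with children $u,v$, leaves $u_1,u_2$ under $u$ and $v_1,v_2$ under $v$) I will exhibit a labeling with five consistent vertices. Take the path $u_1\,u\,r\,v\,v_1$ with labels $2,3,4,5,6$, and put $c(u_2)=1$, $c(v_2)=7$. Then Lemma~\ref{lem:ball} can be checked directly at each of the five path vertices. For example, at $u_1$ the balls have labels $\{2\},\{2,3\},\{1,2,3,4\}$, and the remaining vertices have label distances $3,4,5$, which is fine. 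The checks at $r$ and $u$ are similar, and the other two follow by the symmetry $c\mapsto 8-c$ combined with swapping the two subtrees.

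For the upper bound, fix any labeling $c$ with $|S|\ge2$, and use the path $P_{ab}=p_1\cdots p_m$ of Theorem~\ref{thm:path}. Since $S\subseteq V(P_{ab})$, it suffices to bound how many path vertices can be consistent. First I bound $m$. By Corollary~\ref{cor:branch}(iii), every $p_i$ with $3\le i\le m-2$ has degree $2$. In $\Tkh$ only the root has degree $2$, and only when $k=2$. So if $k\ge3$ we get $m\le 5$, and if $k=2$ we get $m\le 6$. Next I use Corollary~\ref{cor:ballgrowth} with $t=1$, valid when $m\ge3$: $|\Ball{a}{1}|\le4$, so $\deg(a)\le3$. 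When $m\ge4$, taking $t=2$ gives $|\Ball{a}{2}|\le6$. In $\Tkh$ with $h\ge2$, however, a vertex of degree $\ge2$ has a ball of radius $2$ of size at least $1+k+k^2\ge7$, or else it is a leaf with $|\Ball{a}{2}|=1+1+k\le 6$. The same holds for $b$. So for long paths both $a$ and $b$ must be leaves or vertices whose balls are small.

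The remaining work is a case analysis on the positions of $a$ and $b$ in the tree, and this is the main obstacle. If $a$ is a leaf, then $p_2$ is its parent and $p_3$ is the grandparent or a sibling leaf. Corollary~\ref{cor:branch}(i) says that every off-path neighbour $y$ of $p_2$ lies in $L$, and the interval structure of Corollary~\ref{cor:ballgrowth} (the ball $\Ball{a}{t}$ uses only labels within $t+1$ below $c(a)$) caps how many such neighbours there can be. Concretely, I want to show $m\le 3$ whenever $(k,h)\ne(2,2)$. If $p_2$ and $p_3$ both have off-path neighbours (degree $k+1\ge3$), then the vertices adjacent to $p_2$ and $p_3$ must lie in $L$ by (i) and in $H$ by (ii) as soon as $m\ge5$, which is impossible. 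For $m=4$, I plan to apply Lemma~\ref{lem:ball} at $a$ with radius $2$: all $k^2$ or so vertices within distance $2$ would need labels in $[c(a)-3,c(a)+3]$, which fails when $k\ge3$ or $h\ge3$, because then there are too many vertices near $p_2$ or $p_3$. If neither $a$ nor $b$ is a leaf, $\deg\ge k\ge2$, and the radius-$2$ ball bound above already forces $m\le3$.

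Having $m\le3$, we get $|S|\le m\le3$, and this finishes the proof. Where the generic ball counting is too weak (small $k$ or $h$, such as $T^{2,1}=P_3$, $T^{3,1}$, and $T^{2,3}$), I will handle the cases directly with Lemma~\ref{lem:ball}. I expect the $k=2$, $h\ge3$ case to need the most care, since the degree-$2$ root can sit in the middle of $P_{ab}$.
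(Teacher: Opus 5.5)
\textbf{There is a genuine gap in the upper bound.} Your lower bound is fine: your labeling of $T^{2,2}$ is exactly the paper's, and the symmetry argument is valid. The problem is that the case analysis you defer is the whole content of the proof, and the counting claims you offer in its place are false precisely where it matters.

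\emph{Claim 1.} A non-leaf need not satisfy $|\Ball{v}{2}|\ge 1+k+k^2$. A vertex at level $h-1$ has only $2k+2$ vertices within distance $2$ (or $2k+1$ when $h=2$), because its children are leaves. For $k=2$ and $h\ge3$ this is $6$, so the bound $|\Ball{a}{2}|\le6$ does not exclude such vertices. Consequently your claim that ``if neither $a$ nor $b$ is a leaf, the radius-$2$ bound forces $m\le3$'' fails.

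\emph{Claim 2.} For $m=4$ with $a$ a leaf, $p_3$ is the grandparent and $\Ball{a}{2}$ has only $k+2$ vertices, not ``$k^2$ or so''. Corollary~\ref{cor:ballgrowth} then only requires the $k-1$ siblings of $a$ to fit into $[c(a)-3,c(a)-1]$. This is possible whenever $k\le4$, so nothing ``fails when $k\ge3$ or $h\ge3$''.

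\emph{Claim 3.} Corollary~\ref{cor:branch}(iii) actually gives $m\le4$ for $k\ge3$ and $m\le5$ for $k=2$, not $5$ and $6$. With your bound of $6$ you never obtain $\ldc(T^{2,2})\le5$, which you do not address at all.

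\textbf{What is missing.}

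\emph{Case $k\ge3$.} The radius-$1$ and radius-$2$ bounds force both $a$ and $b$ to be leaves; the root of $T^{3,h}$ is excluded since it has $13$ vertices within distance $2$. All leaves lie at level $h$, so $d(a,b)$ is even and $m$ is odd. Hence $m\ge4$ would give $m\ge5$, contradicting Corollary~\ref{cor:branch}(iii).

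\emph{Case $k=2$, $h\ge3$.} Here $a$ and $b$ are leaves or lie at level $h-1$. Parity leaves two configurations, and radius-$2$ counting settles neither.
\begin{itemize}
\item If $m=5$, then $p_3$ must be the root. This forces $a$ to level $2$, hence $h=3$, and one needs the radius-$3$ bound $|\Ball{a}{3}|\le8<9$.
\item If $m=4$, the path is leaf, parent, grandparent, uncle. This happens for every $h\ge3$, so it is not a small exception you can check by hand. The contradiction comes from comparing labels. The two leaf children of $b$ must carry $c(b)+1$ and $c(b)+2$ (Corollary~\ref{cor:ballgrowth} at $b$), and the parent $w$ of $p_3$ lies in $H$ by Corollary~\ref{cor:branch}(ii). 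One then needs $c(w)\le c(b)+1$.
\end{itemize}
The paper obtains $c(w)\le c(b)+1$ from $p_3\in S$, which holds because it assumes $|S|\ge4=m$. Since you aim at $m\le3$ for every labeling with $|S|\ge2$, you cannot use this. You would need another source, for instance intervalhood (Corollary~\ref{cor:interval}): $c(\Ball{a}{2})$ being an interval forces the sibling of $a$ to carry $c(a)-1$, and $c(\Ball{a}{3})$ being an interval then forces $c(w)=c(a)+4=c(b)+1$. None of these arguments appears in your proposal.
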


\begin{proof}
\smallskip
\textbf{Lower bounds.} If $h=1$ then $\Tkh$ is the star $K_{1,k}$ and its
centre is adjacent to $k\ge2$ leaves; if $h\ge2$ then any vertex at level $h-1$
is adjacent to $k\ge2$ leaves. In either case Lemma~\ref{lem:twoleaves} gives
$\ldc(\Tkh)\ge3$.

For $T^{2,2}$, which has $7$ vertices, label the root by $4$, its two children
by $3$ and $5$, the two children of the vertex labeled $3$ by $1$ and $2$, and
the two children of the vertex labeled $5$ by $6$ and $7$. Writing $z_i$ for
the vertex labeled $i$, a direct application of Lemma~\ref{lem:ball} gives
$S_c(T^{2,2})=\{z_2,z_3,z_4,z_5,z_6\}$. Hence $\ldc(T^{2,2})\ge5$.

\smallskip
\textbf{Upper bounds.} We must show that $|S|\le3$ when $(k,h)\ne(2,2)$, and
that $|S|\le5$ when $(k,h)=(2,2)$. Both assertions hold trivially for labelings
with $|S|\le3$, so let $c$ be a labeling of $T=\Tkh$ with $|S|\ge4$, where
$S=S_c(T)$, and adopt the notation of Theorem~\ref{thm:path} and
Corollaries~\ref{cor:ballgrowth} and \ref{cor:branch}. We derive a
contradiction in every case except $(k,h)=(2,2)$, where we obtain $|S|\le5$.
By Theorem~\ref{thm:path}(i) we have $S\subseteq V(P_{ab})$, so
\begin{equation}
  m=|V(P_{ab})|\ \ge\ |S|\ \ge\ 4 .
  \label{eq:mfour}
\end{equation}
If $h=1$ then $\diam(T)=2$ and $m\le3$, contradicting \eqref{eq:mfour}; so
$h\ge2$. By \eqref{eq:mfour} we may apply Corollary~\ref{cor:ballgrowth} with
$t=1$ and with $t=2$, obtaining
\begin{equation}
  1+\deg(a)=|\Ball{a}{1}|\le4,
  \qquad
  |\Ball{a}{2}|\le6,
  \label{eq:balls}
\end{equation}
and, by the symmetric statement of Corollary~\ref{cor:ballgrowth}, the same two
inequalities for $b$.

\smallskip
\emph{Case 1: $k\ge3$.} The degrees occurring in $T$ are $1$, $k$ and $k+1$, so
the first inequality in \eqref{eq:balls} forces $a$ to be a leaf, or else $k=3$
and $a$ to be the root. The latter is impossible: for $k=3$ and $h\ge2$ the
root satisfies $|\Ball{a}{2}|=1+3+9=13>6$. Hence $a$, and likewise $b$, is a
leaf. Now all leaves of $T$ lie at level $h$, so
$d(a,b)=2\bigl(h-\text{level of the lowest common ancestor}\bigr)$ is even and
$m=d(a,b)+1$ is odd; with \eqref{eq:mfour} this yields $m\ge5$. But then the
index $i=3$ satisfies $3\le i\le m-2$, so $\deg(p_3)=2$ by
Corollary~\ref{cor:branch}(iii), contradicting the degree observation of the
first paragraph of this section, since $k\ge3$.

\smallskip
\emph{Case 2: $k=2$.} By that same observation the root is the only vertex of
degree $2$, so Corollary~\ref{cor:branch}(iii) shows that $\{3,4,\dots,m-2\}$
contains at most one index, whence
\begin{equation}
  m\le5 .
  \label{eq:mfive}
\end{equation}
In particular $|S|\le m\le5$. This is exactly what was required when $h=2$, and
together with $\ldc(T^{2,2})\ge5$ it gives $\ldc(T^{2,2})=5$. We may therefore
assume $h\ge3$ and aim for a contradiction.

We first locate $a$ and $b$. For $v\in V(T)$ at level $j$ we compute
$|\Ball{v}{2}|$: it equals $4$ if $v$ is a leaf (the parent, the sibling and the
grandparent, the last existing because $h\ge3$), it equals $6$ if $j=h-1$ (the
$2$ children, the parent, the sibling and the grandparent, again existing
because $h\ge3$), and it is at least $1+2+4=7$ if $j\le h-2$, since then $v$ has
two children each having two children. By the second inequality in
\eqref{eq:balls}, each of $a$ and $b$ is therefore a leaf or a vertex at level
$h-1$. All leaves lie at level $h$, so in either case the level of $a$ and the
level of $b$ determine the parity of $d(a,b)$, and we distinguish two cases
accordingly.

\smallskip
\emph{Case 2a: $a$ and $b$ are both leaves, or both at level $h-1$.} Then
$d(a,b)$ is even, so $m$ is odd; by \eqref{eq:mfour} and \eqref{eq:mfive},
$m=5$, and Corollary~\ref{cor:branch}(iii) applied to $i=3$ shows that $p_3$ is
the root. Hence $a$ is at distance $2$ from the root, i.e.\ at level $2$. If
$a$ is a leaf this gives $h=2$, contrary to $h\ge3$. If $a$ is at level $h-1$
this gives $h=3$; but then Corollary~\ref{cor:ballgrowth} applies with
$t=3\le m-2$ and yields $|\Ball{a}{3}|\le8$, whereas in $T^{2,3}$ a vertex $a$
at level $2$ satisfies
\[
\begin{aligned}
  |\Ball{a}{3}|
  & =\underbrace{1}_{a}+\underbrace{3}_{\text{2 children, parent}}
    +\underbrace{2}_{\text{root, sibling}}\\
  &\quad+\underbrace{3}_{\text{2 nephews, other child of the root}}=9>8,
\end{aligned}
\]
a contradiction.

\smallskip
\emph{Case 2b: one of $a,b$ is a leaf and the other lies at level $h-1$.} Then
$d(a,b)$ is odd, so $m$ is even, and \eqref{eq:mfour} and \eqref{eq:mfive} give
$m=4$. By Proposition~\ref{prop:basic}(ii) we may assume that $a$ is the leaf,
so that $a$ lies at level $h$. Its neighbor $p_2$ on $P_{ab}$ is therefore its
parent, at level $h-1$. The vertex $p_3$ cannot be a sibling of $a$, since such
a vertex is a leaf and $p_4$ would then not exist; hence $p_3$ is the
grandparent of $a$, at level $h-2$, and $b$ is the child of $p_3$ other than
$p_2$, at level $h-1$ as required. Thus
\[
  P_{ab}:\quad
  a\ (\text{level } h)
  \ -\ p_2\ (\text{level } h-1)
  \ -\ p_3\ (\text{level } h-2)
  \ -\ b\ (\text{level } h-1),
\]
and by Theorem~\ref{thm:path}(ii) we have $c(p_i)=c(a)+i-1$ for $i=1,2,3,4$.
Moreover $|S|\ge4=m$ and $S\subseteq V(P_{ab})$ by Theorem~\ref{thm:path}(i), so
$S=V(P_{ab})$; in particular
\[
  p_3\in S .
\]

The vertex $b$ has degree $3$: its neighbors are $p_3$ and its two children
$x,y$, which are leaves not lying on $P_{ab}$. Corollary~\ref{cor:ballgrowth}
applied to $b$ with $t=1$ gives
$\Ball{b}{1}\subseteq\{p_4,p_3\}\cup c^{-1}(\{c(b)+1,c(b)+2\})$, and since
$|\Ball{b}{1}|=4$ we conclude
\begin{equation}
  \{c(x),c(y)\}=\{c(b)+1,\ c(b)+2\}.
  \label{eq:xy}
\end{equation}
Since $h\ge3$, the vertex $p_3$ lies at level $h-2\ge1$ and therefore has a
parent $w$, which is its unique neighbor off $P_{ab}$. By
Corollary~\ref{cor:branch}(ii) applied with $i=3$ we get $w\in H$, that is,
\begin{equation}
  c(w)>c(b).
  \label{eq:wabove}
\end{equation}
On the other hand $p_3\in S$ as just observed, and $a\notin\Ball{p_3}{1}$
with $c(p_3,a)=2$; Lemma~\ref{lem:ball} applied to $p_3$ with radius $1$
therefore gives $c(p_3,w)\le2$, that is $c(w)\le c(p_3)+2=c(a)+4=c(b)+1$.
Combined with \eqref{eq:wabove} this forces $c(w)=c(b)+1$, contradicting
\eqref{eq:xy}, because $d(b,w)=2$ while $d(b,x)=d(b,y)=1$, so that $w$ is
distinct from both $x$ and $y$.

\smallskip
In all cases we have reached a contradiction, so $|S|\le3$ whenever $h\ge1$ and
$(k,h)\ne(2,2)$. This completes the proof.
\end{proof}

%% file: spiders.tex
\section{Spiders}
\label{sec:spiders}

A \emph{spider} is a tree $G$ with exactly one vertex $w$ of degree greater
than $2$; that vertex is the \emph{body} of the spider, and the components of
$G-w$, each of which is a path, are its \emph{legs}. We write
$G=SP(w;\ell_1,\dots,\ell_k)$ for the spider with body $w$ and $k\ge3$ legs
having $\ell_1\le\ell_2\le\dots\le\ell_k$ vertices, and we denote by $v_{i,j}$
the vertex of the $i$th leg with $d(v_{i,j},w)=j$, so that
$1\le j\le\ell_i$ and
\[
  n=n(G)=1+\sum_{i=1}^{k}\ell_i .
\]
If $\ell_k=1$ then $G$ is the star $K_{1,k}$ and $\ldc(G)=3$ by
\cite[Proposition 4.4]{CH2025}; we therefore assume $\ell_k\ge2$ throughout,
unless stated otherwise.

\subsection{The natural labeling}

Fix $i$ with $1\le i\le k$. The \emph{natural labeling relative to the $i$th
leg} is the labeling $c_i$ of $G$ defined by
\[
  c_i(v_{i,j})=\ell_i-j+1 \quad(1\le j\le\ell_i),
  \qquad
  c_i(w)=\ell_i+1,
\]
the remaining vertices --- those of the legs other than the $i$th --- receiving
the labels $\ell_i+2,\dots,n$ in any order such that $c_i(x)<c_i(y)$ whenever
$d(w,x)<d(w,y)$. Thus $c_i$ enumerates the $i$th leg backwards from its leaf,
then $w$, and then sweeps the remaining legs level by level away from $w$.

For $s\ge0$ we put
\[
  N^{(i)}_s=\sum_{i'\ne i}\min\{\ell_{i'},s\},
\]
the number of vertices outside the $i$th leg at distance at most $s$ from $w$;
so $N^{(i)}_0=0$ and $N^{(i)}_s\ge k-1$ for $s\ge1$. The following lemma
determines completely which vertices of the $i$th leg are consistent under
$c_i$. It replaces, in a single statement, the several separate computations
that this kind of argument would otherwise require.

\begin{lemma}
\label{lem:natural}
Let $G=SP(w;\ell_1,\dots,\ell_k)$ with $k\ge3$, let $1\le i\le k$ and let
$1\le j\le\ell_i$. Then $v_{i,j}\in S_{c_i}(G)$ if and only if
\begin{equation}
  N^{(i)}_s\le s+1
  \qquad\text{for every } s \text{ with } 1\le s\le \ell_i-2j-1 .
  \label{eq:natcond}
\end{equation}
\end{lemma}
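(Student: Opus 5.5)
We verify the ball criterion of Lemma~\ref{lem:ball} for $u=v_{i,j}$ directly, since every label distance $c_i(u,x)$ can be written down explicitly. Put $\alpha=c_i(u)=\ell_i-j+1$. The vertex $v_{i,j'}$ with $j'>j$ (toward the leaf) lies at distance $j'-j$ from $u$ and has label distance $j'-j$. The vertex $v_{i,j'}$ with $j'<j$ (toward $w$) lies at distance $j-j'$ and has label distance $j-j'$. The body $w$ lies at distance $j$ and has label distance $j$. A vertex of another leg at distance $s\ge1$ from $w$ lies at distance $j+s$ from $u$. Because $c_i$ sweeps the other legs level by level, its label distance lies in $[\,j+N^{(i)}_{s-1}+1,\ j+N^{(i)}_s\,]$, and as $x$ ranges over that level these values fill the whole interval. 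So $u$ sits on a path whose labels agree with distances, and the only possible defect comes from the other legs.

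Fix a radius $r\ge0$ and compute both sides of \eqref{eq:ballcond}. For $r<j$ the ball $\Ball{u}{r}$ lies inside the $i$th leg. Its maximal label distance is at most $r$, and every vertex outside the ball has label distance at least $r+1$, so the condition always holds. For $r\ge j$ write $s=r-j\ge0$. Then:
\begin{itemize}
\item the inside maximum is $\max\{\min(r,\ell_i-j),\ j+N^{(i)}_s\}$;
\item the outside vertices are the leaf-side vertex at distance $r+1$, which exists iff $r+1\le \ell_i-j$ and has label distance $r+1$, together with the other-leg vertices at level $s+1$, which exist iff some $\ell_{i'}\ge s+1$ and have minimal label distance $j+N^{(i)}_s+1$.
\end{itemize}

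This gives four comparisons. Two are trivial: $\min(r,\ell_i-j)\le r+1$, and $j+N^{(i)}_s\le j+N^{(i)}_s+1$. The third is $\min(r,\ell_i-j)\le j+N^{(i)}_s+1$, required only when level $s+1$ of some other leg exists. That leg then has length at least $s+1$ and contributes $s$ to $N^{(i)}_s$, so $j+N^{(i)}_s+1\ge j+s+1>r$ and this comparison is automatic as well.

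The remaining comparison is the real one: $j+N^{(i)}_s\le r+1$, that is, $N^{(i)}_s\le s+1$. It is required exactly when the leaf-side vertex at distance $r+1$ exists, i.e.\ when $j+s+1\le\ell_i-j$, which means $s\le\ell_i-2j-1$. For $s=0$ it reads $0\le1$ and holds trivially. Hence, by Lemma~\ref{lem:ball}, $u$ is consistent if and only if $N^{(i)}_s\le s+1$ for $1\le s\le\ell_i-2j-1$, which is \eqref{eq:natcond}.

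The main obstacle is bookkeeping rather than an idea. Three points need care: the values of $N^{(i)}_s$ when some other legs have ended before level $s$; the precise existence conditions for the outside vertices, including the convention $\min\varnothing=+\infty$ once $r\ge\ecc(u)$; and the observation above that a nonempty level $s+1$ forces $N^{(i)}_s\ge s$. To keep these straight, I would present the computation as a short table indexed by the position of $r$ relative to $j$ and to $\ell_i-j$.
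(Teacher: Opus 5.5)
Your proof is correct and follows essentially the paper's route: both apply Lemma~\ref{lem:ball} to $v_{i,j}$ and reduce, for $r\ge j$, to two nontrivial comparisons. The first pits the leaf-side inside term against the first other-leg outside label distance; it is automatic because a leg reaching level $s+1$ contributes $s$ to $N^{(i)}_s$. The second is $j+N^{(i)}_s\le r+1$, required exactly when $s\le\ell_i-2j-1$, which is \eqref{eq:natcond}.

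The paper phrases the same computation through the label interval $[\alpha_r,\beta_r]$ of the ball, and its \eqref{eq:condB} and \eqref{eq:condA} are your third and fourth comparisons. Your bound $\min\{r,\ell_i-j\}\le r$ spares the paper's case split in checking \eqref{eq:condB}. You should state explicitly that outside vertices beyond distance $r+1$ have larger label distances, so that listing only the nearest outside vertices suffices for the minimum.
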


\begin{proof}
Write $u=v_{i,j}$, $\lambda=c_i(u)=\ell_i-j+1$ and $N_s=N^{(i)}_s$, and note
that $c_i(w)=\ell_i+1=\lambda+j$.

\smallskip
\noindent\emph{The labels of the balls about $u$.}
Moving along the $i$th leg away from $w$ decreases the label by $1$ at each step
until the label $1$ is reached, and moving towards $w$ increases it by $1$ at
each step until $w$ is reached; thereafter the ball absorbs the remaining legs
level by level, and by the definition of $c_i$ these vertices carry the next
labels in increasing order. Hence $c_i(\Ball{u}{r})=[\alpha_r,\beta_r]$, where
\[
  \alpha_r=\max\{1,\ \lambda-r\},
  \qquad
  \beta_r=\begin{cases}
    \lambda+r, & 0\le r\le j-1,\\
    \lambda+j+N_{r-j}, & r\ge j .
  \end{cases}
\]
The label distances from $u$ to the vertices of $\Ball{u}{r}$ are therefore the
integers between $0$ and $\max\{\lambda-\alpha_r,\ \beta_r-\lambda\}$, while the
vertices outside $\Ball{u}{r}$ carry the labels
$[1,\alpha_r-1]\cup[\beta_r+1,n]$, whose label distances to $u$ are at least
$\lambda-\alpha_r+1$ and $\beta_r+1-\lambda$ respectively. By
Lemma~\ref{lem:ball}, $u$ is consistent if and only if for every $r\ge0$
\begin{align}
  \beta_r-\lambda &\le \lambda-\alpha_r+1 &&\text{whenever } \alpha_r\ge2,
  \label{eq:condA}\\
  \lambda-\alpha_r &\le \beta_r+1-\lambda &&\text{whenever } \beta_r\le n-1 .
  \label{eq:condB}
\end{align}

\smallskip
\noindent\emph{Condition \eqref{eq:condB} always holds.}
If $r\le j-1$ then $\lambda-\alpha_r=\min\{r,\lambda-1\}\le r\le
\beta_r+1-\lambda$. Let $r\ge j$ and put $s=r-j$, so that
$\beta_r-\lambda=j+N_s$. Assume $\beta_r\le n-1$; then some leg other than the
$i$th contains a vertex at distance more than $s$ from $w$, so that leg
contributes exactly $s$ to $N_s$ and
\begin{equation}
  N_s\ge s .
  \label{eq:Nsges}
\end{equation}
If $\lambda-\alpha_r=r=j+s$, then \eqref{eq:condB} reads $j+s\le j+N_s+1$, which
holds by \eqref{eq:Nsges}. Otherwise $\alpha_r=1$ and
$\lambda-\alpha_r=\lambda-1=\ell_i-j$, and \eqref{eq:condB} reads
$\ell_i\le 2j+N_s+1$. Since $\alpha_r=1$ we have $r\ge \lambda-1$, that is
$s\ge \ell_i-2j$. If $\ell_i\le 2j$ this gives $\ell_i\le 2j+N_s+1$ at once;
and if $\ell_i>2j$ then $s\ge \ell_i-2j\ge1$, so $N_s\ge s\ge\ell_i-2j$ by
\eqref{eq:Nsges} and again $2j+N_s+1\ge \ell_i+1>\ell_i$.

\smallskip
\noindent\emph{Condition \eqref{eq:condA} is equivalent to \eqref{eq:natcond}.}
If $r\le j-1$ then $\beta_r-\lambda=r$ and $\lambda-\alpha_r+1=r+1$ (as
$\alpha_r\ge2$ forces $\alpha_r=\lambda-r$), so \eqref{eq:condA} holds. Let
$r\ge j$, put $s=r-j$ and assume $\alpha_r\ge2$, that is $r\le \lambda-2$, that
is $0\le s\le \ell_i-2j-1$. Then $\alpha_r=\lambda-r$ and \eqref{eq:condA}
reads $j+N_s\le r+1=j+s+1$, that is $N_s\le s+1$. For $s=0$ this is trivially
true, so \eqref{eq:condA} for all $r$ is exactly \eqref{eq:natcond}.
\end{proof}

Because $N^{(i)}_s\ge k-1$ for $s\ge1$, and because
$N^{(i)}_s=\min\{\ell_{i'},s\}+\min\{\ell_{i''},s\}$ when $k=3$ and
$\{i',i''\}=\{1,2,3\}\setminus\{i\}$, condition \eqref{eq:natcond} depends on
the spider only through the following quantity.

\begin{definition}
\label{def:sigma}
For $1\le i\le k$ put
$\sigma_i=\min\{\,s\ge1 : N^{(i)}_s\ge s+2\,\}$, with $\sigma_i=\infty$ if no
such $s$ exists.
\end{definition}

\begin{lemma}
\label{lem:sigma}
Let $G=SP(w;\ell_1,\dots,\ell_k)$ with $k\ge3$. Then
\[
  \sigma_i=
  \begin{cases}
    1, & \text{if } k\ge4,\\
    \infty, & \text{if } k=3 \text{ and } \ell_{i'}=1 \text{ for some } i'\ne i,\\
    2, & \text{if } k=3 \text{ and } \ell_{i'}\ge2 \text{ for both } i'\ne i .
  \end{cases}
\]
Moreover
\begin{enumerate}
\item[\textup{(i)}] for $1\le j\le\ell_i$ one has $v_{i,j}\in S_{c_i}(G)$ if and
only if $j\ge\bigl\lceil(\ell_i-\sigma_i)/2\bigr\rceil$, so that
\[
  \bigl|S_{c_i}(G)\cap\{v_{i,1},\dots,v_{i,\ell_i}\}\bigr|
  =\min\Bigl\{\ell_i,\ \bigl\lfloor(\ell_i+\sigma_i)/2\bigr\rfloor+1\Bigr\};
\]
\item[\textup{(ii)}] $w\in S_{c_i}(G)$ if and only if $\ell_i\le\sigma_i$;
\item[\textup{(iii)}] consequently
\[
  \bigl|S_{c_i}(G)\cap\bigl(\{w\}\cup\{v_{i,1},\dots,v_{i,\ell_i}\}\bigr)\bigr|=
  \begin{cases}
    \lceil \ell_i/2\rceil+1, & \text{if } \sigma_i=1,\\
    \lfloor \ell_i/2\rfloor+2, & \text{if } \sigma_i=2,\\
    \ell_i+1, & \text{if } \sigma_i=\infty .
  \end{cases}
\]
\end{enumerate}
Here the conventions for $\sigma_i=\infty$ are that the condition in
\textup{(i)} holds for every $j$, that the count in \textup{(i)} equals
$\ell_i$, and that the inequality in \textup{(ii)} holds.
\end{lemma}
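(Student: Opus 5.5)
The plan is to first compute $\sigma_i$ directly from Definition~\ref{def:sigma}, and then read off (i)--(iii) from Lemma~\ref{lem:natural}.

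\emph{Computing $\sigma_i$.} If $k\ge4$, then $N^{(i)}_1=k-1\ge3=1+2$, so $\sigma_i=1$.

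If $k=3$, write $\{i',i''\}=\{1,2,3\}\setminus\{i\}$, so that $N^{(i)}_s=\min\{\ell_{i'},s\}+\min\{\ell_{i''},s\}$.
\begin{itemize}
\item[$\circ$] If some $\ell_{i'}=1$, then $N^{(i)}_s\le 1+s<s+2$ for every $s\ge1$, so $\sigma_i=\infty$.
\item[$\circ$] If $\ell_{i'},\ell_{i''}\ge2$, then $N^{(i)}_1=2<3$ while $N^{(i)}_2=4=2+2$, so $\sigma_i=2$.
\end{itemize}

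\emph{A monotonicity fact.} In every case, the failure set $\{s\ge1: N^{(i)}_s\ge s+2\}$ is upward closed. For $k\ge4$ this holds because $N_s\ge\min$ contributions grow: each of at least three legs contributes $\min\{\ell,s\}$, and this needs care when legs are short. To be safe, I will use only that $\sigma_i$ is the least failing $s$. Condition \eqref{eq:natcond} fails iff some $s\le\ell_i-2j-1$ has $N_s\ge s+2$, which happens iff $\sigma_i\le\ell_i-2j-1$. So upward closure is not actually needed.

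\emph{Proof of (i).} By the previous step, $v_{i,j}\in S_{c_i}(G)$ iff $\ell_i-2j-1<\sigma_i$, i.e.\ iff $2j\ge\ell_i-\sigma_i$, i.e.\ iff $j\ge\lceil(\ell_i-\sigma_i)/2\rceil$. This holds for all $j$ when $\sigma_i=\infty$. The number of admissible $j\in[1,\ell_i]$ is
\[
\ell_i-\max\{1,\lceil(\ell_i-\sigma_i)/2\rceil\}+1=\min\{\ell_i,\ \ell_i+1-\lceil(\ell_i-\sigma_i)/2\rceil\}.
\]
It remains to check that $\ell_i+1-\lceil(\ell_i-\sigma_i)/2\rceil=\lfloor(\ell_i+\sigma_i)/2\rfloor+1$. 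This follows from $\ell_i-\lceil x/2\rceil=\lfloor(2\ell_i-x)/2\rfloor$ with $x=\ell_i-\sigma_i$.

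\emph{Proof of (ii).} This is the one genuinely new computation, since Lemma~\ref{lem:natural} covers only leg vertices. I would redo its ball analysis with $u=w$, $\lambda=\ell_i+1$. Then $c_i(\Ball{w}{r})=[\max\{1,\lambda-r\},\ \lambda+N_r]$. As in the proof of Lemma~\ref{lem:natural}, condition \eqref{eq:condB} is automatic, using $N_r\ge r$ whenever some label above remains. Condition \eqref{eq:condA} at radius $r$ with $\lambda-r\ge2$, i.e.\ $1\le r\le\ell_i-1$, reads $N_r\le r+1$. Thus $w$ is consistent iff no $r\le\ell_i-1$ fails, i.e.\ iff $\sigma_i\ge\ell_i$. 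This differs from the stated ``$\ell_i\le\sigma_i$'' only by notation, so the statement is consistent with it.

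The boundary behaviour at $r=\ell_i-1$ versus $r=\ell_i$ is the main point to verify carefully. It amounts to checking that at $r=\ell_i$ we have $\alpha_r=1$ and \eqref{eq:condA} is vacuous.

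\emph{Proof of (iii).} Add the indicator from (ii) to the count from (i), case by case.
\begin{itemize}
\item[$\circ$] If $\sigma_i=1$: for $\ell_i\ge2$, (i) gives $\lfloor(\ell_i+1)/2\rfloor+1=\lceil\ell_i/2\rceil+1$, and $w$ is not counted since $\ell_i>1$. For $\ell_i=1$, (i) gives $1$ and $w$ is counted, giving $2=\lceil1/2\rceil+1$.
\item[$\circ$] If $\sigma_i=2$: for $\ell_i\ge3$ the total is $\lfloor\ell_i/2\rfloor+2$. For $\ell_i\le2$ the total is $\ell_i+1$, which again equals $\lfloor\ell_i/2\rfloor+2$ for $\ell_i\in\{1,2\}$.
\item[$\circ$] If $\sigma_i=\infty$: the total is $\ell_i+1$.
\end{itemize}
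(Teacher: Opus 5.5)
Your proposal is correct and follows the paper's proof essentially step for step. You compute $\sigma_i$ from $N^{(i)}_1$ and $N^{(i)}_2$, read off (i) from Lemma~\ref{lem:natural} using only the minimality of $\sigma_i$ (which makes the paper's extra appeal to $\sigma_i\le 2$ unnecessary), rerun the ball analysis at $w$ with $\beta_r=\lambda+N^{(i)}_r$ for (ii), and add case by case for (iii). Your aside that $\{s\ge1: N^{(i)}_s\ge s+2\}$ is upward closed is false: $N^{(i)}_s$ is eventually constant, so this set is finite. Since you explicitly avoid using it, the argument is unaffected.
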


\begin{proof}
If $k\ge4$ then $N^{(i)}_1=k-1\ge3=1+2$, so $\sigma_i=1$. Let $k=3$ and
$\{i',i''\}=\{1,2,3\}\setminus\{i\}$. If $\ell_{i'}=1$ then
$N^{(i)}_s=1+\min\{\ell_{i''},s\}\le s+1$ for every $s\ge1$, so
$\sigma_i=\infty$. If $\ell_{i'},\ell_{i''}\ge2$ then $N^{(i)}_1=2<3$ while
$N^{(i)}_2=4\ge4$, so $\sigma_i=2$.

\smallskip
\noindent(i) Condition \eqref{eq:natcond} says that no $s$ with
$1\le s\le \ell_i-2j-1$ satisfies $N^{(i)}_s\ge s+2$. When $\sigma_i=\infty$
this is vacuous, and otherwise, since $\sigma_i\le2$, it is equivalent to
$\ell_i-2j-1<\sigma_i$, that is to $j\ge\lceil(\ell_i-\sigma_i)/2\rceil$. The
admissible $j$ are therefore those with
$\max\{1,\lceil(\ell_i-\sigma_i)/2\rceil\}\le j\le\ell_i$, and since
$\ell_i-\lceil(\ell_i-\sigma_i)/2\rceil=\lfloor(\ell_i+\sigma_i)/2\rfloor$ their
number is $\min\{\ell_i,\lfloor(\ell_i+\sigma_i)/2\rfloor+1\}$, as claimed.

\smallskip
\noindent The truncation in (i) is not vacuous: for $G=SP(w;2,2,2)$ and any $i$
we have $\sigma_i=2=\ell_i$, and the $i$th leg contains only two vertices, both
of them consistent, whereas $\lfloor(\ell_i+\sigma_i)/2\rfloor+1=3$.

\smallskip
\noindent(ii) Put $\lambda=c_i(w)=\ell_i+1$. For $r\ge0$ we have
$\Ball{w}{r}=\{w\}\cup\{v_{i',t}:1\le i'\le k,\ 1\le t\le\min\{\ell_{i'},r\}\}$,
whose labels run from $\max\{1,\ell_i-r+1\}$ down the $i$th leg and up to
$\ell_i+1+N^{(i)}_r$ on the remaining legs; thus
$c_i(\Ball{w}{r})=[\alpha_r,\beta_r]$ with $\alpha_r=\max\{1,\lambda-r\}$ and
$\beta_r=\lambda+N^{(i)}_r$. Exactly as in the proof of
Lemma~\ref{lem:natural}, condition \eqref{eq:condB} holds always: if
$\beta_r\le n-1$ then some leg other than the $i$th has more than $r$ vertices
and contributes $r$ to $N^{(i)}_r$, so
$\lambda-\alpha_r=\min\{r,\ell_i\}\le r\le N^{(i)}_r+1=\beta_r+1-\lambda$.
Condition \eqref{eq:condA} is required precisely when $\alpha_r\ge2$, that is
when $r\le\ell_i-1$, and then reads $N^{(i)}_r\le r+1$. Hence $w$ is consistent
if and only if no $r$ with $1\le r\le\ell_i-1$ satisfies $N^{(i)}_r\ge r+2$,
that is if and only if $\ell_i-1<\sigma_i$.

\smallskip
\noindent(iii) Add (i) and (ii) in each of the three cases. The resulting
values are
\[
\begin{array}{c@{\quad}l}
\sigma_i=1 &
  \min\{\ell_i,\lceil\ell_i/2\rceil+1\}+[\ell_i=1]
    =\lceil\ell_i/2\rceil+1,\\
\sigma_i=2 &
  \min\{\ell_i,\lfloor\ell_i/2\rfloor+2\}+[\ell_i\le2]
    =\lfloor\ell_i/2\rfloor+2,\\
\sigma_i=\infty & \ell_i+1.
\end{array}
\]
\end{proof}

Lemma~\ref{lem:sigma} accounts for the $i$th leg and for the body. When some
leg consists of a single vertex, that vertex may be consistent as well, and we
record now when this happens; the statement will be used both for $k=3$ in
Section~\ref{sec:sp3short} and for $k\ge4$ in Section~\ref{sec:spexc}.

Let $i_0$ be a leg with $\ell_{i_0}=1$ and let $i_1\ne i_0$. Among the natural
labelings relative to the leg $i_1$ we single out the one, written
$c_{i_1}^{\,i_0}$, whose tie-breaking order among the remaining legs puts
$v_{i_0,1}$ first, so that
\[
  c_{i_1}^{\,i_0}(v_{i_0,1})=\ell_{i_1}+2 .
\]
Finally we put, for $t\ge0$,
\begin{equation}
  \Ntwo_t=\sum_{i\notin\{i_0,i_1\}}\min\{\ell_i,t\} ,
  \label{eq:Mt}
\end{equation}
so that $\Ntwo_t$ counts the vertices at distance at most $t$ from $w$ lying on
legs other than $i_0$ and $i_1$; in particular $\Ntwo_1=k-2$. Note that $\Ntwo_t$ is
the analogue of $N^{(i)}_t$ with two legs removed instead of one.

\begin{lemma}
\label{lem:leafconsistent}
With the notation above, and for any $k\ge3$, we have
$v_{i_0,1}\in S_c(G)$ for $c=c_{i_1}^{\,i_0}$ if and only if
\begin{equation}
  \Ntwo_t\le t+2 \qquad\text{for every } t \text{ with } 1\le t\le \ell_{i_1}-1 .
  \label{eq:leafcond}
\end{equation}
\end{lemma}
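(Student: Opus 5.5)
Write $u=v_{i_0,1}$ and $\lambda=c(u)=\ell_{i_1}+2$. I will follow the template of Lemma~\ref{lem:natural}: compute the label sets of the balls $\Ball{u}{r}$ explicitly, show that they are intervals $[\alpha_r,\beta_r]$, and then apply Lemma~\ref{lem:ball} in the split form \eqref{eq:condA}--\eqref{eq:condB}.

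\emph{Computing the balls.} We have $\Ball{u}{0}=\{u\}$, and $\Ball{u}{1}=\{u,w\}$ with labels $\{\ell_{i_1}+1,\ell_{i_1}+2\}$. For $r\ge 2$, put $t=r-1$. Then $\Ball{u}{r}$ consists of $u$, $w$, the vertices $v_{i_1,1},\dots,v_{i_1,\min\{t,\ell_{i_1}\}}$, and all vertices on the other legs at distance at most $t$ from $w$. Under $c$ the $i_1$-leg vertices carry labels $\ell_{i_1},\ell_{i_1}-1,\dots$ downward. The other legs are swept level by level starting right after $u$, since $u$ was placed first in the tie-break. Hence
\[
  c(\Ball{u}{r})=[\alpha_r,\beta_r],\qquad
  \alpha_r=\max\{1,\ell_{i_1}+1-t\},\qquad
  \beta_r=\ell_{i_1}+2+\Ntwo_t .
\]
In particular $\lambda-\alpha_r=\min\{t+1,\ell_{i_1}+1\}$ and $\beta_r-\lambda=\Ntwo_t$. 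The level-by-level claim needs a short check: the vertices of level at most $t$ on legs other than $i_0,i_1$ occupy exactly the $\Ntwo_t$ labels following $\lambda$.

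\emph{Condition \eqref{eq:condB}.} This asks for $\lambda-\alpha_r\le\beta_r+1-\lambda$ whenever $\beta_r\le n-1$. At radii $0$ and $1$ it is immediate. For $r\ge2$, $\beta_r\le n-1$ means some leg other than $i_0,i_1$ is longer than $t$, so $\Ntwo_t\ge t$, and then $\min\{t+1,\ell_{i_1}+1\}\le t+1\le\Ntwo_t+1$. So \eqref{eq:condB} always holds.

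\emph{Condition \eqref{eq:condA}.} This asks for $\beta_r-\lambda\le\lambda-\alpha_r+1$ whenever $\alpha_r\ge2$.
\begin{itemize}
\item At $r=0$ it is trivial.
\item At $r=1$ it reads $-1\le 2$, which holds.
\item For $r\ge2$, $\alpha_r\ge2$ means $t\le\ell_{i_1}-1$, and then $\alpha_r=\ell_{i_1}+1-t$. The condition becomes $\Ntwo_t\le t+2$.
\end{itemize}
So \eqref{eq:condA} for all $r$ is exactly \eqref{eq:leafcond}, and Lemma~\ref{lem:ball} gives the equivalence.

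The main obstacle is the bookkeeping at the boundary, where the $i_1$ leg is exhausted and $\alpha_r$ hits $1$. This must be done carefully enough to confirm that the range of $t$ in \eqref{eq:leafcond} is exactly $1\le t\le\ell_{i_1}-1$. The degenerate case $k=3$ with $\Ntwo_t=\min\{\ell_{i_2},t\}$ should be rechecked but needs no separate argument.
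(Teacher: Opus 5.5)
Your proposal is correct and is essentially the paper's proof. It uses the same ball labels $c(\Ball{u}{r})=[\max\{1,\lambda-r\},\ \lambda+\Ntwo_{r-1}]$, the same observation that \eqref{eq:condB} is automatic because $\beta_r\le n-1$ forces $\Ntwo_{r-1}\ge r-1$, and the same reduction of \eqref{eq:condA} to $\Ntwo_t\le t+2$ for $1\le t\le\ell_{i_1}-1$. One harmless slip: at $r=1$ you have $\beta_1=c(u)=\lambda$, so \eqref{eq:condA} reads $0\le 2$ rather than $-1\le 2$; this case is just $t=0$ of your general formula, since $\Ntwo_0=0$.
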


\begin{proof}
Write $u=v_{i_0,1}$ and $\lambda=c(u)=\ell_{i_1}+2$, so that
$c(w)=\lambda-1$ and $c(v_{i_1,j})=\lambda-1-j$ for $1\le j\le\ell_{i_1}$.
Since $\ell_{i_0}=1$, the only neighbor of $u$ is $w$, whence
$\Ball{u}{r}=\{u\}\cup \Ball{w}{r-1}$ for $r\ge1$. The vertices of
$\Ball{w}{r-1}$ on the leg $i_1$ carry the labels
$\lambda-2,\lambda-3,\dots$ down to $\max\{1,\lambda-r\}$, and those on the
remaining legs carry, by the choice of tie-breaking and the definition of the
natural labeling, the labels $\lambda,\lambda+1,\dots,\lambda+\Ntwo_{r-1}$. Hence
$c(\Ball{u}{r})=[\alpha_r,\beta_r]$ with
\[
  \alpha_r=\max\{1,\ \lambda-r\},\qquad \beta_r=\lambda+\Ntwo_{r-1}.
\]
Exactly as in the proof of Lemma~\ref{lem:natural}, Lemma~\ref{lem:ball} says
that $u$ is consistent if and only if \eqref{eq:condA} and \eqref{eq:condB}
hold for every $r\ge0$, with these values of $\alpha_r$ and $\beta_r$.

Condition \eqref{eq:condB} always holds. Indeed
$\lambda-\alpha_r=\min\{r,\lambda-1\}$ and $\beta_r+1-\lambda=\Ntwo_{r-1}+1$. If
$\beta_r\le n-1$ then some leg $i\notin\{i_0,i_1\}$ contains a vertex at
distance more than $r-1$ from $w$, so $\ell_i>r-1$ and that leg contributes
$r-1$ to $\Ntwo_{r-1}$; hence $\Ntwo_{r-1}+1\ge r\ge\min\{r,\lambda-1\}$.

Condition \eqref{eq:condA} reads $\Ntwo_{r-1}\le r+1$ and is required exactly when
$\alpha_r=\lambda-r\ge2$, that is $r\le \lambda-2=\ell_{i_1}$. Setting
$t=r-1$ and noting that $\Ntwo_0=0$ makes the case $t=0$ vacuous, this is
\eqref{eq:leafcond}.
\end{proof}

\begin{corollary}
\label{cor:spiderlower}
Let $G=SP(w;\ell_1,\dots,\ell_k)$ with $k\ge3$ and $\ell_k\ge2$. Then
\[
  \ldc(G)\ \ge\
  \begin{cases}
    \lceil \ell_k/2\rceil+1, & \text{if } k\ge4,\\
    \lfloor \ell_k/2\rfloor+2, & \text{if } k=3 \text{ and } \ell_1\ge2 .
  \end{cases}
\]
\end{corollary}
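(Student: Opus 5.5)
The bound follows by applying the natural labeling $c_i$ to the longest leg, $i=k$, and counting consistent vertices with Lemma~\ref{lem:sigma}(iii). That count covers only the body $w$ and the vertices of the $k$th leg. Since $\ldc(G)\ge|S_{c_k}(G)|$, any lower bound on this partial count is a lower bound on $\ldc(G)$.

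First determine $\sigma_k$ in each regime. If $k\ge4$, Lemma~\ref{lem:sigma} gives $\sigma_k=1$. Part (iii) then yields
\[
\bigl|S_{c_k}(G)\cap(\{w\}\cup\{v_{k,1},\dots,v_{k,\ell_k}\})\bigr|=\lceil\ell_k/2\rceil+1 .
\]
If $k=3$ and $\ell_1\ge2$, then $\ell_2\ge\ell_1\ge2$, so both legs other than the third have at least two vertices. Hence $\sigma_3=2$, and part (iii) gives $\lfloor\ell_3/2\rfloor+2=\lfloor\ell_k/2\rfloor+2$.

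There is no real obstacle. One point needs checking: the natural labeling $c_k$ exists for any choice of tie-breaking among the other legs, and Lemma~\ref{lem:sigma} holds for every such choice. This is so because Lemma~\ref{lem:natural} and part (ii) of Lemma~\ref{lem:sigma} use only the level-by-level ordering. The hypothesis $\ell_k\ge2$ is not used in the counting itself; it only places us in the spider setting fixed at the start of the section, so the corollary follows immediately.
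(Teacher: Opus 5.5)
Your proposal is correct and matches the paper's proof: apply Lemma~\ref{lem:sigma}(iii) to the natural labeling $c_k$, using $\sigma_k=1$ when $k\ge4$ and $\sigma_k=2$ when $k=3$ and $\ell_1\ge2$. Because you count the body $w$ together with the $k$th leg, your argument also covers the case $SP(w;2,2,2)$, where the paper remarks that the body is needed.
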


\begin{proof}
Apply Lemma~\ref{lem:sigma}(iii) with $i=k$: in the first case $\sigma_k=1$ and
in the second $\sigma_k=2$, since then both legs other than the $k$th have at
least two vertices. Note that the body $w$ is needed here when
$\sigma_k=2=\ell_k$, that is for $G=SP(w;2,2,2)$, where the $k$th leg alone
supplies only two consistent vertices.
\end{proof}

\subsection{Where the path can lie}

We now turn to upper bounds. Throughout this subsection $c$ is a labeling of
$G=SP(w;\ell_1,\dots,\ell_k)$ with $|S|\ge2$, where $S=S_c(G)$, and we keep the
notation $a$, $b$, $P_{ab}=p_1\cdots p_m$ of Theorem~\ref{thm:path}, so that
$S\subseteq V(P_{ab})$ and $|S|\le m$.

\begin{lemma}
\label{lem:bodypos}
\begin{enumerate}
\item[\textup{(a)}] Every path $P$ of $G$ is of exactly one of the following
three kinds: a subpath of a single leg; the body $w$ together with an initial
segment $v_{i,1},\dots,v_{i,r}$ of one leg; or an initial segment of a leg
$i_0$, followed by $w$, followed by an initial segment of a second leg
$i_1\ne i_0$.
\item[\textup{(b)}] If $m\ge5$ and $w\in V(P_{ab})$, then
$w\in\{p_1,p_2,p_{m-1},p_m\}$.
\end{enumerate}
\end{lemma}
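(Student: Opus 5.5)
For (a), I will argue directly from the structure of $G$. Every vertex other than $w$ has degree at most $2$, and the legs are the components of $G-w$, each of them a path. Let $P$ be a path in $G$.

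If $w\notin V(P)$, then $P$ is a connected subgraph of $G-w$. It therefore lies in a single leg, and it is a subpath of that leg.

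If $w\in V(P)$, then $w$ has either one or two neighbours on $P$ (exactly one when $w$ is an endpoint and $P$ has at least two vertices, and none when $P=w$, which we count as the body with an empty initial segment, or simply as the second kind with $r=0$; I will state this convention explicitly). Removing $w$ from $P$ leaves at most two subpaths. Each of them lies in a leg, contains the neighbour $v_{i,1}$ of $w$ in that leg, and is connected. Hence each is an initial segment $v_{i,1},\dots,v_{i,r}$. If there are two such pieces, they lie in distinct legs, because a leg contains only one neighbour of $w$. The three kinds are mutually exclusive because they are distinguished by whether $w\in V(P)$ and by how many neighbours of $w$ the path contains.

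For (b), the plan is to combine Corollary~\ref{cor:branch}(iii) with the degree of the body. Suppose $m\ge5$ and $w=p_i$. We must show that $3\le i\le m-2$ is impossible. For such $i$, Corollary~\ref{cor:branch}(iii) gives $\deg_G(p_i)=2$. But $\deg_G(w)=k\ge3$, a contradiction. Hence $i\in\{1,2,m-1,m\}$. The hypothesis $m\ge5$ is only used to make sure that the middle range is nonempty; the argument itself is valid for any $m$, and for small $m$ the conclusion is trivial.

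There is no real obstacle in either part. The only care needed is the bookkeeping in (a): the degenerate path $P=w$ must be covered, which I will do by allowing $r=0$ in the second kind, and I must check that the kinds do not overlap.
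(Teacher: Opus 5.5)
Your proposal is correct and follows the paper's proof essentially verbatim. In (a) you split on whether $w\in V(P)$ and identify the components of $P-w$ as initial segments of distinct legs; in (b) you set Corollary~\ref{cor:branch}(iii) against $\deg_G(w)=k\ge3$. You also treat the one-vertex path $P=w$ explicitly (via $r=0$), which the paper's proof passes over when it asserts that $P-w$ has one or two components.
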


\begin{proof}
(a) If $w\notin V(P)$, then $P$ avoids the only vertex joining the legs to one
another, so $P$ lies inside a single component of $G-w$, that is inside one leg.
If $w\in V(P)$, then $P-w$ has one or two components, each a path contained in
a single leg and having $w$ as a neighbor of one of its ends; a subpath of a
leg one of whose ends is adjacent to $w$ is an initial segment
$v_{i,1},\dots,v_{i,r}$. According as $P-w$ has one or two components we
obtain the second or the third kind.

(b) If $w=p_r$ with $3\le r\le m-2$, then $\deg_G(w)=2$ by
Corollary~\ref{cor:branch}(iii), contradicting $\deg_G(w)=k\ge3$.
\end{proof}

\begin{lemma}
\label{lem:bodyend}
Assume $w\in\{p_1,p_m\}$. If $k\ge4$ then $m\le2$, and if $k=3$ and
$\ell_1\ge2$ then $m\le3$.
\end{lemma}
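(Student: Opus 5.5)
By Proposition~\ref{prop:basic}(ii), reversing $c$ swaps the roles of $a$ and $b$ and traverses $P_{ab}$ in the opposite direction without changing $S$. So I may assume $w=p_1=a$. The plan is to argue by contradiction: if $m$ were larger than claimed, Corollary~\ref{cor:ballgrowth} would apply at $a=w$ with a suitable radius $t\le m-2$. The balls about the body of a spider are too large for its bound $|\Ball{a}{t}|\le 2t+2$.

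\emph{Case $k\ge4$.} Suppose $m\ge3$. Then $t=1$ satisfies $1\le t\le m-2$, so Corollary~\ref{cor:ballgrowth} gives $|\Ball{w}{1}|\le4$. However, $\Ball{w}{1}$ consists of $w$ and its $k$ neighbors $v_{1,1},\dots,v_{k,1}$. Hence $|\Ball{w}{1}|=k+1\ge5$, a contradiction, and therefore $m\le2$.

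\emph{Case $k=3$ and $\ell_1\ge2$.} Here radius $1$ gives no contradiction, since $|\Ball{w}{1}|=4$ meets the bound exactly. Instead, suppose $m\ge4$. Then $t=2$ is admissible, and Corollary~\ref{cor:ballgrowth} yields $|\Ball{w}{2}|\le6$. Because $\ell_1\le\ell_2\le\ell_3$ and $\ell_1\ge2$, every leg contains both $v_{i,1}$ and $v_{i,2}$. So $\Ball{w}{2}$ contains $w$ together with these six vertices, giving $|\Ball{w}{2}|=7>6$, a contradiction. Hence $m\le3$.

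There is no genuine obstacle; the argument only requires care in two places. First, Corollary~\ref{cor:ballgrowth} needs $t\le m-2$, so the contradiction hypotheses must be chosen as $m\ge3$ for $t=1$ and $m\ge4$ for $t=2$; these match the claimed bounds exactly. Second, in the case $k=3$ the hypothesis $\ell_1\ge2$ is used so that all three legs reach distance $2$. This is consistent with the exceptional case $\ell_1=1$ being excluded from the statement.
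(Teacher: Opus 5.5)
Your proof is correct and matches the paper's argument. You reduce to $w=p_1=a$ by reversing the labeling, then apply Corollary~\ref{cor:ballgrowth} at $w$: with $t=1$ when $m\ge3$ (giving $k+1\le4$), and with $t=2$ when $k=3$, $\ell_1\ge2$ and $m\ge4$ (giving $7\le6$).
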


\begin{proof}
By Proposition~\ref{prop:basic}(ii) we may assume $w=p_1=a$. Suppose first
$m\ge3$, so that Corollary~\ref{cor:ballgrowth} applies with $t=1$ and gives
$1+k=|\Ball{w}{1}|\le4$, whence $k\le3$. This proves the first assertion. If
$k=3$, $\ell_1\ge2$ and $m\ge4$, then Corollary~\ref{cor:ballgrowth} with $t=2$
gives
\[
  1+\sum_{i=1}^{3}\min\{\ell_i,2\}=|\Ball{w}{2}|\le 6 ,
\]
that is $\textstyle\sum_{i}\min\{\ell_i,2\}\le5$; but every $\ell_i\ge2$ makes this sum
equal to $6$, a contradiction.
\end{proof}

\begin{lemma}
\label{lem:oneleg}
Assume $w\notin V(P_{ab})$, so that $P_{ab}$ is a subpath of the $i$th leg,
say with vertex set $\{v_{i,\alpha},\dots,v_{i,\beta}\}$, $\alpha<\beta$ and
$m=\beta-\alpha+1$. Then
\[
  m\le
  \begin{cases}
    \lceil \ell_i/2\rceil+1, & \text{if } k\ge4,\\
    \lfloor \ell_i/2\rfloor+2, & \text{if } k=3 \text{ and } \ell_{i'}\ge2
       \text{ for both } i'\ne i .
  \end{cases}
\]
\end{lemma}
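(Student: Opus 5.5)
Let $x=v_{i,\alpha}$ be the end of $P_{ab}$ nearer the body and set $N_s=N^{(i)}_s$. Both $a$ and $b$ satisfy the ball bound of Corollary~\ref{cor:ballgrowth}, so $x$ does too, whichever of $a,b$ it is. Hence
\[
  |\Ball{x}{t}|\le 2t+2 \qquad\text{for every } t \text{ with } 1\le t\le m-2 .
\]
The plan is to compute $|\Ball{x}{t}|$ exactly for suitable $t$ and compare it with this bound, using the number $\sigma_i$ of Definition~\ref{def:sigma}. In both cases of the statement, Lemma~\ref{lem:sigma} gives $\sigma_i$ finite: $\sigma_i=1$ if $k\ge4$, and $\sigma_i=2$ if $k=3$ and both other legs have at least two vertices.

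\emph{Computing the ball.} Take $t$ with $\alpha\le t\le m-2$ and split $\Ball{x}{t}$ into four parts.
\begin{enumerate}
\item[\textup{(1)}] \emph{Toward the leaf of the $i$th leg.} There are $\ell_i-\alpha\ge\beta-\alpha=m-1>t$ vertices on this side, so exactly $t$ of them lie in the ball.
\item[\textup{(2)}] \emph{Toward the body.} All $\alpha-1$ vertices $v_{i,\alpha-1},\dots,v_{i,1}$ lie in the ball, and so does $w$, since $d(x,w)=\alpha\le t$.
\item[\textup{(3)}] \emph{Other legs.} These contribute exactly the vertices within distance $t-\alpha$ of $w$, that is, $N_{t-\alpha}$ of them.
\item[\textup{(4)}] The vertex $x$ itself.
\end{enumerate}
Adding up,
\[
  |\Ball{x}{t}|=t+\alpha+1+N_{t-\alpha}.
\]
With the bound above and $s=t-\alpha$, this gives $N_s\le s+1$ for every $s\ge0$ with $s+\alpha\le m-2$.

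\emph{Deriving the bound.} Suppose $\sigma_i+\alpha\le m-2$. Taking $s=\sigma_i$ (so $t=\sigma_i+\alpha\ge1$) gives $N_{\sigma_i}\le\sigma_i+1$. This contradicts $N_{\sigma_i}\ge\sigma_i+2$, which holds by the definition of $\sigma_i$. Therefore
\[
  m\le\sigma_i+\alpha+1 .
\]
Independently, $\beta\le\ell_i$ gives $m=\beta-\alpha+1\le\ell_i-\alpha+1$. Adding the two inequalities eliminates $\alpha$:
\[
  2m\le\ell_i+\sigma_i+2,\qquad\text{so}\qquad m\le\lfloor(\ell_i+\sigma_i)/2\rfloor+1 .
\]
For $\sigma_i=1$ this is $\lceil\ell_i/2\rceil+1$, and for $\sigma_i=2$ it is $\lfloor\ell_i/2\rfloor+2$, which are the two claimed bounds.

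\emph{Main obstacle.} The key idea is to pair the ball-growth inequality at the end near $w$, which yields $m\le\sigma_i+\alpha+1$, with the trivial length bound $m\le\ell_i-\alpha+1$ from $\beta\le\ell_i$. Neither inequality alone controls $m$, because $\alpha$ is unknown, but their sum does. The only point needing care in the count is that the leaf-side contribution is exactly $t$. This holds because the path continues at least $m-1>t$ steps beyond $x$ away from $w$, so no truncation at the leaf occurs.
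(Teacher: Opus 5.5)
Your proposal is correct and follows essentially the paper's argument: apply Corollary~\ref{cor:ballgrowth} at the endpoint $v_{i,\alpha}$ nearer the body, compute $|\Ball{v_{i,\alpha}}{t}|$ exactly to get $m\le\alpha+\sigma_i+1$, and combine this with $\beta\le\ell_i$ to eliminate $\alpha$. The only difference is presentational: you handle both cases at once through $\sigma_i\in\{1,2\}$ from Lemma~\ref{lem:sigma}, whereas the paper treats $k\ge4$ and $k=3$ separately with the explicit radii $t=\alpha+1$ and $t=\alpha+2$.
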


\begin{proof}
Both endpoints of $P_{ab}$ lie in $\{a,b\}$, so
Corollary~\ref{cor:ballgrowth} applies to $q=v_{i,\alpha}$, the endpoint nearer
$w$, for every $t\le m-2$. For $t\ge\alpha$ the ball $\Ball{q}{t}$ consists of
the vertices $v_{i,s}$ with $s\le\min\{\ell_i,\alpha+t\}$, the body $w$, and the
vertices of the other legs at distance at most $t-\alpha$ from $w$; hence
\[
  |\Ball{q}{t}|=\min\{\ell_i,\alpha+t\}+1+N^{(i)}_{t-\alpha}
  \qquad (t\ge\alpha).
\]

\smallskip
\noindent\emph{The case $k\ge4$.}
Suppose, for a contradiction, that $\alpha+1\le m-2$. Then $m\ge\alpha+3$ and
$\beta=\alpha+m-1\ge2\alpha+2$, so $\ell_i\ge\beta\ge2\alpha+2>2\alpha+1$.
Applying Corollary~\ref{cor:ballgrowth} with $t=\alpha+1$ and using the displayed
formula together with $N^{(i)}_1=k-1$ gives
\[
  (2\alpha+1)+1+(k-1)=|\Ball{q}{\alpha+1}|\le 2(\alpha+1)+2=2\alpha+4 ,
\]
that is $k\le3$, a contradiction. Hence $\alpha+1>m-2$, so $m\le\alpha+2$ and
$\beta=\alpha+m-1\le2\alpha+1$. Therefore
$m=\beta-\alpha+1\le\beta-\tfrac{\beta-1}{2}+1=\tfrac{\beta+3}{2}$, and since
$m$ is an integer, $m\le\lceil\beta/2\rceil+1\le\lceil\ell_i/2\rceil+1$.

\smallskip
\noindent\emph{The case $k=3$ with $\ell_{i'}\ge2$ for both $i'\ne i$.}
Here $N^{(i)}_2=4$. Suppose, for a contradiction, that $\alpha+2\le m-2$.
Then $m\ge\alpha+4$ and $\beta\ge2\alpha+3$, so $\ell_i\ge2\alpha+3>2\alpha+2$,
and Corollary~\ref{cor:ballgrowth} with $t=\alpha+2$ gives
\[
  (2\alpha+2)+1+4=|\Ball{q}{\alpha+2}|\le2(\alpha+2)+2=2\alpha+6 ,
\]
that is $2\alpha+7\le2\alpha+6$, a contradiction. Hence $m\le\alpha+3$ and
$\beta\le2\alpha+2$, so $m=\beta-\alpha+1\le\tfrac{\beta+4}{2}$ and
$m\le\lfloor\beta/2\rfloor+2\le\lfloor\ell_i/2\rfloor+2$.
\end{proof}

\subsection{Paths meeting the body next to an end}

We now analyse the third kind of path in Lemma~\ref{lem:bodypos}(a) in the case
where one of the two initial segments has length one, which is where the real
difficulty lies. Throughout this subsection we assume that $w=p_2$; by
Proposition~\ref{prop:basic}(ii) the case $w=p_{m-1}$ is obtained from it by
reversing the labeling. Thus, by Lemma~\ref{lem:bodypos}(a),
\begin{equation}
  a=p_1=v_{i_0,1},\qquad w=p_2,\qquad p_{r}=v_{i_1,r-2}\ \ (3\le r\le m)
  \label{eq:B2}
\end{equation}
for two distinct legs $i_0\ne i_1$, and by Theorem~\ref{thm:path}(ii), writing
$s=c(a)$,
\begin{equation}
  c(w)=s+1,\qquad c(v_{i_1,r})=s+r+1\ \ (1\le r\le m-2),
  \label{eq:B2labels}
\end{equation}
while every vertex outside $V(P_{ab})$ has label smaller than $s$ or larger
than $s+m-1$.

The next lemma is the key to the whole section. It says that the configuration
\eqref{eq:B2} is essentially impossible unless the leg $i_0$ consists of a
single vertex --- which is exactly the situation in which the exceptional values
of $\ldc$ occur.

\begin{lemma}
\label{lem:B2}
Assume \eqref{eq:B2} and $\ell_{i_0}\ge2$. Then $m\le3$. If moreover
$\ell_{i_1}\ge2$, then $w\notin S$ and hence $|S|\le2$.
\end{lemma}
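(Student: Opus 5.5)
The plan is to exploit the extra vertex $v_{i_0,2}$, which exists because $\ell_{i_0}\ge2$. It is adjacent to $a$ but lies off $P_{ab}$. The argument plays the consistency of $a$ against that of $b$, using Lemma~\ref{lem:ball} and Corollary~\ref{cor:interval}. Throughout, $s=c(a)$, the labels on $P_{ab}$ are given by \eqref{eq:B2labels}, and $a,b\in S$ by definition.

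\emph{Step 1: pinning $c(v_{i_0,2})$.} Suppose $m\ge3$. Then $\Ball{a}{1}=\{a,w,v_{i_0,2}\}$, while $p_3\notin\Ball{a}{1}$ and $c(a,p_3)=2$. Lemma~\ref{lem:ball} at radius $1$ therefore gives $c(a,v_{i_0,2})\le2$. By Theorem~\ref{thm:path}(iii) this label is not in $[s,s+m-1]$, so $c(v_{i_0,2})\in\{s-1,s-2\}$. By Corollary~\ref{cor:interval}, $c(\Ball{a}{1})$ is an interval containing $s$ and $s+1$. This rules out $s-2$, so
\[
c(v_{i_0,2})=s-1.
\]

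\emph{Step 2: $m\le3$.} Suppose $m\ge4$. Since $k\ge3$, the body $w=p_2$ has a neighbor $y$ on a third leg, and $y\notin V(P_{ab})$. Because $2\le m-2$, Corollary~\ref{cor:branch}(i) gives $y\in L$, so $c(y)<s$. As $c(v_{i_0,2})=s-1$ is already taken, in fact $c(y)\le s-2$.

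Now compare distances from $b$. We have $d(b,y)=(m-2)+1=m-1$ and $d(b,v_{i_0,2})=m$. Consistency of $b$, with $c(b)=s+m-1$, gives $c(b,y)\le c(b,v_{i_0,2})$. Since both labels lie below $c(b)$, this reads
\[
s+m-1-c(y)\le m,
\]
that is, $c(y)\ge s-1$. This contradicts $c(y)\le s-2$, so $m\le3$.

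\emph{Step 3: $w\notin S$ when $\ell_{i_1}\ge2$.} By Step 2 and Lemma~\ref{lem:bodypos}(a), the path is $P_{ab}=a\,w\,v_{i_1,1}$, with labels $s,s+1,s+2$. Step 1 gives $c(v_{i_0,2})=s-1$. Applying the same argument to $b=v_{i_1,1}$ (via the reversal in Proposition~\ref{prop:basic}(ii), now possible because $\ell_{i_1}\ge2$) gives $c(v_{i_1,2})=s+3$.

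Now take a neighbor $y$ of $w$ on a third leg. Its label lies outside $[s-1,s+3]$, because the labels $s-1,\dots,s+3$ are all used by other vertices. But the ball $\Ball{w}{1}$ contains $a$, $v_{i_1,1}$ and $y$, and not the vertices labeled $s-1$ and $s+3$, which are at distance $2$ from $w$. So $c(\Ball{w}{1})$ is not an interval, and Corollary~\ref{cor:interval} shows $w\notin S$. Since $S\subseteq V(P_{ab})=\{a,w,b\}$, it follows that $|S|\le2$.

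The main obstacle is Step 2: finding the right pair of vertices to compare from $b$. The key choice is the third-leg neighbor $y$ of $w$ together with $v_{i_0,2}$, whose label has already been pinned to $s-1$ in Step 1.
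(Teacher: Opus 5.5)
Your proposal is correct and follows the paper's proof essentially step for step. Like the paper, you first pin $c(v_{i_0,2})=s-1$, then play $v_{i_0,2}$ against a first vertex of a third leg as seen from $b$ to exclude $m\ge4$, and for $m=3$ use the reversed labeling to get $c(v_{i_1,2})=s+3$, which blocks $\Ball{w}{1}$. The differences are cosmetic: you invoke Corollary~\ref{cor:interval} where the paper argues directly from Lemma~\ref{lem:ball}, you get $c(y)<s$ from Corollary~\ref{cor:branch}(i) instead of Corollary~\ref{cor:ballgrowth} with $t=2$, and you run the distance comparison only at $b$ rather than at every vertex of leg $i_1$.
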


\begin{proof}
\textbf{Step 1: if $m\ge3$ then $c(v_{i_0,2})=s-1$.}
The vertex $v_{i_0,2}$ exists by hypothesis, is adjacent to $a$ and does not lie
on $P_{ab}$, so $\Ball{a}{1}=\{a,w,v_{i_0,2}\}$. The vertex $p_3=v_{i_1,1}$
satisfies $d(a,p_3)=2$ and, by \eqref{eq:B2labels}, $c(a,p_3)=2$. Applying
Lemma~\ref{lem:ball} to $a$ with radius $1$ gives $c(a,v_{i_0,2})\le
c(a,p_3)=2$. Since $c(v_{i_0,2})\notin[s,s+m-1]$ and $m\ge3$, the values
$s+1$ and $s+2$ are excluded, so $c(v_{i_0,2})\in\{s-2,s-1\}$; as labels are
positive this already forces $s\ge2$.

Suppose $c(v_{i_0,2})=s-2$, so that $s\ge3$. Then $c(a,v_{i_0,2})=2$, and
Lemma~\ref{lem:ball} at radius $1$ forces every vertex outside $\Ball{a}{1}$ to
have label distance at least $2$ from $a$. But $c(\Ball{a}{1})=\{s-2,s,s+1\}$,
so the vertex carrying the label $s-1$ lies outside $\Ball{a}{1}$ and has label
distance $1$ from $a$, a contradiction. Hence $c(v_{i_0,2})=s-1$.

\medskip
\noindent\textbf{Step 2: if $m\ge4$ then $c(v_{i'',1})\le s-2$ for every leg
$i''\notin\{i_0,i_1\}$.}
Such a leg exists because $k\ge3$. We have $d(a,v_{i'',1})=2$, so
$v_{i'',1}\in\Ball{a}{2}$, and since $m\ge4$ Corollary~\ref{cor:ballgrowth}
applies with $t=2$ and gives
$\Ball{a}{2}\subseteq\{p_1,p_2,p_3\}\cup c^{-1}([\,s-3,s-1\,])$. As
$v_{i'',1}\notin V(P_{ab})$ we get $c(v_{i'',1})\in[s-3,s-1]$, and
$c(v_{i'',1})\ne s-1=c(v_{i_0,2})$ by Step 1.

\medskip
\noindent\textbf{Step 3: if $m\ge4$ then no vertex of the leg $i_1$ is
consistent.}
Let $u=v_{i_1,j}$ with $1\le j\le m-2$ and suppose $u\in S$, so that
$c(u)=s+j+1$ by \eqref{eq:B2labels}. Fix a leg $i''\notin\{i_0,i_1\}$. Since
$d(u,w)=j$ we have
\[
  d(u,v_{i'',1})=j+1,\qquad d(u,v_{i_0,2})=j+2,
\]
so $v_{i'',1}\in\Ball{u}{j+1}$ while $v_{i_0,2}\notin\Ball{u}{j+1}$. By Steps 1
and 2,
\[
\begin{aligned}
  c(u,v_{i'',1})&=(s+j+1)-c(v_{i'',1})\ge j+3,\\
  c(u,v_{i_0,2})&=(s+j+1)-(s-1)=j+2,
\end{aligned}
\]
which contradicts Lemma~\ref{lem:ball} applied to $u$ with radius $j+1$. Since
$b=p_m=v_{i_1,m-2}$ belongs to $S$, this shows that $m\ge4$ is impossible;
hence $m\le3$.

\medskip
\noindent\textbf{Step 4: the case $m=3$ with $\ell_{i_1}\ge2$.}
Here $P_{ab}=a\,w\,b$ with $a=v_{i_0,1}$, $b=v_{i_1,1}$ and $c(b)=s+2$. By
Step 1 we have $c(v_{i_0,2})=s-1$. Reversing the labeling interchanges $a$ with
$b$ and $i_0$ with $i_1$, and replaces $c$ by $c'(x)=n+1-c(x)$, so
$c'(a)=n-s-1$; Step 1 applied to $c'$ gives $c'(v_{i_1,2})=c'(b)-1$, that is
$c(v_{i_1,2})=c(b)+1=s+3$.

Suppose $w\in S$. Then $\Ball{w}{1}=\{w\}\cup\{v_{i',1}:1\le i'\le k\}$, while
$v_{i_0,2}\notin\Ball{w}{1}$ has $c(w,v_{i_0,2})=2$. Lemma~\ref{lem:ball} at
radius $1$ therefore forces $c(w,v_{i',1})\le2$, that is
$c(v_{i',1})\in[\,s-1,s+3\,]$, for every $i'$. But the five labels
$s-1,s,s+1,s+2,s+3$ are carried by $v_{i_0,2},a,w,b,v_{i_1,2}$ respectively, and
$k\ge3$ provides a leg $i''\notin\{i_0,i_1\}$ whose vertex $v_{i'',1}$ is none
of these --- a contradiction. Hence $w\notin S$ and $S\subseteq\{a,b\}$.
\end{proof}

When $\ell_{i_0}=1$ the first step of the previous proof breaks down, because
$a$ is then a leaf and $\Ball{a}{1}=\{a,w\}$ carries no information. This is
precisely the source of the exceptional values of $\ldc$. For $k\ge4$ we can
nevertheless bound $|S|$; note that in the situation of the next lemma
$\ell_{i_0}=1$ holds automatically, by Lemma~\ref{lem:B2}.

\begin{lemma}
\label{lem:B2short}
Assume \eqref{eq:B2}, $k\ge4$ and $m\ge4$, so that $\ell_{i_0}=1$. Then
\begin{enumerate}
\item[\textup{(i)}] $w\notin S$;
\item[\textup{(ii)}] every $j$ with $v_{i_1,j}\in S$ satisfies $2j+2>m-2$;
\item[\textup{(iii)}] $|S|\le\lceil m/2\rceil+1\le\lceil \ell_{i_1}/2\rceil+2$.
\end{enumerate}
\end{lemma}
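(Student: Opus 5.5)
The plan is to use the explicit labels \eqref{eq:B2labels} on $P_{ab}$ together with Lemma~\ref{lem:ball} at a suitable radius, and play this against Theorem~\ref{thm:path}(iii). By that result, every vertex off $P_{ab}$ has label $<s$ or $>s+m-1$. In each case the key vertices are the first vertices $v_{i'',1}$ of the $k-2\ge2$ legs $i''\notin\{i_0,i_1\}$. They lie off $P_{ab}$, and we will squeeze them into a label window containing only one free label, namely $s-1$. This is where $k\ge4$ enters.

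For (i), suppose $w\in S$. Since $m\ge4$, the vertex $p_4=v_{i_1,2}$ exists. It satisfies $d(w,p_4)=2$, so $p_4\notin\Ball{w}{1}$, and $c(w,p_4)=2$. Lemma~\ref{lem:ball} at radius $1$ then forces every neighbour of $w$ to have label in $[s-1,s+3]$. The labels $s,s+1,s+2,s+3$ are carried by $a,w,p_3,p_4$, and $p_4$ is not a neighbour of $w$. So the $k-2\ge2$ vertices $v_{i'',1}$ must all share the single label $s-1$, which is impossible.

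For (ii), let $u=v_{i_1,j}\in S$, so $c(u)=s+j+1$, and suppose $2j+2\le m-2$. Then $x=v_{i_1,2j+2}=p_{2j+4}$ lies on $P_{ab}$. It satisfies $d(u,x)=j+2$, so $x\notin\Ball{u}{j+1}$, and $c(u,x)=j+2$. Each $v_{i'',1}$ is at distance $j+1$ from $u$, hence lies in $\Ball{u}{j+1}$. Lemma~\ref{lem:ball} therefore puts its label in $[s-1,\,s+2j+3]$. Because $s+2j+3\le s+m-1$, the part of this window above $s+m-1$ is empty, and the part below $s$ is just $\{s-1\}$. Again two distinct vertices would need the label $s-1$, a contradiction. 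The only care needed here is checking that $x$ really exists on the leg, which is exactly the inequality $2j+2\le m-2$.

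For (iii), Theorem~\ref{thm:path}(i) and part (i) give
\[
S\subseteq\{a\}\cup\{v_{i_1,j}: 1\le j\le m-2,\ 2j+2>m-2\}.
\]
The condition $2j+2>m-2$ means $j\ge\lfloor m/2\rfloor-1$, so the second set has $m-2-(\lfloor m/2\rfloor-1)+1=\lceil m/2\rceil$ elements. This gives $|S|\le\lceil m/2\rceil+1$. Finally, $m-2\le\ell_{i_1}$ because $P_{ab}$ runs along leg $i_1$, which yields the second inequality. I expect no step to be a serious obstacle. The main point is recognising in (ii) that the comparison vertex should be taken further along the leg $i_1$, which leaves only the single label $s-1$ available below the path block.
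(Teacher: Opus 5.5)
Your proposal is correct and follows essentially the same argument as the paper. Part (i) is the same radius-$1$ window argument, and part (iii) is the same count. In part (ii) you use the same comparison vertex $v_{i_1,2j+2}$ at radius $j+1$; you phrase the contradiction as the label window $[s-1,s+2j+3]$, whereas the paper shows that some $v_{i'',1}$ has label distance at least $j+3$, and these are equivalent.
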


\begin{proof}
(i) Suppose $w\in S$. Then $\Ball{w}{1}=\{w\}\cup\{v_{i',1}:1\le i'\le k\}$,
while $p_4=v_{i_1,2}\notin\Ball{w}{1}$ has $c(w,p_4)=2$ by
\eqref{eq:B2labels}. Lemma~\ref{lem:ball} at radius $1$ gives
$c(v_{i',1})\in[\,s-1,s+3\,]$ for every $i'$. The labels $s,s+1,s+2$ are
carried by $a$, $w$ and $v_{i_1,1}$, and $s+3$ by $p_4\notin\Ball{w}{1}$; so the
$k-2\ge2$ vertices $v_{i'',1}$ with $i''\notin\{i_0,i_1\}$ would all have to
carry the single label $s-1$, which is absurd.

\medskip
\noindent(ii) Suppose $v_{i_1,j}\in S$ and $2j+2\le m-2$; write $u=v_{i_1,j}$,
so $c(u)=s+j+1$. The $k-2\ge2$ vertices $v_{i'',1}$ with
$i''\notin\{i_0,i_1\}$ lie at distance $j+1$ from $u$ and off $P_{ab}$, so each
of them has label smaller than $s$ or larger than $s+m-1$. We claim that one of
them satisfies $c(u,v_{i'',1})\ge j+3$. Indeed, if two of them have labels
smaller than $s$, these labels are distinct, so one of them is at most $s-2$ and
the corresponding label distance is at least $j+3$; and if one of them has label
larger than $s+m-1$, its label distance exceeds $m-j-2\ge(2j+4)-j-2=j+2$, hence
is at least $j+3$. On the other hand $v_{i_1,2j+2}$ lies on $P_{ab}$ (because
$2j+2\le m-2$), satisfies $d(u,v_{i_1,2j+2})=j+2$ and, by
\eqref{eq:B2labels}, $c(u,v_{i_1,2j+2})=j+2$. This contradicts
Lemma~\ref{lem:ball} applied to $u$ with radius $j+1$.

\medskip
\noindent(iii) By (ii) every consistent vertex of the leg $i_1$ has index
$j\ge\lceil (m-3)/2\rceil$, and the indices occurring on $P_{ab}$ are at most
$m-2$; counting gives
$|S\cap\{v_{i_1,j}\}_{j}|\le\lceil m/2\rceil$. Since $S\subseteq V(P_{ab})$ and
$w\notin S$ by (i), the only further possible member of $S$ is $a$, so
$|S|\le\lceil m/2\rceil+1$. Finally $m-2\le \ell_{i_1}$ gives
$\lceil m/2\rceil\le\lceil \ell_{i_1}/2\rceil+1$.
\end{proof}

\subsection{Spiders with no leg of length one}

\begin{theorem}
\label{thm:spiderlong}
Let $G=SP(w;\ell_1,\dots,\ell_k)$ with $k\ge3$ and $\ell_1\ge2$. Then
\[
  \ldc(G)=
  \begin{cases}
    \bigl\lfloor \ell_3/2\bigr\rfloor+2, & \text{if } k=3,\\[2pt]
    \bigl\lceil \ell_k/2\bigr\rceil+1, & \text{if } k\ge4 .
  \end{cases}
\]
\end{theorem}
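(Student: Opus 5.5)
The lower bound is Corollary~\ref{cor:spiderlower}: since $\ell_1\ge2$ we have $\ell_k\ge2$, so it gives $\ldc(G)\ge\lceil\ell_k/2\rceil+1$ for $k\ge4$ and $\ldc(G)\ge\lfloor\ell_3/2\rfloor+2$ for $k=3$. It remains to prove the matching upper bound. Let $c$ be any labeling with $S=S_c(G)$ and $|S|\ge2$ (smaller $S$ is trivially within the bound, since the claimed value is at least $2$). We adopt the notation $a,b,P_{ab}=p_1\cdots p_m$ of Theorem~\ref{thm:path}, so that $|S|\le m$ and $S\subseteq V(P_{ab})$. We then split according to the position of $w$ relative to $P_{ab}$, using Lemma~\ref{lem:bodypos}.

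\emph{Case A: $w\notin V(P_{ab})$.} Then $P_{ab}$ lies inside a single leg $i$. Lemma~\ref{lem:oneleg} applies: for $k=3$ its hypothesis holds because all $\ell_{i'}\ge2$. It gives $|S|\le m\le\lceil\ell_i/2\rceil+1\le\lceil\ell_k/2\rceil+1$ for $k\ge4$, and $|S|\le\lfloor\ell_i/2\rfloor+2\le\lfloor\ell_3/2\rfloor+2$ for $k=3$, by monotonicity in $\ell_i\le\ell_k$.

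\emph{Case B: $w\in V(P_{ab})$.}
\begin{itemize}
\item If $m\le4$, then $|S|\le m$, and the bounds already exceed small values except in a few cases; these need care below.
\item If $m\ge5$, Lemma~\ref{lem:bodypos}(b) puts $w\in\{p_1,p_2,p_{m-1},p_m\}$.
\item If $w$ is an endpoint, Lemma~\ref{lem:bodyend} gives $m\le2$ for $k\ge4$ and $m\le3$ for $k=3$. Both values are at most the target, since $\lceil\ell_k/2\rceil+1\ge2$ and $\lfloor\ell_3/2\rfloor+2\ge3$.
\item If $w=p_2$ or $w=p_{m-1}$, reverse the labeling using Proposition~\ref{prop:basic}(ii) to assume \eqref{eq:B2}. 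Since $\ell_{i_0}\ge\ell_1\ge2$, Lemma~\ref{lem:B2} gives $m\le3$. Since also $\ell_{i_1}\ge2$, it gives $|S|\le2$.
\end{itemize}

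The main obstacle is bookkeeping the small values of $m$ in Case B, for paths $w\in V(P_{ab})$ with $m\le4$ where $w$ is not at position $1,2,m-1,m$. For $m\le4$, every position of $w$ is an endpoint or a neighbour of one, so the same sub-case analysis applies without invoking Lemma~\ref{lem:bodypos}(b):
\begin{itemize}
\item An endpoint $w$ falls under Lemma~\ref{lem:bodyend}.
\item $w=p_2$ or $w=p_{m-1}$ falls under Lemma~\ref{lem:B2}, via \eqref{eq:B2} after possibly reversing. Note that $m\ge3$ is needed for $w$ to be interior, and in the $m=3$ case $w=p_2$ is covered there.
\item With $m=2$, $|S|\le2$ is trivial.
\end{itemize}
Combining all cases gives $|S|\le\lceil\ell_k/2\rceil+1$ for $k\ge4$ and $|S|\le\lfloor\ell_3/2\rfloor+2$ for $k=3$, which matches the lower bound.
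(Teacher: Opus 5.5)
Your proof is correct and follows the paper's argument essentially verbatim. You take the lower bound from Corollary~\ref{cor:spiderlower} and then make the same case split on the position of $w$: off $P_{ab}$ via Lemma~\ref{lem:oneleg}, at an end via Lemma~\ref{lem:bodyend}, next to an end via Lemma~\ref{lem:B2}, and deeper inside excluded by Corollary~\ref{cor:branch}(iii), which is what Lemma~\ref{lem:bodypos}(b) encodes. Your separate treatment of $m\le4$ is harmless but unnecessary, since the range $3\le r\le m-2$ is simply empty there; that is why the paper never splits on $m$.
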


\begin{proof}
The lower bounds are Corollary~\ref{cor:spiderlower}. For the upper bounds, let
$c$ be a labeling with $|S|\ge2$, where $S=S_c(G)$, and let $m=|V(P_{ab})|$, so
that $|S|\le m$ by Theorem~\ref{thm:path}(i). By
Lemma~\ref{lem:bodypos}(a) we may consider the position of $w$.

If $w=p_r$ with $3\le r\le m-2$, then $\deg_G(w)=2$ by
Corollary~\ref{cor:branch}(iii), which is false; so this does not occur.

If $w\in\{p_2,p_{m-1}\}$, then $P_{ab}$ is of the third kind in
Lemma~\ref{lem:bodypos}(a) with one initial segment of length one, so
\eqref{eq:B2} holds after reversing the labeling if necessary;
Lemma~\ref{lem:B2} applies, all legs having at least two vertices, and yields
$|S|\le2$.

If $w\in\{p_1,p_m\}$, then Lemma~\ref{lem:bodyend} gives $m\le2$ when $k\ge4$
and $m\le3$ when $k=3$; hence $|S|\le2$, respectively $|S|\le3$.

If $w\notin V(P_{ab})$, then Lemma~\ref{lem:oneleg} gives
$m\le\lceil \ell_i/2\rceil+1\le\lceil \ell_k/2\rceil+1$ when $k\ge4$, and
$m\le\lfloor \ell_i/2\rfloor+2\le\lfloor \ell_3/2\rfloor+2$ when $k=3$.

Since $\ell_k\ge2$ we have $\lceil \ell_k/2\rceil+1\ge2$ and
$\lfloor \ell_3/2\rfloor+2\ge3$, so in every case $|S|$ is bounded by the
asserted value. The last comparison is tight: for $k=3$ and $\ell_3=2$, that is
for $G=SP(w;2,2,2)$, the asserted value is $3$, which is exactly the bound
supplied by the case $w\in\{p_1,p_m\}$.
\end{proof}

\subsection{Spiders with three legs, one of length one}
\label{sec:sp3short}

\begin{theorem}
\label{thm:spider3short}
If $k=3$ and $\ell_1=1$, then $\ldc\bigl(SP(w;1,\ell_2,\ell_3)\bigr)=\ell_3+2$.
\end{theorem}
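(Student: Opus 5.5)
The proof will match the lower bound $\ell_3+2$, obtained from an explicit natural labeling, with the upper bound $|S|\le m\le \ell_3+2$, obtained by locating the body $w$ on $P_{ab}$.

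\emph{Lower bound.} I take $i_0=1$ and $i_1=3$, and use the labeling $c=c_3^{\,1}$. This is a natural labeling relative to the third leg, so Lemma~\ref{lem:sigma} applies to it. Since $k=3$ and $\ell_1=1$, Lemma~\ref{lem:sigma} gives $\sigma_3=\infty$. Then part (iii) of that lemma shows that all $\ell_3$ vertices of the third leg, together with $w$, are consistent, which gives $\ell_3+1$ consistent vertices. For the leaf $v_{1,1}$ I apply Lemma~\ref{lem:leafconsistent}. Here $\Ntwo_t=\min\{\ell_2,t\}\le t\le t+2$ for every $t$, so condition \eqref{eq:leafcond} holds and $v_{1,1}$ is consistent too. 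Hence $\ldc(G)\ge\ell_3+2$.

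\emph{Upper bound.} Let $c$ be any labeling with $|S|\ge2$. By Theorem~\ref{thm:path}(i) we have $|S|\le m$, so it suffices to show $m\le\ell_3+2$. If $m\le4$ we are done, since $\ell_3\ge2$. So assume $m\ge5$, and split into cases by the position of $w$, using Lemma~\ref{lem:bodypos}.
\begin{itemize}
\item If $w\notin V(P_{ab})$, then $P_{ab}$ lies inside one leg, so $m\le\ell_3$.
\item If $w\in\{p_1,p_m\}$, then $P_{ab}$ consists of $w$ and an initial segment of one leg, so $m\le\ell_3+1$.
\item By Lemma~\ref{lem:bodypos}(b), the only other possibility is $w\in\{p_2,p_{m-1}\}$. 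After reversing the labeling (Proposition~\ref{prop:basic}(ii)) we are in configuration \eqref{eq:B2}.
 \begin{itemize}
 \item If $\ell_{i_0}\ge2$, Lemma~\ref{lem:B2} gives $m\le3$, contradicting $m\ge5$.
 \item Otherwise $\ell_{i_0}=1$. Then $P_{ab}$ consists of $a$, $w$ and an initial segment of leg $i_1$, so $m\le 2+\ell_{i_1}\le\ell_3+2$.
 \end{itemize}
\end{itemize}
In every case $|S|\le\ell_3+2$, which gives equality.

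I do not expect a serious obstacle. The only points needing care are the following.
\begin{itemize}
\item The labeling $c_3^{\,1}$ must genuinely satisfy the hypotheses of both Lemma~\ref{lem:sigma} and Lemma~\ref{lem:leafconsistent} simultaneously. It does: $c_3^{\,1}$ is by definition a natural labeling relative to leg $3$, with a particular tie-breaking that Lemma~\ref{lem:sigma} does not depend on.
\item The small-$m$ cases must be disposed of before invoking Lemma~\ref{lem:bodypos}(b), which needs $m\ge5$.
\end{itemize}
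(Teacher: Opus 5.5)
Your proposal is correct and is essentially the paper's proof. The lower bound uses the same labeling $c_3^{\,1}$ with Lemmas~\ref{lem:sigma} and \ref{lem:leafconsistent}, and the upper bound uses the same case analysis on the position of $w$ on $P_{ab}$. The assumption $\ell_3\ge2$ comes from the section's standing hypothesis, and the paper treats the star $K_{1,3}$ separately. The one real difference is that your appeal to Lemma~\ref{lem:B2} is superfluous: in configuration \eqref{eq:B2} the vertices $p_3,\dots,p_m$ lie on leg $i_1$ whatever $\ell_{i_0}$ is, so $m\le\ell_{i_1}+2\le\ell_3+2$ directly.
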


\begin{proof}
If $\ell_3=1$ the spider is the star $K_{1,3}$ and the claim is $\ldc=3$, which
holds by \cite[Proposition 4.4]{CH2025}; so let $\ell_3\ge2$.

\emph{Lower bound.} Take $i_0=1$ and $i_1=3$, and consider the labeling
$c=c_3^{\,1}$. Since $\ell_1=1$, Lemma~\ref{lem:sigma} gives $\sigma_3=\infty$,
so by parts (i) and (ii) of that lemma all $\ell_3$ vertices of the third leg
and the body $w$ are consistent under $c$. Moreover
$\Ntwo_t=\min\{\ell_2,t\}\le t\le t+2$ for every $t$, so $v_{1,1}$ is consistent by
Lemma~\ref{lem:leafconsistent}. Hence $\ldc(G)\ge\ell_3+2$.

\emph{Upper bound.} Let $c$ be a labeling with $|S|\ge2$. If
$w\notin V(P_{ab})$ then $P_{ab}$ lies in a single leg by
Lemma~\ref{lem:bodypos}(a), so $m\le\ell_3$; note that
Lemma~\ref{lem:oneleg} does not apply here, since $k=3$ and one of the legs
other than the one containing $P_{ab}$ may have length one. If
$w\in\{p_1,p_m\}$ then $m\le1+\ell_3$; if $w\in\{p_2,p_{m-1}\}$ then $m-2$
vertices of $P_{ab}$ lie on one leg, so $m\le\ell_3+2$; and $w=p_r$ with
$3\le r\le m-2$ is impossible by Corollary~\ref{cor:branch}(iii). In every case
$|S|\le m\le\ell_3+2$.
\end{proof}

\begin{proposition}
\label{prop:spiderk4}
Let $G=SP(w;1,\ell_2,\dots,\ell_k)$ with $k\ge4$ and $\ell_k\ge2$. Then
\begin{enumerate}
\item[\textup{(i)}] $\bigl\lceil \ell_k/2\bigr\rceil+1\ \le\ \ldc(G)\ \le\
\bigl\lceil \ell_k/2\bigr\rceil+2$;
\item[\textup{(ii)}] if $\ldc(G)=\lceil \ell_k/2\rceil+2$ and $\ell_k\ge3$, then
every optimal labeling $c$ satisfies \eqref{eq:B2} with $\ell_{i_0}=1$ and
$m\ge4$, and $S_c(G)$ consists of the leaf $v_{i_0,1}$ together with
$\lceil \ell_k/2\rceil+1$ vertices of the leg $i_1$.
\end{enumerate}
\end{proposition}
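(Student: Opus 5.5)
The lower bound in (i) is Corollary~\ref{cor:spiderlower} with $k\ge4$, which needs only $\ell_k\ge2$. For the upper bound I will take a labeling $c$ with $|S|\ge2$, keep the notation $a,b,P_{ab}=p_1\cdots p_m$, and split according to the position of $w$ on $P_{ab}$, as in the proof of Theorem~\ref{thm:spiderlong}.

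\begin{enumerate}
\item[\textup{(1)}] If $w=p_r$ with $3\le r\le m-2$, this is impossible by Corollary~\ref{cor:branch}(iii).
\item[\textup{(2)}] If $w\in\{p_1,p_m\}$, then Lemma~\ref{lem:bodyend} with $k\ge4$ gives $m\le2$. Note that Lemma~\ref{lem:bodyend} for $k\ge4$ does not need $\ell_1\ge2$.
\item[\textup{(3)}] If $w\notin V(P_{ab})$, then $P_{ab}$ lies in one leg $i$. Lemma~\ref{lem:oneleg} for $k\ge4$ gives $m\le\lceil\ell_i/2\rceil+1\le\lceil\ell_k/2\rceil+1$. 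Its proof for $k\ge4$ uses only $N^{(i)}_1=k-1$, which holds regardless of leg lengths.
\item[\textup{(4)}] If $w\in\{p_2,p_{m-1}\}$, reverse the labeling if necessary (Proposition~\ref{prop:basic}(ii)) so that \eqref{eq:B2} holds with legs $i_0\ne i_1$. If $m\le3$, then $|S|\le3\le\lceil\ell_k/2\rceil+2$. If $m\ge4$, Lemma~\ref{lem:B2} forces $\ell_{i_0}=1$, and Lemma~\ref{lem:B2short}(iii) gives $|S|\le\lceil\ell_{i_1}/2\rceil+2\le\lceil\ell_k/2\rceil+2$.
\end{enumerate}

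In every case $|S|\le\lceil\ell_k/2\rceil+2$, which proves (i).

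For (ii), let $c$ be optimal with $|S|=\lceil\ell_k/2\rceil+2$ and $\ell_k\ge3$. Cases (2) and (3) give $|S|\le\lceil\ell_k/2\rceil+1$, so they are excluded. Case (4) with $m\le3$ gives $|S|\le3$. This is strictly less than $\lceil\ell_k/2\rceil+2\ge4$ precisely because $\ell_k\ge3$; this is where the hypothesis enters. Hence, after reversing if needed, \eqref{eq:B2} holds with $m\ge4$, and $\ell_{i_0}=1$ by Lemma~\ref{lem:B2}.

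Then $w\notin S$ by Lemma~\ref{lem:B2short}(i). Also $S\subseteq\{a\}\cup\{v_{i_1,j}\}$ by Theorem~\ref{thm:path}(i), and the leg-$i_1$ vertices number at most $\lceil m/2\rceil$ by the count in Lemma~\ref{lem:B2short}(iii). Combining
\[
\lceil\ell_k/2\rceil+2=|S|\le\lceil m/2\rceil+1\le\lceil\ell_{i_1}/2\rceil+2
\]
shows that both $a$ and exactly $\lceil\ell_k/2\rceil+1$ vertices of leg $i_1$ lie in $S$. Here the upper bound on the leg-$i_1$ count comes from $\lceil m/2\rceil\le\lceil\ell_{i_1}/2\rceil+1\le\lceil\ell_k/2\rceil+1$. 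The lower bound comes from $|S|$ minus at most one vertex, namely $a$; equality throughout forces both $a\in S$ and the count to be exact.

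The main obstacle is the phrase ``every optimal labeling satisfies \eqref{eq:B2}'': this holds only up to reversal. I would state that the reversed case $w=p_{m-1}$ is identified with \eqref{eq:B2} via Proposition~\ref{prop:basic}(ii), in the same way the paper does elsewhere. A second point to check is that Lemmas~\ref{lem:bodyend} and~\ref{lem:oneleg} are really valid for $k\ge4$ with a leg of length one. Their stated hypotheses for $k\ge4$ carry no length condition, so they apply as quoted.
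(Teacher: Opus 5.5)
Your proposal is correct and follows the paper's proof essentially verbatim. For (i) you use the same case split on the position of $w$ on $P_{ab}$, via Corollary~\ref{cor:branch}(iii), Lemmas~\ref{lem:bodyend}, \ref{lem:oneleg}, \ref{lem:B2} and \ref{lem:B2short}(iii); for (ii) you make the same elimination of every case except \eqref{eq:B2} with $m\ge4$, followed by $w\notin S$ from Lemma~\ref{lem:B2short}(i). Your count showing that exactly $\lceil \ell_k/2\rceil+1$ vertices of leg $i_1$ lie in $S$ (with $a\in S$ by definition), and your remark that \eqref{eq:B2} holds only up to reversal, just make explicit what the paper leaves implicit.
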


\begin{proof}
The lower bound in (i) is Corollary~\ref{cor:spiderlower}. For the upper bound,
argue as in the proof of Theorem~\ref{thm:spiderlong}: the position
$3\le r\le m-2$ of $w$ is impossible, $w\in\{p_1,p_m\}$ gives $|S|\le m\le2$ by
Lemma~\ref{lem:bodyend}, and $w\notin V(P_{ab})$ gives
$|S|\le m\le\lceil \ell_k/2\rceil+1$ by Lemma~\ref{lem:oneleg}. In the
remaining case $w\in\{p_2,p_{m-1}\}$ we may assume \eqref{eq:B2} after reversing
the labeling. If $m\le3$ then $|S|\le3\le\lceil \ell_k/2\rceil+2$; and if
$m\ge4$ then $\ell_{i_0}=1$ by Lemma~\ref{lem:B2}, and
Lemma~\ref{lem:B2short}(iii) gives
$|S|\le\lceil \ell_{i_1}/2\rceil+2\le\lceil \ell_k/2\rceil+2$.

For (ii), note that $\ell_k\ge3$ makes $\lceil \ell_k/2\rceil+2\ge4$, so none of
the first three cases above, nor the sub-case $m\le3$ of the fourth, can produce
equality. Hence \eqref{eq:B2} holds with $m\ge4$ and $\ell_{i_0}=1$, and
$w\notin S$ by Lemma~\ref{lem:B2short}(i), so $S\setminus\{a\}$ is contained in
the leg $i_1$.
\end{proof}

\subsection{Spiders with a leg of length one and \texorpdfstring{$k\ge4$}{k>=4}}
\label{sec:spexc}

By Proposition~\ref{prop:spiderk4}, deciding $\ldc(G)$ for $k\ge4$ and
$\ell_1=1$ amounts to deciding when a pendant leaf and
$\lceil \ell_k/2\rceil+1$ vertices of a single leg can be made consistent
simultaneously. Lemma~\ref{lem:leafconsistent} tells us exactly when the
natural labeling achieves this, and we now translate its condition into an
explicit list. Recall that we index the legs so that
$1=\ell_1\le\ell_2\le\dots\le\ell_k$.

\begin{theorem}
\label{thm:spiderexc}
Let $G=SP(w;1,\ell_2,\dots,\ell_k)$ with $k\ge4$ and $\ell_k\ge2$. If
\begin{equation}
\begin{aligned}
  &k=4 \ \text{ and either } \ \ell_2\le2 \ \text{ or } \ (\ell_2=3
     \text{ and } \ell_4\le4),\\
  \text{or }\quad
  &k=5 \ \text{ and either } \ \ell_3=1 \ \text{ or } \ \ell_5=2,\\
  \text{or }\quad
  &k\ge6, \ \ell_2=1 \ \text{ and } \ \ell_k=2,
\end{aligned}
  \label{eq:exclist}
\end{equation}
then $\ldc(G)=\lceil \ell_k/2\rceil+2$.
\end{theorem}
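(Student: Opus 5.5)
The upper bound $\ldc(G)\le\lceil \ell_k/2\rceil+2$ is Proposition~\ref{prop:spiderk4}(i), so only the lower bound needs proof. For that I will exhibit, in each case of \eqref{eq:exclist}, a labeling with $\lceil \ell_k/2\rceil+2$ consistent vertices.

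\emph{The general construction.} Take $i_0=1$, which is a leg with $\ell_{1}=1$. Choose a leg $i_1\ne 1$ with $\ell_{i_1}\ge2$ and $\lceil \ell_{i_1}/2\rceil=\lceil \ell_k/2\rceil$. Use the labeling $c=c_{i_1}^{\,1}$. Since $k\ge4$, Lemma~\ref{lem:sigma} gives $\sigma_{i_1}=1$. Part (i) of that lemma then shows that the leg $i_1$ contains
\[
  \min\{\ell_{i_1},\lfloor(\ell_{i_1}+1)/2\rfloor+1\}=\lceil \ell_{i_1}/2\rceil+1
\]
consistent vertices; the equality uses $\ell_{i_1}\ge2$. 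By Lemma~\ref{lem:leafconsistent}, the leaf $v_{1,1}$ is also consistent provided \eqref{eq:leafcond} holds, that is, $\Ntwo_t\le t+2$ for $1\le t\le\ell_{i_1}-1$, where $\Ntwo_t$ sums $\min\{\ell_i,t\}$ over $i\notin\{1,i_1\}$. These two contributions together give $\lceil\ell_k/2\rceil+2$ consistent vertices. So for each case the task is to choose $i_1$ so that \eqref{eq:leafcond} can be verified. Choosing $i_1$ as short as possible helps, because it shortens the range of $t$.

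\emph{Case $k=4$.} If $\ell_2\le2$, take $i_1=4$. Then $\Ntwo_t=\min\{\ell_2,t\}+\min\{\ell_3,t\}\le 2+t$ for every $t$. If $\ell_2=3$ and $\ell_4\le4$, then $\ell_4\in\{3,4\}$, and in both cases $\lceil\ell_4/2\rceil=2=\lceil 3/2\rceil$. So I take $i_1=2$, which has $\ell_{2}=3$. The range is $t\le2$, and $\Ntwo_t=\min\{\ell_3,t\}+\min\{\ell_4,t\}=2t\le t+2$ there. The shorter leg is genuinely needed: with $i_1=4$ and $\ell_4=4$ the condition fails at $t=3$.

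\emph{Case $k=5$.} If $\ell_3=1$, take $i_1=5$. Then $\Ntwo_t=1+1+\min\{\ell_4,t\}\le t+2$. If $\ell_5=2$, again take $i_1=5$. The only relevant value is $t=1$, where $\Ntwo_1=k-2=3=1+2$.

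\emph{Case $k\ge6$, $\ell_2=1$, $\ell_k=2$.} Here the target value is $\lceil 2/2\rceil+2=3$. The natural labeling fails, because $\Ntwo_1=k-2>3$. Instead I use Lemma~\ref{lem:twoleaves}: the body $w$ is adjacent to the two leaves $v_{1,1}$ and $v_{2,1}$, so $\ldc(G)\ge3$.

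The main point of care is matching each branch of \eqref{eq:exclist} to a leg $i_1$, or to the alternative argument, for which \eqref{eq:leafcond} or the substitute actually holds. The delicate instances are the subcase $\ell_2=3$, $\ell_4=4$ and the family $k\ge6$. Beyond that, the verification is routine arithmetic on $\Ntwo_t$.
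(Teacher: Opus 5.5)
Your proof is correct and follows the paper's argument: the same labeling $c_{i_1}^{\,1}$, with the same choices of $i_1$ in the cases $k=4$ and $k=5$. For $k\ge6$ the paper takes $i_1$ to be a second leg of length one, which makes \eqref{eq:leafcond} vacuous (so the natural labeling does not actually fail there; only your restriction $\ell_{i_1}\ge2$ does), and that labeling is exactly the one from Lemma~\ref{lem:twoleaves} that you invoke.
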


\begin{proof}
By Proposition~\ref{prop:spiderk4}(i) it suffices to exhibit a labeling with
$\lceil \ell_k/2\rceil+2$ consistent vertices. Take $i_0=1$ and choose a leg
$i_1\ne1$ with $\lceil \ell_{i_1}/2\rceil=\lceil \ell_k/2\rceil$; we use
$c=c_{i_1}^{\,i_0}$. By Lemma~\ref{lem:sigma} we have $\sigma_{i_1}=1$ because
$k\ge4$, so by Lemma~\ref{lem:sigma}(iii) the labeling $c$ makes
$\lceil \ell_{i_1}/2\rceil+1=\lceil \ell_k/2\rceil+1$ vertices among the leg
$i_1$ and the body $w$ consistent; by Lemma~\ref{lem:leafconsistent} it makes
$v_{1,1}$ consistent as well provided \eqref{eq:leafcond} holds. It therefore
remains to check that, for each tuple in \eqref{eq:exclist}, the leg $i_1$ can be
chosen so that \eqref{eq:leafcond} holds. We write $p\le q\le\cdots$ for the
lengths of the legs other than $i_0$ and $i_1$, so that $\Ntwo_t$ is the sum of the
$\min\{\cdot,t\}$ over these $k-2$ numbers.

\smallskip
\noindent\emph{The case $\ell_{i_1}=1$.} The range in \eqref{eq:leafcond} is
empty and the condition holds vacuously. This requires
$\lceil \ell_k/2\rceil=1$, that is $\ell_k=2$, and a second leg of length $1$,
that is $\ell_2=1$; this is the third line of \eqref{eq:exclist}, and also
covers the sub-cases $\ell_2=1,\ \ell_k=2$ of the first two lines. Here the
count $\lceil \ell_{i_1}/2\rceil+1=2$ of Lemma~\ref{lem:sigma}(iii) consists of
$v_{i_1,1}$ and $w$, the body being consistent because
$\ell_{i_1}=1\le1=\sigma_{i_1}$.

\smallskip
\noindent Now let $\ell_{i_1}\ge2$, so that $t=1$ lies in the range and
\eqref{eq:leafcond} forces $\Ntwo_1=k-2\le3$, that is $k\le5$.

\smallskip
\noindent\emph{The case $k=4$.} Here $\Ntwo_t=\min\{p,t\}+\min\{q,t\}$. The
condition at $t=1$ and $t=2$ is automatic, at $t=3$ it reads
$\min\{p,3\}+\min\{q,3\}\le5$, that is $p\le2$, and for $t\ge3$ with $p\le2$ it
holds because $p+\min\{q,t\}\le2+t$. Hence \eqref{eq:leafcond} is equivalent to
\begin{equation}
  \ell_{i_1}\le3\quad\text{or}\quad p\le2 .
  \label{eq:k4cond}
\end{equation}
If $\ell_2\le2$, take $i_1=4$; then $p=\ell_2\le2$ and \eqref{eq:k4cond} holds.
If $\ell_2=3$ and $\ell_4\le4$, take $i_1=2$; then
$\lceil \ell_2/2\rceil=2=\lceil \ell_4/2\rceil$ and $\ell_{i_1}=3$, so
\eqref{eq:k4cond} holds.

\smallskip
\noindent\emph{The case $k=5$.} Here $\Ntwo_t$ is a sum of three terms
$\min\{p,t\}$, $\min\{q,t\}$, $\min\{r,t\}$ with $p\le q\le r$. The condition
at $t=1$ is automatic; at $t=2$ it reads
$\min\{p,2\}+\min\{q,2\}+\min\{r,2\}\le4$, that is $p=q=1$; and for $t\ge2$ with
$p=q=1$ it holds because $2+\min\{r,t\}\le t+2$. Hence \eqref{eq:leafcond} is
equivalent to
\begin{equation}
  \ell_{i_1}\le2\quad\text{or}\quad p=q=1 .
  \label{eq:k5cond}
\end{equation}
If $\ell_3=1$, take $i_1=5$; then $p=\ell_2=1$ and $q=\ell_3=1$. If
$\ell_5=2$, take $i_1=5$; then $\ell_{i_1}=2$.
\end{proof}

Conversely, the consistency of the pendant leaf imposes a condition of the same
shape on every optimal labeling, not only on the natural one.

\begin{proposition}
\label{prop:excnecessary}
Let $G=SP(w;1,\ell_2,\dots,\ell_k)$ with $k\ge4$ and $\ell_k\ge2$, and suppose
$\ldc(G)=\lceil \ell_k/2\rceil+2$. Let $c$ be an optimal labeling and adopt the
notation of \eqref{eq:B2}, so that $a=v_{i_0,1}$ with $\ell_{i_0}=1$ and
$S\setminus\{a\}$ lies on the leg $i_1$. Then, with $\Ntwo_t$ as in
\eqref{eq:Mt},
\[
  \Ntwo_t\le t+2\qquad\text{for every } t \text{ with } 1\le t\le m-3 .
\]
In particular $k\le5$ whenever $m\ge4$.
\end{proposition}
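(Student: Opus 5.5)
The inequality should follow directly from the ball-growth bound of Corollary~\ref{cor:ballgrowth} applied at the end $a=v_{i_0,1}$ of $P_{ab}$. No new consistency argument is needed beyond what Theorem~\ref{thm:path} already encodes. I take the configuration \eqref{eq:B2} as given by hypothesis; it is supplied by Proposition~\ref{prop:spiderk4}(ii) together with Lemma~\ref{lem:B2}. So $a=p_1$ is the pendant leaf, $w=p_2$, and $p_r=v_{i_1,r-2}$ for $3\le r\le m$. If $m\le3$ the range $1\le t\le m-3$ is empty and there is nothing to prove, so I assume $m\ge4$.

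\textbf{Step 1: compute the ball about $a$.} Fix $r$ with $2\le r\le m-2$. Since $a$ is a leaf whose only neighbor is $w$, we have $\Ball{a}{r}=\{a\}\cup\Ball{w}{r-1}$. This ball consists of four parts: the vertex $a$; the body $w$; the vertices $v_{i_1,1},\dots,v_{i_1,r-1}$ of the leg $i_1$; and the vertices of the legs other than $i_0$ and $i_1$ at distance at most $r-1$ from $w$. There are exactly $\Ntwo_{r-1}$ vertices in the last part. The leg $i_1$ really contains $r-1$ such vertices, because $r-1\le m-3<m-2\le\ell_{i_1}$, the last inequality holding since $P_{ab}$ uses $m-2$ vertices of that leg. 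The leg $i_0$ contributes only $a$. Hence
\[
  |\Ball{a}{r}| = 2+(r-1)+\Ntwo_{r-1} = r+1+\Ntwo_{r-1}.
\]

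\textbf{Step 2: apply Corollary~\ref{cor:ballgrowth}.} Since $1\le r\le m-2$, Corollary~\ref{cor:ballgrowth} gives $|\Ball{a}{r}|\le 2r+2$. Therefore $\Ntwo_{r-1}\le r+1$. Substituting $t=r-1$, which ranges over $1\le t\le m-3$, yields $\Ntwo_t\le t+2$, as claimed. For the final assertion, $m\ge4$ allows $t=1$, and $\Ntwo_1=k-2$ then gives $k\le5$.

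The only point needing care is Step~1: the ball must be counted exactly, including the check that the leg $i_1$ is long enough to contribute $r-1$ vertices, which uses $m-2\le\ell_{i_1}$. Everything else is an immediate substitution.
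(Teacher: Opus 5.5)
Your proof is correct and is essentially the paper's argument. The paper re-derives inline, via Lemma~\ref{lem:ball} at radius $r$ with the witness $p_{r+2}$ and Theorem~\ref{thm:path}(iii), exactly the confinement of the $\Ntwo_{r-1}$ off-path vertices to the labels $[\,s-r-1,\,s-1\,]$ that underlies Corollary~\ref{cor:ballgrowth}. You instead cite that corollary and count $|\Ball{a}{r}|=r+1+\Ntwo_{r-1}$, which gives the same inequality $\Ntwo_{r-1}\le r+1$.
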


\begin{proof}
Let $1\le r\le m-2$. Since $\ell_{i_0}=1$ we have
$\Ball{a}{r}=\{a\}\cup\Ball{w}{r-1}$. The vertex $v_{i_1,r}=p_{r+2}$ lies on
$P_{ab}$, satisfies $d(a,v_{i_1,r})=r+1$ and, by \eqref{eq:B2labels},
$c(a,v_{i_1,r})=r+1$; thus $v_{i_1,r}\notin\Ball{a}{r}$. Lemma~\ref{lem:ball}
applied to $a$ with radius $r$ therefore gives $c(x)\in[\,s-r-1,\ s+r+1\,]$ for
every $x\in\Ball{a}{r}$.

Now let $x$ be one of the $\Ntwo_{r-1}$ vertices of $\Ball{a}{r}$ lying on a leg
other than $i_0$ and $i_1$. Then $x\notin V(P_{ab})$, so $c(x)<s$ or
$c(x)>s+m-1$ by Theorem~\ref{thm:path}(iii); the latter is impossible because
$c(x)\le s+r+1\le s+m-1$. Hence all these $\Ntwo_{r-1}$ vertices carry distinct
labels in the interval $[\,s-r-1,\ s-1\,]$, which contains $r+1$ integers, so
$\Ntwo_{r-1}\le r+1$. Setting $t=r-1$ gives the assertion, and $t=1$ gives
$k-2=\Ntwo_1\le3$.
\end{proof}

\begin{lemma}
\label{lem:excrange}
Let $G=SP(w;1,\ell_2,\dots,\ell_k)$ with $k\ge4$ and $\ell_k\ge3$, suppose
$\ldc(G)=\lceil \ell_k/2\rceil+2$, let $c$ be an optimal labeling and adopt the
notation of \eqref{eq:B2}. Then
\begin{enumerate}
\item[\textup{(i)}] $\lceil \ell_{i_1}/2\rceil=\lceil \ell_k/2\rceil$ and
$m\in\{\ell_{i_1}+1,\ \ell_{i_1}+2\}$;
\item[\textup{(ii)}] $\Ntwo_t\le t+2$ for every $t$ with $1\le t\le \ell_{i_1}-1$.
\end{enumerate}
\end{lemma}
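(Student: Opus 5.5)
By Proposition~\ref{prop:spiderk4}(ii), since $\ell_k\ge3$, every optimal labeling $c$ satisfies \eqref{eq:B2} with $\ell_{i_0}=1$ and $m\ge4$. Moreover $S_c(G)$ consists of $a=v_{i_0,1}$ together with $\lceil \ell_k/2\rceil+1$ vertices of the leg $i_1$. I will get (i) from counting arguments and (ii) from Proposition~\ref{prop:excnecessary}, so the task reduces to showing that $m$ is large compared with $\ell_{i_1}$.

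\emph{Step 1 (the first half of (i)).} Lemma~\ref{lem:B2short}(iii) gives $\lceil \ell_k/2\rceil+2=|S|\le\lceil m/2\rceil+1\le\lceil \ell_{i_1}/2\rceil+2$. Hence $\lceil \ell_{i_1}/2\rceil\ge\lceil \ell_k/2\rceil$. The reverse inequality holds because $\ell_{i_1}\le\ell_k$, so the two are equal.

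\emph{Step 2 (the range of $m$).} The vertices of $P_{ab}$ on leg $i_1$ are $v_{i_1,1},\dots,v_{i_1,m-2}$, so $m\le\ell_{i_1}+2$. From Step 1 we also get $\lceil m/2\rceil\ge\lceil \ell_{i_1}/2\rceil+1$. This only yields $m\ge\ell_{i_1}+1$ when $\ell_{i_1}$ is even. When $\ell_{i_1}$ is odd it allows $m=\ell_{i_1}$, and that case must be excluded separately. I expect this to be the main obstacle.

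My plan for that case is a sharper count. Lemma~\ref{lem:B2short}(ii) says the consistent vertices $v_{i_1,j}$ satisfy $j\ge\lceil(m-3)/2\rceil$ and $j\le m-2$. For $m=\ell_{i_1}$ odd, this gives at most $(m+1)/2$ of them, so $|S|\le(m+3)/2=(\ell_{i_1}+3)/2=\lceil \ell_{i_1}/2\rceil+1$. This is one less than required, a contradiction. The same computation should be carried out uniformly: writing $j_0=\lceil(m-3)/2\rceil$, the count $m-2-j_0+1$ is $\le\lfloor m/2\rfloor+1$ after checking parities. Any $m\le\ell_{i_1}$ then gives $|S|\le\lfloor m/2\rfloor+2\le\lceil\ell_{i_1}/2\rceil+1$, impossible. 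This shows $m\in\{\ell_{i_1}+1,\ell_{i_1}+2\}$. I will double-check the parity arithmetic carefully, since the bound $\lceil m/2\rceil$ in Lemma~\ref{lem:B2short}(iii) is stated loosely.

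\emph{Step 3 (part (ii)).} Proposition~\ref{prop:excnecessary} gives $\Ntwo_t\le t+2$ for $1\le t\le m-3$. If $m=\ell_{i_1}+2$, this range is exactly $1\le t\le\ell_{i_1}-1$. If $m=\ell_{i_1}+1$, only the value $t=\ell_{i_1}-1$ is missing. For that $t$ I will rerun the argument of Proposition~\ref{prop:excnecessary} with radius $r=m-1=\ell_{i_1}$. The leg $i_1$ is then exhausted, but its far vertex $v_{i_1,\ell_{i_1}}=b=p_m$ still has label $s+m-1$. The replacement witness outside $\Ball{a}{r}$ should be a vertex at distance $r+1$, or the labels of the leg $i_1$ together with the interval constraint. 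Failing that, I will use $\Ntwo_{t}\le\Ntwo_{t-1}+(k-2)$ together with $k\le5$ to close the single missing case. This endpoint case is the secondary obstacle.
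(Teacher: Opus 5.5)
Your part (i) is the paper's argument and is correct. The ``main obstacle'' you flag does not exist, though. For $\ell_{i_1}=2q+1$, the inequality $\lceil m/2\rceil\ge\lceil\ell_{i_1}/2\rceil+1=q+2$ already forces $m\ge 2q+3=\ell_{i_1}+2$, so $m=\ell_{i_1}$ is excluded. Together with $m\le\ell_{i_1}+2$ this gives $m\in\{\ell_{i_1}+1,\ell_{i_1}+2\}$ directly. Your uniform recount is in fact wrong, since $\lfloor m/2\rfloor+2\le\lceil\ell_{i_1}/2\rceil+1$ fails for $m=\ell_{i_1}$ even, but you do not need it.

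\textbf{The gap: part (ii), case $m=\ell_{i_1}+1$, $t=m-2$.} Your proposal leaves this case unproved.
\begin{itemize}
\item The geometry is misdescribed. Here $b=p_m=v_{i_1,m-2}=v_{i_1,\ell_{i_1}-1}$, not $v_{i_1,\ell_{i_1}}$. The leaf $z=v_{i_1,m-1}$ lies \emph{off} $P_{ab}$, at distance $m$ from $a$, so Theorem~\ref{thm:path}(ii) says nothing about its label. This is exactly why the witness $p_{r+2}$ of Proposition~\ref{prop:excnecessary} disappears at radius $r=m-1$.
\item Your fallback does not close the case. The bound $\Ntwo_t\le\Ntwo_{t-1}+(k-2)\le t+k-1$ gives only $t+3$ or $t+4$.
\item The case cannot be avoided. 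Take $k=4$, $\ell_{i_1}=4$, $m=5$, with both remaining legs of length at least $3$ (as in $SP(w;1,4,4,4)$). Proposition~\ref{prop:excnecessary} covers only $t\le2$, where the condition is automatic, while $\Ntwo_3=6>5$. So the entire content of (ii) in that instance is the missing value $t=3$.
\end{itemize}

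\textbf{The missing idea: control $c(z)$ through the consistency of $b$.}
\begin{enumerate}
\item Corollary~\ref{cor:ballgrowth} at $b$ with $t=1$ gives $c(z)\in\{s+m,s+m+1\}$.
\item Show that some vertex outside $\Ball{a}{m-1}$ has label distance exactly $m$ from $a$. If $c(z)=s+m$, this vertex is $z$. Otherwise take $y=c^{-1}(s+m)$, which is off $P_{ab}$. Let $X$ be the set of vertices of the other legs within distance $m-2$ of $w$. Then $y\notin X$: otherwise $d(b,y)\ge m-1>1=d(b,z)$, and consistency of $b$ would give $2=c(b,z)\le c(b,y)=1$.
\item Lemma~\ref{lem:ball} at radius $m-1$ then puts $c(X)$ inside $[s-m,s+m]$.
\item The block $[s,s+m-1]$ is used by $P_{ab}$, and $s+m$ is the label of $z$ or $y$. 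Hence $c(X)\subseteq[s-m,s-1]$, and $\Ntwo_{m-2}=|X|\le m=(m-2)+2$.
\end{enumerate}
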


\begin{proof}
(i) By Proposition~\ref{prop:spiderk4}(ii) we have $w=p_2$, $\ell_{i_0}=1$ and
$m\ge4$. Lemma~\ref{lem:B2short}(iii) gives $|S|\le\lceil m/2\rceil+1$, and
$m-2\le \ell_{i_1}$ gives $\lceil m/2\rceil\le\lceil \ell_{i_1}/2\rceil+1$;
hence
\[
  \bigl\lceil \ell_k/2\bigr\rceil+2=|S|\le\bigl\lceil m/2\bigr\rceil+1
  \le\bigl\lceil \ell_{i_1}/2\bigr\rceil+2 ,
\]
so $\lceil \ell_{i_1}/2\rceil\ge\lceil \ell_k/2\rceil$, and the reverse
inequality holds because $\ell_{i_1}\le \ell_k$. Both displayed inequalities
are therefore equalities; in particular
$\lceil m/2\rceil=\lceil \ell_{i_1}/2\rceil+1$, which together with
$m\le \ell_{i_1}+2$ leaves only $m\in\{\ell_{i_1}+1,\ell_{i_1}+2\}$.

\smallskip
\noindent(ii) If $m=\ell_{i_1}+2$ then $\ell_{i_1}-1=m-3$ and the assertion is
Proposition~\ref{prop:excnecessary}. So let $m=\ell_{i_1}+1$. Then
Proposition~\ref{prop:excnecessary} covers $1\le t\le m-3=\ell_{i_1}-2$, and it
remains to treat $t=m-2$.

Since $\ell_{i_1}=m-1$, the vertex $z=v_{i_1,m-1}$ exists; it is the leaf of the
leg $i_1$, it is adjacent to $b=p_m=v_{i_1,m-2}$, and it does not lie on
$P_{ab}$. Applying Corollary~\ref{cor:ballgrowth} to $b$ with $t=1$ gives
$\Ball{b}{1}\subseteq\{p_m,p_{m-1}\}\cup c^{-1}([\,c(b)+1,c(b)+2\,])$, and since
$\Ball{b}{1}=\{b,p_{m-1},z\}$ we conclude
\[
  c(z)\in\{\,s+m,\ s+m+1\,\}.
\]

Let $X$ be the set of vertices lying on legs other than $i_0$ and $i_1$ at
distance at most $m-2$ from $w$, so that $|X|=\Ntwo_{m-2}$ and, since
$\ell_{i_0}=1$,
\[
  \Ball{a}{m-1}=\{a\}\cup\Ball{w}{m-2}
  =\{a,w\}\cup\{v_{i_1,j}:1\le j\le m-2\}\cup X ,
\]
all members of which except those of $X$ lie on $P_{ab}$. Note also
$d(a,z)=1+(m-1)=m$, so $z\notin\Ball{a}{m-1}$ and $c(a,z)=c(z)-s$.

We claim that some vertex outside $\Ball{a}{m-1}$ has label distance exactly $m$
from $a$. If $c(z)=s+m$ this is $z$ itself. Otherwise $c(z)=s+m+1$, so
$s+m+1=c(z)\le n$ and the vertex $y=c^{-1}(s+m)$ exists. As
$c(V(P_{ab}))=[\,s,s+m-1\,]$ we have $y\notin V(P_{ab})$, and $y\ne z$.
Moreover $y\notin X$: otherwise $d(b,y)=(m-2)+d(w,y)\ge m-1\ge2>1=d(b,z)$, so
consistency of $b$ would force $c(b,z)\le c(b,y)$, that is $2\le1$. Hence
$y\notin\Ball{a}{m-1}$ and $c(a,y)=m$, proving the claim.

Lemma~\ref{lem:ball} applied to $a$ with radius $m-1$ now gives $c(a,x)\le m$
for every $x\in\Ball{a}{m-1}$, that is $c(x)\in[\,s-m,\ s+m\,]$. Let $x\in X$.
Then $x\notin V(P_{ab})$, so $c(x)<s$ or $c(x)>s+m-1$ by
Theorem~\ref{thm:path}(iii); the latter would force $c(x)=s+m$, which is the
label of $z$ or of $y$, neither of which belongs to $X$. Hence every $x\in X$
satisfies $c(x)\in[\,s-m,\ s-1\,]$, an interval containing $m$ integers, so
$\Ntwo_{m-2}=|X|\le m=(m-2)+2$ as required.
\end{proof}

We can now characterise the exceptional spiders.

\begin{theorem}
\label{thm:spiderfull}
Let $G=SP(w;1,\ell_2,\dots,\ell_k)$ with $k\ge4$ and $\ell_k\ge2$. Then
$\ldc(G)=\lceil \ell_k/2\rceil+2$ if \eqref{eq:exclist} holds, and
$\ldc(G)=\lceil \ell_k/2\rceil+1$ otherwise.
\end{theorem}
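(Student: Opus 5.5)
The plan is to handle the two directions separately. If \eqref{eq:exclist} holds, Theorem~\ref{thm:spiderexc} already gives $\ldc(G)=\lceil \ell_k/2\rceil+2$. Otherwise Proposition~\ref{prop:spiderk4}(i) confines $\ldc(G)$ to two values, so it suffices to assume $\ldc(G)=\lceil \ell_k/2\rceil+2$, take an optimal labeling $c$, and show that \eqref{eq:exclist} must hold. I split according to whether $\ell_k\ge3$ or $\ell_k=2$, because Lemma~\ref{lem:excrange} needs $\ell_k\ge3$.

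\emph{Case $\ell_k\ge3$.} Lemma~\ref{lem:excrange} gives a leg $i_1$ with $\lceil \ell_{i_1}/2\rceil=\lceil \ell_k/2\rceil\ge2$, hence $\ell_{i_1}\ge3$. It also gives $\Ntwo_t\le t+2$ for $1\le t\le \ell_{i_1}-1$. So the range contains $t=1$ and $t=2$.
\begin{enumerate}
\item[\textup{(a)}] At $t=1$ we get $k-2\le3$, so $k\in\{4,5\}$.
\item[\textup{(b)}] For $k=4$, let $p\le q$ be the lengths of the two legs other than $i_0,i_1$. The computation in the proof of Theorem~\ref{thm:spiderexc} shows the condition at $t=3$ (present when $\ell_{i_1}\ge4$) forces $p\le2$. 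So either $\ell_{i_1}=3$ or $p\le2$.
\item[\textup{(c)}] If $p\le2$, then $\ell_2\le2$. Indeed, $\ell_2$ is one of $p,q$ unless $i_1=2$, and in that case $\ell_2\le\ell_3=p\le2$.
\item[\textup{(d)}] If $\ell_{i_1}=3$, then $\lceil \ell_4/2\rceil=2$ gives $\ell_4\le4$. Together with $\ell_2\le\ell_{i_1}=3$, we get either $\ell_2\le2$ or ($\ell_2=3$ and $\ell_4\le4$).
\item[\textup{(e)}] For $k=5$, the condition at $t=2$ forces $p=q=1$, where $p\le q\le r$ are the three remaining legs. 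So at least three legs have length $1$, hence $\ell_3=1$.
\end{enumerate}
In every sub-case \eqref{eq:exclist} holds.

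\emph{Case $\ell_k=2$.} Here the target value is $3$. Checking \eqref{eq:exclist}, every spider with $k=4$ or $k=5$ qualifies, as does every $k\ge6$ with $\ell_2=1$. The only spiders to rule out are therefore $k\ge6$ with $\ell_1=1$ and $\ell_2=\dots=\ell_k=2$. For these I must show $|S|\le2$ for every labeling, by running through the positions of $w$ as in the proof of Proposition~\ref{prop:spiderk4}.
\begin{enumerate}
\item[\textup{(a)}] $w$ interior at position $3\le r\le m-2$ is impossible.
\item[\textup{(b)}] $w\in\{p_1,p_m\}$ gives $m\le2$ by Lemma~\ref{lem:bodyend}.
\item[\textup{(c)}] $w\notin V(P_{ab})$ confines $P_{ab}$ to a leg, so $m\le2$.
\item[\textup{(d)}] $w=p_2$ (after reversal) with $m\ge4$: then $m=4$ since $m-2\le\ell_{i_1}=2$. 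Proposition~\ref{prop:excnecessary} then forces $k\le5$, a contradiction.
\item[\textup{(e)}] $w=p_2$ with $m=3$: if both $\ell_{i_0},\ell_{i_1}\ge2$, Lemma~\ref{lem:B2} gives $|S|\le2$. Otherwise, after reversing if needed, $a$ is the unique pendant leaf and $b=v_{i_1,1}$ with $\ell_{i_1}=2$.
\end{enumerate}

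The main obstacle is sub-case (e) in the second direction, where I must show $w\notin S$ directly. Write $s=c(a)$.
\begin{enumerate}
\item[\textup{(1)}] Corollary~\ref{cor:ballgrowth} at $b$ with $t=1$ places $c(v_{i_1,2})\in\{s+3,s+4\}$.
\item[\textup{(2)}] This vertex lies outside $\Ball{w}{1}$ at label distance at most $3$ from $c(w)=s+1$.
\item[\textup{(3)}] If $w\in S$, Lemma~\ref{lem:ball} forces all $k-2\ge4$ vertices $v_{i'',1}$ with $i''\notin\{i_0,i_1\}$ to have labels within distance $3$ of $s+1$.
\item[\textup{(4)}] By Theorem~\ref{thm:path}(iii) these labels avoid $[s,s+2]$. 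The only candidates are $s-2,s-1,s+3,s+4$, one of which is taken by $v_{i_1,2}$, leaving at most three slots for at least four vertices.
\end{enumerate}
This contradiction gives $w\notin S$, so $|S|\le2$, which completes the proof.
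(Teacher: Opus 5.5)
Your proposal is correct and follows the paper's proof essentially step for step. It uses the same reduction via Theorem~\ref{thm:spiderexc} and Proposition~\ref{prop:spiderk4}(i), and the same use of Lemma~\ref{lem:excrange} when $\ell_k\ge3$; your observation that $\ell_{i_1}\ge3$ also makes the sub-case $\ell_{i_1}\le2$ of \eqref{eq:k5cond} vacuous. For $\ell_k=2$ it runs the same case analysis, ending in the three-slot count on $\Ball{w}{1}$. One step is left implicit: in sub-case (d) of the $\ell_k=2$ case you should cite Lemma~\ref{lem:B2} to obtain $\ell_{i_0}=1$ from $m\ge4$ before invoking Proposition~\ref{prop:excnecessary}.
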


\begin{proof}
Sufficiency is Theorem~\ref{thm:spiderexc}, and
Proposition~\ref{prop:spiderk4}(i) shows that no other value can occur, so it
suffices to prove that $\ldc(G)=\lceil \ell_k/2\rceil+2$ implies
\eqref{eq:exclist}. Let $c$ be an optimal labeling.

\smallskip
\noindent\emph{The case $\ell_k\ge3$.} Let $(i_0,i_1)$ be the pair arising from
$c$ as in \eqref{eq:B2}. If some leg other than $i_0$ has length one, replacing
$i_0$ by that leg leaves the multiset $\{\ell_i : i\notin\{i_0,i_1\}\}$
unchanged, so the quantity $\Ntwo_t$ of \eqref{eq:Mt} depends only on $i_1$. By
Lemma~\ref{lem:excrange} we have
$\lceil \ell_{i_1}/2\rceil=\lceil \ell_k/2\rceil\ge2$, so $\ell_{i_1}\ge2$ and
$t=1$ lies in the range of Lemma~\ref{lem:excrange}(ii); this gives
$k-2=\Ntwo_1\le3$, that is $k\le5$. Write $p\le q\le\cdots$ for the lengths of the
legs other than $i_0$ and $i_1$.

\smallskip
\noindent Let $k=4$. As shown in the proof of Theorem~\ref{thm:spiderexc},
Lemma~\ref{lem:excrange}(ii) is equivalent to \eqref{eq:k4cond}, that is to
$\ell_{i_1}\le3$ or $p\le2$. Assume $\ell_2\ge3$, since otherwise
\eqref{eq:exclist} already holds. The two legs other than $i_0$ and $i_1$ are
among the legs $2,3,4$, so $p\ge \ell_2\ge3$ and \eqref{eq:k4cond} forces
$\ell_{i_1}\le3$. As $\ell_{i_1}\ge \ell_2\ge3$ we get
$\ell_{i_1}=\ell_2=3$, and then $2=\lceil \ell_2/2\rceil=\lceil \ell_4/2\rceil$
gives $\ell_4\le4$. Thus $\ell_2=3$ and $\ell_4\le4$, which is
\eqref{eq:exclist}.

\smallskip
\noindent Let $k=5$. Similarly Lemma~\ref{lem:excrange}(ii) is equivalent to
\eqref{eq:k5cond}, that is to $\ell_{i_1}\le2$ or $p=q=1$. In the first case
$\lceil \ell_{i_1}/2\rceil=1=\lceil \ell_5/2\rceil$ gives $\ell_5=2$; in the
second case two of the three legs other than $i_0$ and $i_1$ have length one,
which together with $\ell_{i_0}=1$ gives three legs of length one, so
$\ell_3=1$. Either way \eqref{eq:exclist} holds.

\smallskip
\noindent\emph{The case $\ell_k=2$.} Here $\lceil \ell_k/2\rceil+2=3$, and
\eqref{eq:exclist} holds automatically for $k=4$, because $\ell_2\le \ell_k=2$,
and for $k=5$, because $\ell_5=2$. So let $k\ge6$ and suppose, for a
contradiction, that $\ell_2\ge2$; then every leg other than the first has length
exactly $2$, and the first leg is the unique leg of length one.

We have $|S|=3$, so $m\ge3$. By the proof of
Proposition~\ref{prop:spiderk4}(i) the cases $w\notin V(P_{ab})$ and
$w\in\{p_1,p_m\}$ give $|S|\le2$, so $w\in\{p_2,p_{m-1}\}$ and we may assume
\eqref{eq:B2}; moreover $\ell_{i_0}=1$ by Lemma~\ref{lem:B2}, so $i_0$ is the
first leg and $\ell_{i_1}=2$, whence $m\le \ell_{i_1}+2=4$.

If $m=4$, then Proposition~\ref{prop:excnecessary} applies with $t=1\le m-3$ and
gives $k-2=\Ntwo_1\le3$, contradicting $k\ge6$.

So $m=3$ and $S=\{a,w,b\}$ with $a=v_{i_0,1}$ and $b=v_{i_1,1}$. The vertex
$v_{i_1,2}$ exists, is adjacent to $b=p_3$ and lies off $P_{ab}$, so
Corollary~\ref{cor:ballgrowth} applied to $b$ with $t=1$ gives
$c(v_{i_1,2})\in\{s+3,s+4\}$; in particular $c(w,v_{i_1,2})\le3$ while
$v_{i_1,2}\notin\Ball{w}{1}$. Now
$\Ball{w}{1}=\{w,a,b\}\cup\{v_{i'',1}:i''\notin\{i_0,i_1\}\}$, and
Lemma~\ref{lem:ball} applied to $w$ with radius $1$ shows that each of the
$k-2$ vertices $v_{i'',1}$ has label distance at most $3$ from $c(w)=s+1$, hence
carries a label in
$[\,s-2,\,s+4\,]\setminus\{s,\,s+1,\,s+2,\,c(v_{i_1,2})\}$, a set of exactly
three integers. Therefore $k-2\le3$, contradicting $k\ge6$.
\end{proof}

Collecting Lemma~\ref{lem:sigma}, Theorem~\ref{thm:spiderlong},
Theorem~\ref{thm:spider3short} and Theorem~\ref{thm:spiderfull}, we obtain the
complete determination of $\ldc$ for spiders.

\begin{theorem}
\label{thm:spidersummary}
Let $G=SP(w;\ell_1,\dots,\ell_k)$ with $k\ge3$. Then
\[
  \ldc(G)=
  \begin{cases}
    3, & \text{if } \ell_k=1,\\[2pt]
    \ell_3+2, & \text{if } k=3 \text{ and } \ell_1=1,\\[2pt]
    \bigl\lfloor \ell_3/2\bigr\rfloor+2, & \text{if } k=3 \text{ and } \ell_1\ge2,\\[2pt]
    \bigl\lceil \ell_k/2\bigr\rceil+2, & \text{if } k\ge4,\ \ell_1=1
      \text{ and \eqref{eq:exclist} holds},\\[2pt]
    \bigl\lceil \ell_k/2\bigr\rceil+1, & \text{otherwise.}
  \end{cases}
\]
\end{theorem}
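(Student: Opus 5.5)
The proof is a matter of assembling earlier results, split into five cases that mirror the displayed formula. They are mutually exclusive and exhaustive once we note that $\ell_k=1$ forces $\ell_1=1$. I would therefore treat the star case first and then assume $\ell_k\ge2$ in the remaining ones.

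\emph{Case $\ell_k=1$.} Here $G=K_{1,k}$, and $\ldc(G)=3$ is \cite[Proposition 4.4]{CH2025}, as noted at the start of this section.

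\emph{Case $k=3$, $\ell_1=1$.} This is Theorem~\ref{thm:spider3short}. It also covers $\ell_3=1$, which is consistent because $\ell_3+2=3$ agrees with the star value.

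\emph{Case $k=3$, $\ell_1\ge2$.} Then $\ell_k\ge2$, and Theorem~\ref{thm:spiderlong} gives $\lfloor\ell_3/2\rfloor+2$.

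\emph{Case $k\ge4$, $\ell_1\ge2$.} Again $\ell_k\ge2$, and Theorem~\ref{thm:spiderlong} gives $\lceil\ell_k/2\rceil+1$. This falls under ``otherwise'', because \eqref{eq:exclist} is only invoked when $\ell_1=1$.

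\emph{Case $k\ge4$, $\ell_1=1$, $\ell_k\ge2$.} Theorem~\ref{thm:spiderfull} gives $\lceil\ell_k/2\rceil+2$ exactly when \eqref{eq:exclist} holds, and $\lceil\ell_k/2\rceil+1$ otherwise. These are the fourth and fifth lines.

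The only real point of care is checking that the ``otherwise'' line collects exactly two situations: $k\ge4$ with $\ell_1\ge2$, and $k\ge4$ with $\ell_1=1$ where \eqref{eq:exclist} fails, in both cases with $\ell_k\ge2$. No other configuration should fall into it. In particular the stars with $k\ge4$ must be caught by the first line before the others are applied, since $\lceil 1/2\rceil+2=3$ coincides with the star value anyway. Lemma~\ref{lem:sigma} enters only through these theorems, and I do not expect any genuine obstacle beyond this bookkeeping.
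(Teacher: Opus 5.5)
Your proposal is correct and matches the paper. The paper likewise obtains the theorem by collecting the star case from \cite{CH2025}, Theorem~\ref{thm:spiderlong}, Theorem~\ref{thm:spider3short} and Theorem~\ref{thm:spiderfull}.

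One small correction to your side remark on stars. For $k\ge6$, condition \eqref{eq:exclist} fails because $\ell_k=1\ne2$, so the fourth line does not apply and the \emph{otherwise} line would give $2$. It is the precedence of the first line, not a coincidence of values, that handles these stars; the coincidence $\lceil 1/2\rceil+2=3$ occurs only for $k=4,5$.
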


%% file: conclusion.tex
\section{Concluding remarks}
\label{sec:conclusion}

We proved that, for every labeling of a tree, all list-distance consistent vertices lie on a single path,
and used this structural restriction to determine the list-distance consistency number for complete $k$-ary trees
and for all spiders. The path theorem also gives a useful starting point for
computing this parameter on broader classes of trees. A systematic study of
the algorithmic complexity of determining $\ldc(T)$, including specialized
algorithms for caterpillars and related tree classes, is left for separate
work.

%% file: ai-declaration.tex
\clearpage
\section*{Declaration on the use of generative AI}

During the preparation of this manuscript, the authors used generative AI
tools, including Claude (Anthropic) and ChatGPT (OpenAI), for language editing,
literature searches, and assistance in writing and running computer programs
used to check the statements of several lemmas and theorems on small trees and
spiders. All mathematical definitions, statements, proofs, and conclusions
were formulated, reviewed, and verified by the authors, who take full
responsibility for the content of the manuscript.